\let\oldnl\nl
\newcommand{\nonl}{\renewcommand{\nl}{\let\nl\oldnl}}% Remove line number for one line
\newtheorem{theorem}{Theorem}%[section]
\newtheorem{theorem1}{Theorem}
\newtheorem{theorem3}{Theorem}
\newtheorem{theorem5}{Theorem}
\newtheorem{lemma}[theorem1]{Lemma}
\newtheorem{assumption}[theorem3]{Assumption}
\theoremstyle{definition}
\newtheorem{remark}[theorem5]{Remark}
\newenvironment{fminipage}%
{\begin{Sbox}\begin{minipage}}%
		{\end{minipage}\end{Sbox}\fbox{\TheSbox}}
\def\eq#1{\begin{equation*}\begin{split}#1\end{split}\end{equation*}}
\def\eql#1#2{\begin{equation}{#1}\begin{split}#2\end{split}\end{equation}}
\def\subeql#1#2{\begin{equation}{#1}\left\{\begin{split}#2\end{split}\right.\end{equation}}
\def\subeqnum#1{\begin{subequations}\begin{numcases}{}#1\end{numcases}\end{subequations}}
\def\subeqnuml#1#2{\begin{subequations}{#1}\begin{numcases}{}#2\end{numcases}\end{subequations}}
\def\comeq#1{\stackrel{\mathrm{#1}}{=}}
\def\comleq#1{\stackrel{\mathrm{#1}}{\leq}}
\def\expec#1{{\mathbb{E}}\left[ #1 \right]}
\newcommand{\E}{\mbox{{\bf E}}}
\def\defeq{\stackrel{\mathrm{def}}{=}}
\def\pr#1{\left( #1 \right ) }
\def\br#1{\left[ #1 \right ] }
\def\dr#1{\left\{#1\right\}}
\def\jr#1{\left< #1 \right>  }
\def\floor#1{\left\lfloor #1 \right\rfloor}
\def\abs#1{\left\lvert #1 \right\rvert }
\def\calC{\mathcal{C}}
\def\calE{\mathcal{E}}
\def\calG{\mathcal{G}}
\def\calN{\mathcal{N}}
\def\calF{\mathcal{F}}
\newcommand\PPi{\boldsymbol{\Pi}}
\def\aa{\pmb{\mathit{a}}}
\newcommand\bb{\boldsymbol{\mathit{b}}}
\newcommand\dd{\boldsymbol{\mathit{d}}}
\renewcommand\gg{\boldsymbol{\mathit{g}}}
\newcommand\qq{\boldsymbol{\mathit{q}}}
\renewcommand\ss{\boldsymbol{\mathit{s}}}
\newcommand\uu{\boldsymbol{\mathit{u}}}
\newcommand\vv{\boldsymbol{\mathit{v}}}
\newcommand\yy{\boldsymbol{\mathit{y}}}
\newcommand\zz{\boldsymbol{\mathit{z}}}
\newcommand\xx{\boldsymbol{\mathit{x}}}
\renewcommand\AA{\boldsymbol{\mathit{A}}}
\newcommand\BB{\boldsymbol{\mathit{B}}}
\newcommand\CC{\boldsymbol{\mathit{C}}}
\newcommand\DD{\boldsymbol{\mathit{D}}}
\newcommand\FF{\boldsymbol{\mathit{F}}}
\newcommand\GG{\boldsymbol{\mathit{G}}}
\newcommand\II{\boldsymbol{\mathit{I}}}
\newcommand\MM{\boldsymbol{\mathit{M}}}
\newcommand\QQ{\boldsymbol{\mathit{Q}}}
\renewcommand\SS{\boldsymbol{\mathit{S}}}
\newcommand\UU{\boldsymbol{\mathit{U}}}
\newcommand\WW{\boldsymbol{\mathit{W}}}
\newcommand\XX{\boldsymbol{\mathit{X}}}
\newcommand\YY{\boldsymbol{\mathit{Y}}}
\newcommand\ZZ{\boldsymbol{\mathit{Z}}}
\def\nm#1{\| #1 \|  }
\def\nt#1{\left\| #1 \right\|}
\def\nf#1{\left\|#1\right\|_{\rm F}}
\newcommand{\mm}[2]{{\left\vert\kern-0.25ex\left\vert\kern-0.25ex\left\vert #2
    \right\vert\kern-0.25ex\right\vert\kern-0.25ex\right\vert}_{#1}}
\def\mt#1{\nf{#1}}
\def\nb#1{\left\|#1\right\|_{2}}
\def\la{\lambda}
\def\tp{^\top}
\newcommand \eps{\epsilon}
\newcommand{\zero}{\mathbf{0}}
\newcommand{\one}{\mathbf{1}}
\newcommand \inv{^{-1}}
\def\MatSize#1#2{\mathbb{R}^{#1\times#2}}
\newcommand \Real{\mathbb{R}}
\newcommand\na{\nabla}
\newcommand*\samethanks[1][\value{footnote}]{\footnotemark[#1]}
\def\gF#1{\na \FF\pr{#1}}
\newcommand\alp{\alpha}
\newcommand\bet{\beta}
\newcommand\gam{\gamma}
\newcommand\xa{\overline{\xx}}
\newcommand\ya{\overline{\yy}}
\newcommand\za{\overline{\zz}}
\newcommand\ua{\overline{\uu}}
\newcommand\ga{\overline{\gg}}
\newcommand\qa{\overline{\qq}}
\newcommand\Con{\PPi}
\newcommand\gap{\theta}
\def\E#1{\expec{#1}}
\def\Ek#1{\mathbb{E}_k\br{#1  }}
\newcommand\Ly{\Phi_{\rm C}}
\newcommand\pa{c_1\pr{\theta, \gam, \tau, p, q, \eta}}
\newcommand\pb{c_2\pr{\theta, \gam, \tau, p, q, \eta}}
\newcommand\paa{c_1(\cgap, \gam, \tau, p, q, \eta)}
\newcommand\pbb{c_2(\cgap, \gam, \tau, p, q, \eta)}
\newcommand\pc{c_3\pr{\alp, \gam, \tau}}
\newcommand\pd{c_4}
\newcommand\pe{c_5}
\newcommand\da{\overline{\dd}}
\newcommand\LyU{\Phi_{\rm U}}
\newcommand\pra{\widehat{p}}
\newcommand\rate{\delta}
\newcommand\Lya{\Phi}
\newcommand\ka{\kappa}
\newcommand\Wt{\widetilde{\WW}}
\newcommand\ca{\#}
\newcommand\ew{\widetilde{\eta}_{\rm w}}
\newcommand\rw{\widetilde{\rho}_{\rm w}}
\newcommand\cgap{\widetilde{\gap}_{\rm w}}
\newcommand\acon{\widetilde{\Con}  }
\def\ub#1{\br{#1}_{1:n, :}}
\newcommand\ra{a^{(1)}}
\newcommand\rb{a^{(2)}}
\def\cgF#1{\gF{#1}_{\ca}}
\newcommand\Zt{\widetilde{\ZZ}}
\newcommand\Ut{\widetilde{\UU}}
\newcommand\Gt{\widetilde{\GG}}
\newcommand\CX{\XX_{\ca}}
\newcommand\CU{\UU_{\ca}}
\newcommand\CG{\GG_{\ca}}
\newcommand\cu{\calC_{\rm U}}
\newcommand\cg{\calC_{\rm G}}
\newcommand\cz{\calC_{\rm Z}}
\def\emt#1{\E{\mt{#1}^2}}
\newcommand\cLy{\widetilde{\Phi}_{\rm C}}
\newcommand\cLya{\widetilde{\Phi}}
\newcommand\zat{\widetilde{\zz}}
\newcommand\uat{\widetilde{\uu}}
\newcommand\gat{\widetilde{\gg}}
\newcommand\ztil{\widetilde{\zz}}
\newcommand\catype{\emph{``\ca"-type}}
\newcommand\ert{\eta_{\rm root}}
\newcommand\lCAL{\emph{``loopless Chebyshev acceleration lemma"}}
\newcommand\LCAL{\emph{``Loopless Chebyshev acceleration lemma"}}
\newcommand\lCA{\emph{Loopless Chebyshev Acceleration}}
\newcommand\etat{\widetilde{\eta}}
\newcommand\agt{\overline{\ss}}
\newcommand{\xmark}{\ding{55}}
\newcommand\SSGT{\textsc{SS-GT}}
\newcommand\OGT{\textsc{OGT}}
\newcommand\SSGTnm{``Snapshot" Gradient Tracking}
\newcommand\OGTnm{Optimal Gradient Tracking}
\def\que#1{\emph{\textbf{Question}~\ref{que:#1}}}
\title{Optimal Gradient Tracking for Decentralized Optimization\thanks{Most work was done while the first author was visiting School of Data Science, The Chinese University of Hong Kong, Shenzhen. }
}
\author[1]{Zhuoqing Song}
\author[2]{Lei Shi}
\author[3]{Shi Pu\thanks{Co-corresponding authors. }}
\author[3]{Ming Yan\samethanks}
\affil[1]{Shanghai Center for Mathematical Sciences, Fudan University, Shanghai, China}
\affil[2]{School of Mathematical Sciences, Fudan University, Shanghai, China}
\affil[3]{School of Data Science, The Chinese University
of Hong Kong, Shenzhen (CUHK-Shenzhen), China}
\affil[ ]{\textit {zqsong19@fudan.edu.cn,~leishi@fudan.edu.cn,~pushi@cuhk.edu.cn,~yanming@cuhk.edu.cn}}
\date{}
\begin{document}

\maketitle

\thispagestyle{empty}

\thispagestyle{firstpage}

\begin{abstract}
  In this paper, we focus on solving the decentralized optimization problem of minimizing the sum of $n$ objective functions over a multi-agent network. The agents are embedded in an undirected graph where they can only send/receive information directly to/from their immediate neighbors.
  Assuming smooth and strongly convex objective functions, we propose an \emph{Optimal Gradient Tracking} (\OGT) method that achieves the optimal gradient computation complexity $O\pr{\sqrt{\ka}\log\frac{1}{\eps}}$ and the optimal communication complexity $O\pr{\sqrt{\frac{\ka}{\gap}}\log\frac{1}{\eps}}$ simultaneously, where $\ka$ and $\frac{1}{\gap}$ denote the condition numbers related to the objective functions and the communication graph, respectively.
  To our best knowledge, $\OGT$ is the first single-loop decentralized gradient-type method that is optimal in both gradient computation and communication complexities.
  The development of $\OGT$ involves two building blocks that are also of independent interest.
  The first one is another new decentralized gradient tracking method termed \emph{\SSGTnm}\ (\SSGT), which achieves the gradient computation and communication complexities of $O\pr{\sqrt{\ka}\log\frac{1}{\eps}}$ and $O\pr{\frac{\sqrt{\ka}}{\gap}\log\frac{1}{\eps}}$, respectively. $\SSGT$  can be potentially extended to more general settings compared to \OGT.
  The second one is a technique termed \lCA\ (LCA), which can be implemented ``looplessly" but achieves a similar effect by adding multiple inner loops of Chebyshev acceleration in the algorithm. In addition to $\SSGT$,
  this LCA technique can accelerate many other gradient tracking based methods with respect to the graph condition number $\frac{1}{\gap}$.  

\end{abstract}

\section{Introduction}\label{sec:introduction1}
Consider a system of $n$ agents working collaboratively to solve the following optimization problem:
\eql{\label{eq:prob1}}{
    f\pr{\xx} = \frac{1}{n}\sum_{i=1}^n f_i\pr{\xx},
}
where $\xx\in \Real^d$ is the global decision variable, and
each $f_i\pr{\xx}$ is a smooth and strongly convex objective function accessible only to agent $i$.
The agents are connected over an undirected graph, in which they can only send/receive information directly to/from their immediate neighbors. Each agent makes its own decisions based on the decision variables and the gradients it has computed or received, that is, problem~\eqref{eq:prob1} needs to be solved in a decentralized manner under local communication and local gradient computation.

Decentralized optimization problems arise naturally in many real-world applications. For example, the data used for modern machine learning tasks are getting increasingly large, and they are usually collected or stored in a distributed fashion by a number of data centers, servers, mobile devices, etc. Centering large amounts of data is often impractical due to limited communication bandwidth and data privacy concerns~\cite{nedic2018distributed}.
Particular application scenarios of decentralized optimization  include
distributed machine learning~\cite{cohen2017projected,forrester2007multi,nedic2017fast}, wireless networks~\cite{baingana2014proximal,cohen2017distributed,mateos2012distributed}, information control~\cite{olshevsky2010efficient,ren2006consensus}, power system control~\cite{gan2012optimal,ram2009distributed}, among many others.

The study on decentralized optimization can be traced back to the early 1980s~\cite{bertsekas1983distributed,tsitsiklis1986distributed,tsitsiklis1984problems}.
Over the past decade, a large body of literature has appeared since the emergence of the distributed subgradient descent (DGD) method introduced in~\cite{nedic2009distributed}. DGD attains the optimal solution to problem~\eqref{eq:prob1} with diminishing stepsizes.
EXTRA~\cite{shi2015extra} is the first gradient-type method that achieves linear convergence for strongly convex and smooth objective functions by introducing an extra correction term to DGD.
%distributed ADMM based methods~\cite{shi2014linear,wei2012distributed} \sp{(These two are not gradient based.)}
Subsequently, distributed ADMM methods~\cite{chang2014multi,ling2015dlm}, NIDS~\cite{li2019decentralized}, and exact diffusion method~\cite{yuan2018exact} were also shown to exhibit linear convergence rates.
In particular, the convergence rates of NIDS~\cite{li2019decentralized} and EXTRA~\cite{li2020revisiting} separate the function condition number and the graph condition number and achieve the best complexities among non-accelerated methods so far \cite{xu2021distributed}.

Gradient tracking methods~\cite{di2015distributed,di2016next,nedic2017achieving,qu2017harnessing,xu2015augmented} represent another class of popular choices for decentralized optimization.
Such methods employ an auxiliary variable to track the average gradient over the entire network so that their performances become comparable to the centralized algorithms, which enjoy linear convergence.
The gradient tracking technique can be applied under uncoordinated stepsizes, directed, time-varying graphs, asynchronous settings, composite objective functions and nonconvex objective functions; see for instance~\cite{lu2021optimal,nedic2017achieving,pu2020push,saadatniaki2020decentralized,sun2019distributed,tian2020achieving,xi2017add,xin2018linear,xu2017convergence}, among many others.

In this paper, we focus on synchronous decentralized gradient-type methods for solving problem~\eqref{eq:prob1}. The algorithm efficiency is usually measured in two dimensions: (1) gradient computation complexity: the number of local gradients $\na f_i\pr{\cdot}$ that each agent $i$ needs to compute for achieving $\eps$-accuracy; (2) communication complexity: the number of communication rounds performed by each agent $i$ for achieving $\eps$-accuracy, with $O(1)$ vectors of length $O(d)$ allowed to be broadcasted to the neighbors in one communication round.
For $\mu$-strongly convex and $L$-smooth objective functions, the condition number is defined as $\ka = \frac{L}{\mu}$, which measures the ``ill-conditionedness" of the functions.
The graph condition number denoted by $\frac{1}{\gap}$ measures the ``connectivity" of the communication network, where the definition of $\gap$ is given in Section~\ref{sec:NotaAssp1}.
It was shown in~\cite{scaman2017optimal} that to obtain $\eps$-optimal solutions, the gradient computation complexity is lower bounded by $O\pr{\sqrt{\ka}\log\frac{1}{\eps}}$, and the communication complexity is lower bounded by $O\pr{\sqrt{\frac{\ka}{\gap}}\log\frac{1}{\eps}}$.

To obtain better complexities, many accelerated decentralized gradient-type methods have been developed%~\cite{dvinskikh2019decentralized,fallah2019robust,jakovetic2014fast,kovalev2020optimal,li2020decentralized,li2020revisiting,li2021accelerated,qu2019accelerated,scaman2017optimal,ye2020decentralized}
~(e.g.,~\cite{dvinskikh2021decentralized,fallah2019robust,jakovetic2014fast,kovalev2021lower,kovalev2020optimal,kovalev2021adom,li2020decentralized,li2020revisiting,li2021accelerated,qu2019accelerated,rogozin2021accelerated,rogozin2020towards,rogozin2019optimal,scaman2017optimal,uribe2020dual,xu2020accelerated,ye2020multi,ye2020decentralized}).
There exist dual-based methods such as~\cite{scaman2017optimal}
%,uribe2020dual}
that achieve optimal complexities.
	However, dual-based methods often require information related to the Fenchel duality, which is expensive to compute or even intractable in practice.
	In this paper, we focus on dual-free methods or gradient-type methods only.
Some algorithms, for instance~\cite{jakovetic2014fast,li2020decentralized,li2020revisiting,rogozin2021accelerated,rogozin2020towards,ye2020multi,ye2020decentralized}, rely on inner loops of multiple consensus steps to guarantee desirable convergence rates.
{Existing usages of inner loops can be classified into three categories: (1) directly multiplying the local decision variables with the gossip matrix for multiple times~\cite{jakovetic2014fast,rogozin2021accelerated,rogozin2020towards}; (2) Chebyshev acceleration~\cite{li2021accelerated,kovalev2020optimal,scaman2017optimal,scaman2018optimal}; (3) accelerated average consensus (FastMix)~\cite{li2020decentralized,ye2020multi,ye2020decentralized}.}
However, the inner loops place a larger communication burden~\cite{li2021accelerated,qu2019accelerated} which may limit the applications of these methods since communication has often been recognized as the major bottleneck in distributed or decentralized optimization. {For example, the optimal algorithm OPAPC~\cite{kovalev2020optimal} improves the complexities over APAPC by utilizing inner loops of Chebyshev acceleration, but its convergence is actually slower than APAPC with respect to the iteration number (communication rounds).}
In addition, inner loops impose extra coordination steps among the agents as they have to agree on when to terminate the inner loops~\cite{qu2019accelerated}.
%Besides, single-loop methods can be easily equipped with an inner loop while double-loop methods may not even be guaranteed to converge when the inner loop is unavailable in some practical scenarios.
Thus, developing single-loop methods with better complexities is of both theoretical and practical significance.

OPAPC~\cite{kovalev2020optimal} is the first decentralized gradient-type method that is optimal in both gradient computation and communication complexities.
To our best knowledge, for problem~\eqref{eq:prob1}, only OPAPC~\cite{kovalev2020optimal} and Acc-GT$+$CA~\cite{li2021accelerated}
are optimal in both complexities without additional logarithmic factors among the class of gradient-type methods.
But both methods rely on inner loops to reach the optimal complexities.
Therefore, it is natural to ask the following question:
\begin{center}
	\emph{Is there a single-loop decentralized  gradient-type method that achieves optimal gradient computation complexity and communication complexity simultaneously?}
\end{center}

Gradient tracking (GT) is one of the most popular techniques used for developing accelerated decentralized optimization methods.
Most of the existing GT-based accelerated methods rely on inner loops of multiple consensus steps or Chebyshev acceleration (CA) to reduce the consensus errors (the difference among the local variables of different agents) between consecutive steps of the outer loop; see, for instance~\cite{jakovetic2014fast,li2020decentralized,li2020revisiting,rogozin2021accelerated,rogozin2020towards,ye2020multi,ye2020decentralized}.   %\sp{(any of these optimal? better list more for comparison then less)}.
Among single-loop methods, Acc-DNGD-SC~\cite{qu2019accelerated} achieves $O\pr{\frac{\ka^{5/7}}{\gap^{3/2}}\log\frac{1}{\eps}}$ gradient computation and communication complexities.
For Acc-GT studied in~\cite{li2021accelerated}, both complexities  are $O\pr{\frac{\sqrt{\ka}}{\gap^{3/2}}\log\frac{1}{\eps}}$, and the algorithm can be applied to time-varying graphs.
Recently, APD-SC~\cite{song2021provably} was developed for general directed graphs, while the complexities are also $O\pr{\frac{\sqrt{\ka}}{\gap^{3/2}}\log\frac{1}{\eps}}$ when applied to undirected networks.
Comparing the complexities of existing GT-based methods with the lower bounds given in \cite{scaman2017optimal}, the following question arises:
\begin{center}
	\emph{Can we improve the complexities of single-loop GT-based methods to optimality?}
\end{center}

In this paper, we give affirmative answers to the two aforementioned questions with the Optimal Gradient Tracking (\OGT) method.
To our best knowledge, \OGT\ is the first single-loop algorithm that is optimal in both gradient computation and communication complexities within the class of decentralized gradient-type methods.
To develop \OGT, we propose a novel GT-based method termed $\SSGT$, which is of independent interest.
The complexity comparison of $\SSGT$ and $\OGT$ with existing state-of-the-art methods and representative GT-based methods are given in
Table~\ref{tab:SSGTOGT1}.

{\small
\renewcommand{\arraystretch}{2}
\begin{table}
  \caption{Comparison of existing state-of-the-art accelerated decentralized gradient-type methods, classical gradient tracking and some representative accelerated algorithms with $\SSGT$ and $\OGT$.}
  \label{tab:SSGTOGT1}
  \centering
  \begin{threeparttable}
  \begin{tabular}{|c|c|c|c|}
    \hline
    % after \\: \hline or \cline{col1-col2} \cline{col3-col4} ...
    \textbf{Method} & \makecell{\bf Gradient Computation \\ \textbf{Complexity}} & \makecell{\textbf{Communication} \\ \textbf{Complexity}} & \makecell{\textbf{Single-loop } }  \\ \hline
    GT~\cite{di2015distributed,di2016next,nedic2017achieving,qu2017harnessing,xu2015augmented}  & $O\pr{\pr{\ka + \frac{1}{\gap^2}}\log\frac{1}{\eps}}$  & $O\pr{\pr{\ka + \frac{1}{\gap^2}}\log\frac{1}{\eps}}$ & \Checkmark  \\ \hline
    Acc-DNGD-SC~\cite{qu2019accelerated} & $O\pr{\frac{\ka^{5/7}}{\gap^{3/2}}\log\frac{1}{\eps}}$ & $O\pr{\frac{\ka^{5/7}}{\gap^{3/2}}\log\frac{1}{\eps}}$  & \Checkmark  \\ \hline
    \makecell{Acc-GT~\cite{li2021accelerated},  APD-SC~\cite{song2021provably}  } & $O\pr{\frac{\sqrt{\ka}}{\gap^{3/2}}\log\frac{1}{\eps}}$ & $O\pr{\frac{\sqrt{\ka}}{\gap^{3/2}}\log\frac{1}{\eps}}$ & \Checkmark  \\  \hline
    \makecell{
    APAPC~\cite{kovalev2020optimal}%,
    %ADOM+~\cite{kovalev2021lower}
    }
    & $O\pr{\pr{\sqrt{\frac{\ka}{\gap}} + \frac{1}{\gap}}\log\frac{1}{\eps}}$ & $O\pr{\pr{\sqrt{\frac{\ka}{\gap}} + \frac{1}{\gap} }\log\frac{1}{\eps}}$ & \Checkmark  \\  \hline
    \makecell{
    Algorithm 3 of~\cite{kovalev2020optimal}%,
    %ADOM+~\cite{kovalev2021lower}
    }
    & $O\pr{{\sqrt{\frac{\ka}{\gap}}}\log\frac{1}{\eps}}$ & $O\pr{{\sqrt{\frac{\ka}{\gap}} }\log\frac{1}{\eps}}$ & \Checkmark  \\  \hline
    \makecell{APM-C~\cite{li2020decentralized} } & $O\pr{\sqrt{\ka}\log\frac{1}{\eps}}$ & $O\pr{\sqrt{\frac{\ka}{\gap}}\log^2\frac{1}{\eps}}$ & \xmark  \\ \hline
    \makecell{Mudag~\cite{ye2020multi}, DPAG~\cite{ye2020decentralized} } & $O\pr{\sqrt{\ka}\log\frac{1}{\eps}}$ & $O\pr{\sqrt{\frac{\ka}{\gap}}\log\ka\log\frac{1}{\eps}}$ & \xmark  \\ \hline
    \makecell{OPAPC~\cite{kovalev2020optimal},  Acc-GT+CA~\cite{li2021accelerated}
    %, \\ ADOM+ with multi-consensus~\cite{kovalev2021lower}
    } & $O\pr{\sqrt{\ka}\log\frac{1}{\eps}}$ & $O\pr{\sqrt{\frac{\ka}{\gap}}\log\frac{1}{\eps}}$ & \xmark  \\ \hline
    \bf \SSGT \ (This paper) & $O\pr{\sqrt{\ka}\log\frac{1}{\eps}}$ & $O\pr{\frac{\sqrt{\ka}}{\gap}\log\frac{1}{\eps}}$ & \Checkmark  \\ \hline
    \bf \OGT \ (This paper) &  $\bm{O\big(}\bm{\sqrt{\kappa}}\text{\textbf{log}}\frac{\bm{1}}{\bm{\epsilon}}\bm{\big)}$ &  $\bm{O\big(}\bm{\sqrt{\frac{\bm{\kappa}}{\bm{\gap}}}}\text{\textbf{log}}\frac{\bm{1}}{\bm{\epsilon}}\bm{\big)}$ & \Checkmark  \\ \hline
    \makecell{
    \textbf{Lower Bounds}~\cite{scaman2017optimal}%~\cite{scaman2017optimal,hendrikx2020optimal}\tnote{1}
    }
     & $\bm{O\big(}\bm{\sqrt{\kappa}}\text{\textbf{log}}\frac{\bm{1}}{\bm{\epsilon}}\bm{\big)}$ & $\bm{O\big(}\bm{\sqrt{\frac{\bm{\kappa}}{\bm{\gap}}}}\text{\textbf{log}}\frac{\bm{1}}{\bm{\epsilon}}\bm{\big)}$ & $\backslash$       \\ \hline
  \end{tabular}
  \end{threeparttable}

\end{table}

\subsection{Contributions  }
This paper focuses on solving the decentralized optimization problem~\eqref{eq:prob1} with first-order methods. We analyze why the previous single-loop GT-based methods have suboptimal convergence rates and first propose a novel \emph{\SSGTnm}\ (\SSGT) method.
$\SSGT$ improves upon the existing GT-based methods but does not achieve optimality in the communication complexity w.r.t. the graph condition number.
Then, we develop the \lCA\ (LCA)  technique to accelerate $\SSGT$,  which leads to the \emph{\OGTnm}\ (\OGT) method with optimal complexities.
\\~\\
The main contributions of this paper are summarized as follows:
\begin{itemize}
  \item For strongly convex and smooth objective functions, the proposed $\OGT$ method is optimal in both the gradient computation complexity and the communication complexity.
  To our knowledge, $\OGT$ is the first single-loop decentralized method that is optimal in both complexities  within the class of gradient-type methods.

  \item To develop $\OGT$, we first propose a novel gradient tracking method $\SSGT$, which is of independent interest. Compared to most existing GT-based methods that track the average gradient of the decision variables, $\SSGT$ tracks the average gradient of a ``snapshot point" instead.
  $\SSGT$ outperforms existing single-loop GT-based methods (before the development of $\OGT$) and has the potential to be extended to more general settings such as directed graphs and time-varying graphs.

  \item We propose the LCA technique, which not only accelerates the convergence of $\SSGT$, but can also be combined with many other GT-based methods and accelerate these methods with respect to the graph condition number.

  \item {We provide numerical experiments that demonstrate the superior performance of OGT compared to the state-of-the-art algorithms.}
\end{itemize}

\subsection{Roadmap}
We introduce some necessary notations and assumptions in Section~\ref{sec:NotaAssp1}.
In Section~\ref{sec:SSGT1}, we propose and analyze a novel gradient tracking method ($\SSGT$).
In Section~\ref{sec:OGT1},
the \lCA\ (LCA) technique is developed to accelerate $\SSGT$, and we study the complexities of the resulting method $\OGT$.
The efficiency of the new method is confirmed by numerical experiments in Section~\ref{sec:numerical1}.
We conclude this paper in Section~\ref{sec:conclude1}.

\section{Notations and Assumptions}\label{sec:NotaAssp1}

In this paper, $\nt{\cdot}$ denotes the Euclidean norm of vectors, and $\nf{\cdot}$ denotes the Frobenius norm of matrices.
The inner product in the Euclidean space is denoted by $\jr{\cdot, \cdot}.  $
The spectral norm of matrix $\AA$ is denoted by $\nb{\AA}$. %, where $\la_{\max}\pr{\cdot}$ is the largest eigenvalue.
We have the following relation regarding $\nf{\cdot}$ and $\nb{\cdot}$,
\eq{
	\nf{\AA\BB} \leq \nb{\AA}\nf{\BB},\ \forall \AA\in\MatSize{n}{n},\ \BB\in \MatSize{n}{d}.
}
{
By Cauchy-Schwarz inequality, for any vectors $\aa$, $\bb$ with the same length, $2\jr{\aa, \bb} \leq \frac{1 - \la}{\la}\nt{\aa}^2 + \frac{\la}{1 - \la}\nt{\bb}^2$ for any $\la \in (0, 1)$.
%\begin{lemma}
	%For any vector norm $\nm{\cdot}_*$ which can be induced by some inner product $\jr{\cdot, \cdot}_*$,
	Then, for any $\la \in (0, 1)$ and two matrices $\AA, \BB$ of the same size, we have
	\eql{\label{eq:Hilbert}}{
		\mt{\AA + \BB}^2 \leq \frac{1}{\la}\mt{\AA}^2 + \frac{1}{1 - \la}\mt{\BB}^2.
	}
	
%\end{lemma}
Using~\eqref{eq:Hilbert} recursively yields the following elementary inequality: %\sp{(replace $k$ with another letter.)}
%\begin{lemma}
	for $\la_1, \la_2, \cdots, \la_N > 0$ with $\sum_{i=1}^{N}\la_i \leq 1$, and $\AA_i\in \MatSize{m}{d}$ $(1\leq i\leq N)$,
	\eql{\label{eq:Hilbertsq}}{
		\mt{\sum_{i=1}^{N}\AA_i}^2 \leq \sum_{i=1}^{N}\frac{1}{\la_i}\mt{\AA_i}^2.
	}
	
}
	
%\end{lemma}
For a given vector $\vv$, $\vv_{a:b}$ denotes the subvector of $\vv$  containing the entries indexed from $a$ to $b$.
For a given matrix $\AA$, $\AA_{a:b, :}$ is the submatrix of $\AA$ containing the rows indexed from $a$ to $b$.
We use $\one$ to denote the all-ones vector, and $\zero$ denotes the all-zeros vector or the all-zeros matrix.
The sizes of $\one, \zero$ are determined from the context.
The operators $=, \geq, \leq$ are overloaded for vectors and matrices in the entry-wise sense.
%We denote the Frobenius norm by $\mm{}{\cdot}$.

{A sequence $\dr{a_k}_{k=0}^{\infty}$ Q-linearly converges to $0$, if there exists $c\in (0,1]$ such that  $\abs{a_{k+1}}\leq \pr{1 - c}\cdot \abs{a_k}$ for any $k\geq 0$.
A sequence $\dr{b_k}_{k=0}^{\infty}$ R-linearly converges to $0$, if there exist $C > 0$ and $c'\in (0,1]$ such that  $\abs{b_{k}}\leq C\cdot (1 - c')^k$ for any $k\geq 0$.
The concepts of Q-linear convergence and R-linear convergence will be frequently used below.
}

The agent set is denoted by $\calN = \dr{1, 2, \cdots, n}$.
{We make the following standard assumption on the local objective functions $\dr{f_i}$, which is widely used in the decentralized optimization literature; see for instance~\cite{scaman2017optimal,nedic2017achieving,xi2017add,pu2020push,xin2018linear,kovalev2020optimal}. }
\begin{assumption}\label{assp:f}
	For each $i\in \calN$, $f_i\pr{\xx}$ is $\mu$-strongly convex and $L$-smooth, i.e., for any $\xx, \yy \in \Real^{d}$,
    \eq{
        &\jr{\xx - \yy, \na f_i\pr{\xx} - \na f_i\pr{\yy}} \geq \mu \nt{\xx - \yy}^2,\\
        &\nt{\na f_i\pr{\xx} - \na f_i\pr{\yy}} \leq L\nt{\xx - \yy}.
    }
	
\end{assumption}
Therefore, the global objective function $f\pr{\xx} = \frac{1}{n}\sum_{i\in \calN} f_i\pr{\xx} $ is also $\mu$-strongly convex and $L$-smooth, and $f\pr{\xx}$ admits a unique minimizer which is denoted by $\xx^*$.
The condition number of the objective function is defined as $\ka = \frac{L}{\mu}$.

{
    The papers~\cite{ye2020decentralized,ye2020multi} only require $L$-smoothness of each $f_i$ and $\mu$-strong convexity of the average function $f = \frac{1}{n}\sum_{i=1}^{n}f_i$ rather than each $f_i$.
    However, these methods have additional logarithmic terms in the complexities and rely on inner loops of multiple consensus steps, where the iteration numbers of the inner loops depend on $\log\ka$.

    The convexity of each $f_i$ is essential in the proofs of this paper as well as in the paper~\cite{scaman2017optimal}, which gives the lower bounds, and the methods APAPC, OPAPC in~\cite{kovalev2020optimal} and Acc-GT, Acc-GT+CA in~\cite{li2021accelerated}, which are most closely related to this paper.
}

In decentralized optimization algorithms, each agent employs some local $d$-dimension row vectors represented by bold lower-case letters.
We use subscripts to denote the owners of the variables and superscripts to indicate the iteration number.\footnote{Except
%$\WW^k = \underbrace{\WW\cdot \cdots \cdot \WW}_{\text{multiplication of $\WW$ for $k$ times}}$
in $\WW^k$, $\Wt^k$ where $k$ is the power number, all the superscripts $k+1, k, k-1, \cdots $ in this paper refer to the iteration number.  }
For instance, $\xx^k_i$ denotes the value of agent $i$'s local variable $\xx$ at iteration $k$.
The local vectors are usually written compactly into $n$-by-$d$ matrices with the same letter but in bold upper-case style.
For instance,
\eq{
	\XX^k =
	\begin{pmatrix}
		\frac{ \quad }{ } & \xx^k_1 & \frac{ \quad }{ }  \\
		\frac{ \quad }{ } & \xx^k_2 & \frac{ \quad }{ }  \\
		& \vdots &  \\
		\frac{ \quad }{ } & \xx^k_n & \frac{ \quad     }{ }
	\end{pmatrix}.
	%\pr{\xx^k_1, \xx^k_2, \cdots, \xx^k_n    } \in \MatSize{n}{d}.
}
The local gradients $\dr{\na f_i\pr{\xx^k_i}}_{i\in \calN}$ at iteration $k$ are also written compactly in an $n$-by-$d$ matrix as follows %For instance, %\sp{(Define $F$ first.)}
\eq{
	\gF{\XX^k} =
	\begin{pmatrix}
		\frac{ \quad }{ } & \na f_1\pr{\xx^k_1} & \frac{ \quad }{ }  \\
		\frac{ \quad }{ } & \na f_2\pr{\xx^k_2} & \frac{ \quad }{ }  \\
		& \vdots &  \\
		\frac{ \quad }{ } & \na f_n\pr{\xx^k_n} & \frac{ \quad     }{ }
	\end{pmatrix}.
	%\pr{\na f_1\pr{\xx^k_1}, \na f_2\pr{\xx^k_2}, \cdots, \na f_n\pr{\xx^k_n}    }.
}

For any $n$-by-$d$ matrix, we overline the same letter in bold lower-case style to denote the average of its rows, e.g.,
\eq{
	\xa^k = \frac{1}{n}\one\tp\XX^k =  \frac{1}{n}\sum_{i\in\calN}\xx^k_i.
}

The agents are connected through a graph $\calG = \pr{\calN, \calE}$ with $\calE \subset \calN \times \calN$ being the edge set. The graph $\calG$ is assumed to be undirected and connected.
The information exchange between the agents is realized through a gossip matrix $\WW$, which satisfies the following standard condition. %\sp{(Change $k$ to another letter)}
\begin{assumption}\label{assp:W}
{
    The gossip matrix $\WW$ satisfies $\WW\one = \one$, $\WW\tp\one = \one$, and there exists a constant $\gap\in (0, 1]$ such that
    %\begin{fact}
    \eql{\label{eq:nbConW1}}{
    	\nb{\WW - \frac{\one\one\tp}{n}} = 1 - \gap.
    }
    %\end{fact}

}
	
\end{assumption}
\begin{remark}\label{rem:asspWasym1}
    {
        We can instantiate a matrix $\WW$ satisfying Assumption~\ref{assp:W} easily by choosing a doubly stochastic $\WW$, which additionally satisfies $\WW_{ij} > 0$ for any $(i,j)\in \calE$ and $\WW_{ii} > 0 $ for any $i\in \calN$.
    }
\end{remark}

{
Assumption~2 is also standard in decentralized optimization; see for instance, DGD based methods such as~\cite{nedic2009distributed,johansson2010randomized,koloskova2020unified,yuan2016convergence} and gradient tracking methods such as~\cite{qu2017harnessing,di2015distributed,di2016next,qu2019accelerated,pu2020distributed}, among many others.
Since Assumption~2 does not require symmetricity, it is slightly weaker than the graph conditions in most primal-dual methods such as~\cite{shi2015extra,scaman2017optimal,scaman2018optimal,li2019decentralized,kovalev2020optimal}.
}

{
    The convergence of SS-GT is proved under Assumption~\ref{assp:f} and Assumption~\ref{assp:W}.
    The analysis for the OGT method requires another standard assumption stated below.
}
%We make the following additional assumption on the gossip matrix.
\begin{assumption}\label{assp:Wsym}
	(For OGT only)
    The gossip matrix $\WW$ is symmetric and positive semidefinite.
\end{assumption}
\begin{remark}
    To meet Assumption~\ref{assp:W} and~\ref{assp:Wsym} simultaneously, we can simply choose a symmetric doubly stochastic matrix $\widehat{\WW}$ satisfying the conditions in Remark~\ref{rem:asspWasym1}.
    And then, set $\WW = \pr{\II + \widehat{\WW}}/2  $.
\end{remark}

{
    Assumptions~\ref{assp:f},~\ref{assp:W},~\ref{assp:Wsym} are all standard assumptions in decentralized optimization and have been used in many works together; see for instance~\cite{shi2015extra,scaman2017optimal,li2019decentralized,kovalev2020optimal}.
    Particularly, the lower bounds given in~\cite{scaman2017optimal} and the existing optimal methods OPAPC~\cite{kovalev2020optimal} and Acc-GT+CA~\cite{li2021accelerated} are all analyzed under Assumptions~\ref{assp:f},~\ref{assp:W},~\ref{assp:Wsym}.
}

If $\WW$ is symmetric,
from the spectral decomposition, the number $\gap$ defined in~\eqref{eq:nbConW1} is indeed the spectral gap of $\WW$, which refers to the difference between the moduli of the two largest eigenvalues.
And the number $\frac{1}{\gap}$ is considered as the condition number of the communication network in decentralized optimization.

Define the projection matrix
\eq{
	\Con = \II - \frac{\one\one\tp}{n}.
}
It follows directly that $\Con\WW = \Con\WW\Con$ and $\nb{\Con} = 1$.

\def\Pr#1{\textbf{Pr}\br{#1}}
\section{\SSGTnm\     }\label{sec:SSGT1}

In this section, we propose a novel gradient tracking method named \emph{\SSGTnm}\ (SS-GT), which will serve as a building block for $\OGT$ in Section~\ref{sec:OGT1}.
We start with the motivation and explain how \SSGT\ is constructed in Section~\ref{sec:SSGT_motivation}.
In Section~\ref{sec:DOGTconsenLya1}, we construct a Lyapunov function to bound the consensus errors.
In Section~\ref{sec:SSGTrate}, we demonstrate the gradient computation and communication complexities of $\SSGT$.

%To help analyze the performance of $\SSGT$, we rewrite Algorithm~\ref{alg:DiOGT} into a more compact form:
The $\SSGT$ method starts with the initial values:
\eql{\label{eq:initQ0}}{
    \YY^0 = \ZZ^0 = \UU^0 = \QQ^0 = \XX^0 ,\ \GG^0 = \gF{\QQ^0}
}
and updates, for $k=0,1,2, \dots$,
\subeqnuml{\label{eq:SnapshotGT}}{
    %\text{Sample $\xi^k\sim Bernoulli(p)$ and $\zeta^k\sim Bernoulli (q)$}  \\
	\XX^{k} = \pr{1 - \alp- \tau}\YY^k + \alp\ZZ^k + \tau\UU^k \label{eq:X}  \\
	\ZZ^{k+1} = \pr{1 + \bet}\inv\WW\Big( \ZZ^k + \bet\XX^k \notag \\
    \qquad  - \eta\pr{\GG^k + {\zeta^k}\pr{ \gF{\XX^k}-\gF{\QQ^k} }}\Big) \label{eq:Z}  \\
	\YY^{k+1} = \XX^k + \gam\pr{\ZZ^{k+1} - \ZZ^k} \label{eq:Y} \\
	\QQ^{k+1} = \xi^{k}\XX^k + \pr{1 - \xi^k}\QQ^k \label{eq:Q}  \\
	\UU^{k+1} = \WW\pr{\xi^{k}\XX^k + \pr{1 - \xi^k}\UU^k}  \label{eq:U} \\
	\GG^{k+1} = \WW\GG^k + \xi^k\pr{\gF{\XX^k} - \gF{\QQ^k}} \label{eq:G}
	%, \XX^{k+1} = \pr{1 - \alp- \tau}\YY^{k+1} + \alp\ZZ^{k+1} + \tau\UU^{k+1} \label{eq:X}
}
where $\dr{\xi^k, \zeta^k}$ are two-point random variables with $\xi^k\sim Bernoulli\pr{p}$ and $\zeta^k\sim Bernoulli\pr{q}/q$.\footnote{The notation $\zeta^k\sim Bernoulli\pr{q}/q$ means $\Pr{\zeta^k=\frac{1}{q}} = q$ and $\Pr{\zeta^k = 0} = 1 - q$. }

An implementation-friendly version of $\SSGT$ is given in Appendix~\ref{sec:ifDOGTandUOGT} (see Algorithm~\ref{alg:DiOGT}). The equivalence between~\eqref{eq:SnapshotGT} and Algorithm~\ref{alg:DiOGT} comes from the observation that $\MM^k$ in Algorithm~\ref{alg:DiOGT} equals $\gF{\QQ^k}$ in~\eqref{eq:SnapshotGT}.

\begin{remark}
	At each iteration, $\xi^k$ and $\zeta^k$ are two parameters shared with all agents, and all agents need to generate the same random numbers $\xi^k, \zeta^k$ locally.
	%	However, we require that $\xi^k, \zeta^k$ of different agents are the same.
	This can be done by letting the agents share a common random seed at the beginning and use the same random number generator initialized by this common random seed to generate $\dr{\xi^k, \zeta^k}_{k\geq 0}$.
	A similar technique was used  in~\cite{scaman2018optimal} to let the agents generate the same sequence of Gaussian random variables without communication.
\end{remark}

\begin{remark}
	The agents need to compute the gradient of $\XX^k$ only when $\xi^k \neq 0$ or $\zeta^k \neq 0 $.
	Hence if $p, q$ are small, in most iterations, the agents only fuse the information received from their neighbors without computing gradients.
	If $\xi^k$ and $\zeta^k$ are independent, $p + (1- p)q$ gradient computations are required in expectation for each iteration.
	However, by setting $p = q$ and $\xi^k = q\zeta^k$, only $p$ gradient computations are required in expectation for each iteration.
\end{remark}

\subsection{Motivation and preliminary analysis}\label{sec:SSGT_motivation}
In this part, we first review the classical gradient tracking methods and their accelerated versions.
By investigating why these methods are suboptimal, we provide the motivation behind \SSGT.
Then, we describe the roadmap to prove the complexities of \SSGT\ and provide some preliminary analysis.

\subsubsection{Related gradient tracking based methods}
The classical gradient tracking (GT) methods achieve linear convergence for smooth and strongly-convex objective functions. They can be implemented with uncoordinated stepsizes and generalized to using row and/or column stochastic mixing matrices.
With the initialization $\SS^0 = \gF{\XX^0}$, the simplest GT implementation works as follows:
\eq{
	\XX^{k+1} &= \WW\XX^k - \eta\SS^k,  \\
	\SS^{k+1} &= \WW\SS^k + \gF{\XX^{k+1}} - \gF{\XX^k},
}
where $\XX^k\in\mathbb{R}^{n\times d}$ contains the local decision variables and $\SS^k\in \MatSize{n}{d}$ is called the ``gradient tracker".
A typical way to analyze the performance of GT starts from the following decomposition:
\eq{
	\XX^k &= \XX^k - \frac{1}{n}\one\one\tp\XX^k + \frac{1}{n}\one\one\tp\XX^k = \Con\XX^k + \one\xa^k,\\
	\SS^k &= \SS^k - \frac{1}{n}\one\one\tp\SS^k + \frac{1}{n}\one\one\tp\SS^k = \Con\SS^k + \one\agt^k,
}
where the notations $\Con, \xa^k$, and $\agt^k$ have been introduced in Section~\ref{sec:NotaAssp1}.
%The terms $\mt{\Con\XX^k}^2, \mt{\Con\YY^k}^2$ are called ``consensus errors".
The ``consensus error" $\mt{\Con\XX^k}^2 = \sum_{i\in\calN}\nt{\xx^k_i - \xa^k}^2$ is the summation of the squared distances between each local decision variable to their average.
In light of property~\eqref{eq:nbConW1}, we have $\nf{\Con\WW\XX^k} = \nf{\Con\WW\Con\XX^k} \leq \nb{\Con\WW}\nf{\Con\XX^k} = \pr{1 - \gap}\nf{\Con\XX^k}$.
This indicates that multiplying $\XX^k$ by $\WW$ reduces the consensus errors.

%To understand why $\YY^k$ is the gradient tracker,
By Assumption~\ref{assp:W}, we have $\one\tp\WW = \one\tp$, and therefore,
\eq{
	\agt^k =& \frac{1}{n}\one\tp\pr{\WW\SS^{k-1} + \gF{\XX^k} - \gF{\XX^{k-1}}} \\
    =& \agt^{k-1} + \frac{\one\tp\pr{\gF{\XX^k} - \gF{\XX^{k-1}}}}{n}.
}
%where the notation $\ya^k = \frac{1}{n}\one\tp\YY^k$ is the average of the rows of $\YY^k$ as we have defined in Section~\ref{sec:NotaAssp1}.
It follows by induction that
\eq{
	\agt^k = \frac{1}{n}\one\tp\gF{\XX^k}.
}
Note that  $\frac{1}{n}\one\tp\gF{\one\xa^k} = \na f\pr{\xa^k}$.
When the consensus error of $\XX^k$ is small, $\agt^k$ will be close to the true gradient $\na f\pr{\xa^k}$.

Based on the above observations, it is important to answer the following questions for analyzing and accelerating the convergence of GT-based methods.
\emph{
	\begin{enumerate}[\textbf{Question}~I:]%\setcounter{enumi}{0}
		\item  How to bound the errors coming from inexact gradients $\agt^k \neq \na f\pr{\xa^k}$? \label{que:I}
		\item  How to decrease the consensus error of $\XX^k$ fast?  \label{que:II}
	\end{enumerate}
}
\que{I} can be answered by Lemma~\ref{lem:inextgt1}, which is often used in the analysis of GT-based methods.
Lemma~\ref{lem:inextgt1} bounds the errors induced by inexact gradients through consensus errors,
which reduces \que{I} to \que{II}.
\begin{lemma}\label{lem:inextgt1}
	Denote $\da^k = \frac{1}{n}\gF{\XX^k}$.
	For any row vectors $\aa, \bb \in \Real^d  $ and $k \geq 0$,
	\eql{\label{eq:inextL}}{
		f\pr{\aa} \leq     f\pr{\bb} + \jr{\da^k, \aa - \bb} + L\nt{\xa^k - \aa}^2 + \frac{L}{n}\mt{\Con\XX^k}^2,
	}
	and
	\eql{\label{eq:inextmu}}{
		f\pr{\xa^k} \leq f\pr{\xx^*} + \jr{\da^k, \xa^k - \xx^*} - \frac{\mu}{4}\nt{\xa^k - \xx^*}^2 + \frac{L}{n}\mt{\Con\XX^k}^2.
	}
\end{lemma}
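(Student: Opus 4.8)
The plan is to prove both bounds ``agent by agent'': I would first establish the corresponding inequality for each local objective $f_i$ evaluated at its own iterate $\xx_i^k$, and only afterwards average over $i\in\calN$. This is the natural route because averaging the per-agent gradient inner products produces exactly $\da^k = \frac{1}{n}\sum_{i\in\calN}\na f_i\pr{\xx_i^k}$, so I never have to compare $\da^k$ with $\na f\pr{\xa^k}$ or invoke a separate gradient-Lipschitz estimate.

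For~\eqref{eq:inextL} I would combine, for each $i$, the $L$-smooth upper bound of $f_i$ taken between $\xx_i^k$ and $\aa$ with the convexity lower bound of $f_i$ taken between $\bb$ and $\xx_i^k$ (both supplied by Assumption~\ref{assp:f}). Adding the two, the gradient terms telescope, $\jr{\na f_i\pr{\xx_i^k}, \aa - \xx_i^k} + \jr{\na f_i\pr{\xx_i^k}, \xx_i^k - \bb} = \jr{\na f_i\pr{\xx_i^k}, \aa - \bb}$, leaving
\eq{
	f_i\pr{\aa} \leq f_i\pr{\bb} + \jr{\na f_i\pr{\xx_i^k}, \aa - \bb} + \frac{L}{2}\nt{\aa - \xx_i^k}^2.
}
Averaging over $i$ converts the middle term into $\jr{\da^k, \aa - \bb}$, and the only remaining work is to rewrite $\frac{1}{n}\sum_i\nt{\aa - \xx_i^k}^2$ by pivoting at the mean. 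Using the identity $\sum_{i\in\calN}\nt{\xx_i^k - \vv}^2 = \mt{\Con\XX^k}^2 + n\nt{\xa^k - \vv}^2$ (valid for any fixed $\vv$, since the cross term vanishes) with $\vv = \aa$ gives the result with constants $\frac{L}{2}$ and $\frac{L}{2n}$, which I would relax to the stated $L$ and $\frac{L}{n}$.

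For~\eqref{eq:inextmu} the argument is structurally identical, but I would pair the $L$-smooth upper bound of $f_i$ between $\xx_i^k$ and $\xa^k$ with the \emph{$\mu$-strongly convex} lower bound of $f_i$ between $\xx^*$ and $\xx_i^k$; the latter contributes the extra negative term $-\frac{\mu}{2}\nt{\xx_i^k - \xx^*}^2$. After averaging, $\frac{1}{n}\sum_i\nt{\xa^k - \xx_i^k}^2$ is by definition $\frac{1}{n}\mt{\Con\XX^k}^2$, while for the strong-convexity term I would apply the same pivot identity with $\vv = \xx^*$, so that $-\frac{\mu}{2n}\sum_i\nt{\xx_i^k - \xx^*}^2 = -\frac{\mu}{2n}\mt{\Con\XX^k}^2 - \frac{\mu}{2}\nt{\xa^k - \xx^*}^2$. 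Collecting terms leaves $-\frac{\mu}{2}\nt{\xa^k - \xx^*}^2$ and a consensus-error coefficient $\frac{L-\mu}{2n}$, both of which I would loosen to the claimed $-\frac{\mu}{4}\nt{\xa^k - \xx^*}^2$ and $\frac{L}{n}$.

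I do not expect a real obstacle: each inequality is just a smoothness/convexity pair followed by one Pythagorean splitting. The single point that needs care is the bias decomposition — the quadratic terms must be pivoted at the \emph{average} $\xa^k$ so that the linear cross terms are annihilated precisely by $\one\tp\Con = \zero\tp$, i.e. $\sum_{i\in\calN}\pr{\xx_i^k - \xa^k} = \zero$. I would also note that the per-agent route actually delivers slightly sharper constants than stated ($\frac{\mu}{2}$ and $\frac{L-\mu}{2n}$ versus $\frac{\mu}{4}$ and $\frac{L}{n}$); keeping the weaker, rounder constants is harmless for an upper bound and presumably leaves slack for the downstream Lyapunov analysis.
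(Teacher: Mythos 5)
Your proposal is correct and takes essentially the same route as the paper: for each agent $i$, pair the convexity (resp.\ $\mu$-strong convexity) lower bound of $f_i$ pivoted at $\xx^k_i$ with the $L$-smoothness upper bound, telescope the gradient inner products, and average over $i\in\calN$ so that $\da^k$ appears directly. The only difference is in the final splitting step: the paper bounds the averaged quadratic terms via Young-type inequalities ($\mt{\AA+\BB}^2\leq 2\mt{\AA}^2+2\mt{\BB}^2$ and $-\nt{\aa+\bb}^2\leq-\frac{1}{2}\nt{\aa}^2+\nt{\bb}^2$), whereas you use the exact mean-pivot identity, which gives the slightly sharper intermediate constants you note before relaxing to the stated ones; both are valid.
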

The proof of Lemma~\ref{lem:inextgt1} can be found in, for instance~\cite{jakovetic2014fast,qu2017harnessing,qu2019accelerated}, and it is included in Appendix~\ref{sec:prfleminextgt1} for completeness.

To address \que{II}, probably the simplest way is to add inner loops of multiple consensus steps in the algorithm.
However, inner loops have several major drawbacks compared with single-loop methods, as discussed in the introduction.
Without inner loops, controlling the consensus errors requires sufficiently small stepsizes which usually result in slow convergence.
Many efforts have been made to overcome this issue.

The acc-DNGD-SC method in~\cite{qu2019accelerated} combines Nesterov's accelerated gradient descent with GT directly in the following way:
\eq{
	\YY^0 &= \ZZ^0 = \XX^0,\ \SS^0 = \gF{\XX^0},  \\
	\ZZ^{k+1} &= \pr{1 - \alp}\WW\ZZ^k + \alp\WW\XX^k - \frac{\eta}{\alp}\SS^k,  \\
	\YY^{k+1} &= \WW\XX^k - \eta\SS^k,  \\
	\XX^{k+1} &= \frac{\YY^{k+1} + \alp\ZZ^{k+1}}{1 + \alp},  \\
	\SS^{k+1} &= \WW\SS^k + \gF{\XX^{k+1}} - \gF{\XX^k},
}
where $\SS^k$ is the gradient tracker.
The communication complexity and gradient computation complexity of acc-DNGD-SC are both $O\pr{\frac{\ka^{5/7  }}{\gap^{3/2}}\log\frac{1}{\eps}}$.

\newcommand\thek{\beta}
A recent work~\cite{li2021accelerated} considered accelerated GT over time-varying graphs.
When applied to static graphs, the proposed Acc-GT method works as follows:
\eq{
	\YY^0 &= \ZZ^0 = \XX^0,\  % = \one\vv^0 \ (\vv^0\in \Real^d),\ \
    \SS^0 = \gF{\XX^0},   \\
	\ZZ^{k+1} &= \frac{1}{1 + \frac{\mu\alp}{\thek}}\pr{\WW\pr{\frac{\mu\alp}{\thek}\XX^k + \ZZ^k} - \frac{\alp}{\thek}\SS^k},  \\
	\YY^{k+1} &= \thek\ZZ^{k+1} + \pr{1 - \thek}\WW\YY^k,  \\
	\XX^{k+1} &=  \thek\ZZ^{k+1} + \pr{1 - \thek}\YY^{k+1},  \\
	\SS^{k+1} &= \WW\SS^{k} + \gF{\XX^{k+1}} - \gF{\XX^{k}},
}
where $\SS^k$ is the gradient tracker.
Acc-GT was proven to have communication complexity and gradient computation complexity $O\pr{\frac{\sqrt{\ka}}{\gap^{3/2}}\log\frac{1}{\eps}}$, which is optimal in the function condition number $\ka$ but suboptimal in $\frac{1}{\gap}$.
Such a result implies that better answers for \que{II} are imperative for improving the algorithmic complexities. In particular, notice that the aforementioned methods perform one gradient computation and $O(1)$ communication steps in one iteration. Hence they always achieve the same gradient computation complexity and communication complexity.
However, the  lower bound on the gradient computation complexity $O\pr{\sqrt{\ka}\log\frac{1}{\eps}}$ obtained in~\cite{scaman2017optimal} is independent of $\gap$. %{\color{red}a citation is needed.}
To develop optimal methods, we must address the following question.
\emph{
	\begin{enumerate}[\textbf{Question}~I:]\setcounter{enumi}{2}
		\item  How to get rid of $\gap$ in the gradient computation complexity?  \label{que:III}
	\end{enumerate}
}

\subsubsection{Motivation for \SSGT\ and preliminary analysis}
The aforementioned GT-based methods update the gradient tracker in the following way:
\eq{
	\SS^{k+1} = \WW\SS^k + \gF{\XX^{k+1}} - \gF{\XX^k}.
}
In accelerated methods, due to the Nesterov momentum, the distance between $\XX^{k+1}$ and $\XX^k$ can be large, which implies that $\gF{\XX^{k+1}} - \gF{\XX^k}$ can also be large. This further leads to large consensus errors for the gradient tracker. As a result,  small stepsize has to be used to control the consensus errors, and it eventually leads to suboptimal convergence rates.
%Therefore, we are motivated to design new gradient trackers with less consensus errors.

%To reduce the consensus error in the gradient tracker,
To obtain a better answer for \que{II},
inspired by SVRG~\cite{johnson2013accelerating}, Katyusha~\cite{allen2017katyusha}, Loopless SVRG~\cite{kovalev2020don}
and ADIANA~\cite{li2020acceleration},
we introduce a ``snapshot point" $\QQ^k$, which records some history positions of $\XX^k$.
Unlike previous GT-based methods whose gradient tracker $\SS^k$ tracks the average gradient of $\XX^k$,
a new gradient tracker $\GG^k$ is employed by $\SSGT$ to track the average of $\gF{\QQ^k}$.
This is shown by the following lemma.
%Using the random variable $\xi^k$ also helps addressing \que{III}. When $p$ is small, e.g., $O(\gap)$ in $\SSGT$ or $O(\sqrt{\gap})$ in $\OGT$, the gradient computation complexity can be smaller than the communication complexity.
\begin{lemma}\label{lem:gakm}
	For any $0 \leq k \leq K$,
	\eql{\label{eq:gakm}}{
		\ga^k = \frac{1}{n}\one\tp\gF{\QQ^k}.
	}
	
\end{lemma}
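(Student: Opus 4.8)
The plan is to establish~\eqref{eq:gakm} by induction on $k$, leaning on two structural facts: the double stochasticity of $\WW$, which ensures that row-averaging is preserved under left-multiplication by $\WW$ (indeed $\one\tp\WW = \one\tp$ by Assumption~\ref{assp:W}), and the fact that $\xi^k$ is a $\dr{0,1}$-valued random variable, which is ultimately what lets me move $\gF{\cdot}$ through the recursion~\eqref{eq:Q} defining $\QQ^{k+1}$.

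For the base case $k=0$, the initialization~\eqref{eq:initQ0} gives $\GG^0 = \gF{\QQ^0}$ directly, so $\ga^0 = \frac{1}{n}\one\tp\GG^0 = \frac{1}{n}\one\tp\gF{\QQ^0}$. For the inductive step I would assume $\ga^k = \frac{1}{n}\one\tp\gF{\QQ^k}$, left-multiply the gradient-tracker update~\eqref{eq:G} by $\frac{1}{n}\one\tp$, and use $\one\tp\WW = \one\tp$ to obtain
\eq{
	\ga^{k+1} = \frac{1}{n}\one\tp\GG^k + \frac{\xi^k}{n}\one\tp\pr{\gF{\XX^k} - \gF{\QQ^k}}.
}
Substituting the inductive hypothesis and collecting terms then yields
\eq{
	\ga^{k+1} = \frac{1}{n}\one\tp\pr{\pr{1 - \xi^k}\gF{\QQ^k} + \xi^k\gF{\XX^k}}.
}

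The one point that deserves care is identifying the bracketed quantity with $\gF{\QQ^{k+1}}$: this is \emph{not} linearity of the gradient (which would be false in general), but rather a consequence of $\xi^k \in \dr{0,1}$. When $\xi^k = 1$, the update~\eqref{eq:Q} forces $\QQ^{k+1} = \XX^k$ and the bracket collapses to $\gF{\XX^k}$; when $\xi^k = 0$, it forces $\QQ^{k+1} = \QQ^k$ and the bracket collapses to $\gF{\QQ^k}$. In either case the bracket equals $\gF{\QQ^{k+1}}$ row by row, so $\ga^{k+1} = \frac{1}{n}\one\tp\gF{\QQ^{k+1}}$ and the induction closes. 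I do not anticipate a genuine obstacle here; the main thing is to avoid the tempting but illegitimate step of distributing $\gF{\cdot}$ over the convex combination as though it were linear, and instead to exploit the binary nature of $\xi^k$.
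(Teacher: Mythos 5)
Your proof is correct and takes essentially the same route as the paper's: both argue by induction on $k$, use $\one\tp\WW = \one\tp$ from Assumption~\ref{assp:W} to average the update~\eqref{eq:G}, and close the inductive step via the case analysis $\xi^k \in \dr{0,1}$. The only (cosmetic) difference is that you carry the two cases in a single algebraic expression $\pr{1-\xi^k}\gF{\QQ^k} + \xi^k\gF{\XX^k}$ and split cases only when identifying it with $\gF{\QQ^{k+1}}$, whereas the paper branches on $\xi^k$ from the outset.
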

\begin{proof}
	See Appendix~\ref{sec:prfgakm1}.
	\end{proof}

{
    To avoid an outer loop for updating the ``snapshot point" $\QQ^k$ so that SS-GT can be a single-loop algorithm, we consider the technique used in Loopless SVRG~\cite{kovalev2020don} where ``snapshot" points are updated in a coin-flip manner.
    More specifically, the update of ``snapshot point" $\QQ^k$ follows the Bernoulli distribution with parameter $p$.
}
 When $p$ is small, $\QQ^k$ is not updated frequently, and when $\QQ^k$ is not updated, we multiply the gradient tracker $\GG^k$ with $\WW$ to reduce its consensus error.
%The following lemma shows that the average of $\QQ^k$ is the same as the average of $\gF{\QQ^k}$.
When $\QQ^k$ is updated, the gradient different $\gF{\XX^k} - \gF{\QQ^k}$ is added into the gradient tracker $\GG^k$ (see~\eqref{eq:G}).
To avoid the difference $\gF{\XX^k} - \gF{\QQ^k}$ in~\eqref{eq:Z} and~\eqref{eq:G} becoming too large, we add a ``negative momentum" in the linear coupling step~\eqref{eq:X}, which is inspired from Katyusha~\cite{allen2017katyusha}.
The ``negative momentum" behaves like a ``magnet" that retracts $\XX^k$ towards some historical positions.
A straightforward choice for the history position is the ``snapshot point" $\QQ^k$.
In this way, we can avoid $\XX^k$ from getting too far away from $\QQ^k$, and $\gF{\XX^k} - \gF{\QQ^k}$ won't be too large.
However, the consensus error in $\QQ^k$ is not reduced during the iteration because it does not change if $\xi^k=0$. So, it does not help to reduce the consensus error in $\XX^k$.
We introduce another variable $\UU^k$ with a smaller consensus error than $\QQ^k$ as the historical position. When $\QQ^k$ is updated using the recent $\XX^{k-1}$, we also updated $\UU^k$ using $\WW\XX^{k-1}$. Whenever $\QQ^k$ is not updated, we update $\UU^k$ by multiplying it with $\WW$. Then, it can be proved easily by induction that   $\UU^k$ has the same average as $\QQ^k$, i.e.,
 \eql{\label{eq:efctua1}}{
 	\ua^k = \qa^k,\ \forall 0 \leq k\leq K.
}but $\UU^k$ has a smaller consensus error than $\QQ^k$.

The above discussion intuitively explains how we address \que{II}.

The new gradient tracker $\GG^k$ has an additional advantage.
To compute the gradient tracker $\SS^{k}$, the agents needs to compute $\gF{\XX^{k}}$.
Therefore, the gradient computation complexities of most existing GT-based methods are always no less than their communication complexities.
However, for our new $\GG^k$, when $\QQ^k$ is not updated (i.e., $\xi^k = 0$), there is no need to compute the gradients   in step~\eqref{eq:G}.

Nevertheless, the new gradient   tracker $\GG^k$ also has a drawback.
The previous gradient tracker $\SS^k$ satisfies $\agt^k = \frac{1}{n}\gF{\XX^k}$, which indicates that $\agt$ serves as a good estimator for $\na f\pr{\xa^k}$ when the consensus error is small. However, due to the distance between $\XX^k$ and $\QQ^k$, $\ga^k$ may not be an ``accurate enough" gradient estimator for $\na f\pr{\xa^k}$.
A naive way is to use
\eq{
    \DD^k =  \GG^k + \gF{\XX^k} - \gF{\QQ^k}
}
as a gradient estimator instead because we can show by~\eqref{eq:gakm} that $\da^k = \frac{1}{n}\one\tp\DD^k =  \frac{1}{n}\one\tp\gF{\XX^k}$.
However, computing a new $\DD^k$ in each iteration requires at least one gradient computation per iteration.
This prevents us from solving \que{III}.
Inspired by SPIDER~\cite{fang2018spider},
we introduce the random variable $\zeta^k$ and use the term $\GG^k + \zeta^k\pr{\gF{\XX^k} - \gF{\QQ^k}}$ in~\eqref{eq:Z} which is an unbiased estimator for $\one\tp\gF{\XX^k}$ as
\eq{
	\frac{1}{n}\one\tp\E{\GG^k + \zeta^k\pr{\gF{\XX^k} - \gF{\QQ^k}}} = \frac{1}{n}\one\tp\gF{\XX^k},
}
where the equality comes from $\E{\zeta^k} = 1$ and~\eqref{eq:gakm}.
Since we need to compute the gradients only when $\xi^k \neq 0$ or $\zeta^k \neq 0$, by setting $p$, $q$ to be in the order of $O\pr{\gap}$ in \SSGT\ or $O\pr{\sqrt{\gap}}$ in \OGT, the gradient computation complexity is shown to be independent of $\gap$ for both methods.
At the same time, the communication complexities are not affected.
This explains how we address~\que{III}.

Hereafter, we will denote
\eq{
	\da^k = \frac{1}{n}\one\tp\gF{\XX^k}.
}
The $n$-by-$d$ matrices such as $\XX^k$ will be decomposed as its consensus error and the average part (here, we take $\XX^k$ as an example):
\eq{
	\XX^k = \Con\XX^k + \one\xa^k.
}
Multiplying $\frac{\one\tp}{n}$ on both sides of~\eqref{eq:X}-\eqref{eq:G} and combining with~\eqref{eq:gakm} and~\eqref{eq:efctua1} yield
\subeqnuml{\label{eq:aupdt1}}{
	\xa^{k} = \pr{1 - \alp - \tau}\ya^k + \alp\za^k + \tau\ua^k, \label{eq:xa}  \\
	\za^{k+1} = \pr{1 + \bet}\inv\pr{\za^k + \bet\xa^k - \eta\ga^k + \zeta^k\eta\pr{\ga^k - \da^k}}, \label{eq:za}  \\
	\ya^{k+1} %= \pr{1 - \alp - \tau} + \gam\za^{k+1} + \tau\ua^k
	= \xa^k + \gam\pr{\za^{k+1} - \za^k}, \label{eq:ya} \\
	\qa^{k+1} = \ua^{k+1} = \xi^{k}\xa^k + \pr{1 - \xi^k}\ua^k,  \label{eq:ua}  \\
	\ga^{k+1} = \frac{1}{n}\one\tp\gF{\QQ^{k+1}}. \label{eq:ga}
}

In the rest of this section, we will first construct a Lyapunov function to bound the consensus errors by a ``Q-linear" sequence with additional errors in the form of gradient differences.
Then, we provide bounds for the descent of the average parts in~\eqref{eq:aupdt1} with additional errors in terms of consensus errors.
Finally, we combine both parts to show the complexities of $\SSGT$.

\subsection{Bounding the consensus errors  }\label{sec:DOGTconsenLya1}
In this section, we construct a Lyapunov function to bound the consensus errors of $\XX^k$ and $\QQ^k$.
To see why this is necessary, first note that later in Section~\ref{sec:SSGTrate}, we will invoke Lemma~\ref{lem:inextgt1} to bound the descent of the average parts in~\eqref{eq:aupdt1}.
Since Lemma~\ref{lem:inextgt1} pertains to the errors induced by inexact gradients in terms of $\mt{\Con\XX^k}^2$,  the consensus error of $\XX^k$ needs to be bounded.
Second, there are certain errors in terms of $\mt{\gF{\XX^k} - \gF{\QQ^k}}^2$ on the RHS of~\eqref{eq:dajrz+1} in Lemma~\ref{lem:dajrsupp}.
By~\eqref{eq:diffgF} in Lemma~\ref{lem:diffgF12}, the gradient difference $\mt{\gF{\XX^k} - \gF{\QQ^k}}^2$ is related to $\mt{\Con\XX^k}^2$ and  $\mt{\Con\QQ^k}^2$,
and hence  $\mt{\Con\QQ^k}^2$ also needs to be bounded.
The consensus errors of the other variables, including $\YY^k$, $\ZZ^k$, $\GG^k$, $\UU^k$ will only occur in the Lyapunov function in Lemma~\ref{lem:conbd1} and not in the main analysis of $\SSGT$. Thus we can regard $\mt{\Con\YY^k}^2$, $\mt{\Con\ZZ^k}^2$, $\mt{\Con\GG^k}^2$, $\mt{\Con\UU^k}^2$ as ``auxiliary variables" which help bound the consensus errors of $\XX^k$ and $\QQ^k$.

Regarding the magnitudes of the related quantities,
we mention in advance that in \SSGT\
we will choose
\eql{\label{eq:guideparastp1}}{
	\text{
	%$\tau = \frac{1}{2}$,
	$p, q = O\pr{\gap}$, $\alp, \gam = O\pr{\frac{1}{\sqrt{\ka}}}$, $\eta = O\pr{\frac{\gap\sqrt{\ka}}{L}}$, $\bet = O\pr{\frac{\gap}{\sqrt{\ka}}}$,}
}
and $\tau$ is chosen as a constant in $(0, 1)$ such as $\tau = \frac{1}{2}$.
In light of the magnitudes mentioned in~\eqref{eq:guideparastp1}, inequalities~\eqref{eq:Lyparastp} can be satisfied easily by choosing the parameters properly.

Let $\calF_k$ denote the sigma field generated by $\dr{\xi^j, \zeta^j}_{0\leq j \leq k-1}$, and $\Ek{ \ \cdot \ } $ is the conditional expectation taken over $\calF_k$.
Then, for any $k\geq 0$, the variables $\{\XX^k, \YY^k, \ZZ^k, \QQ^k, \UU^k, \GG^k \}  $ are measurable with respect to $\calF_k$.
%We remark that to help readers see the proofs more clearly, we leave some common factors on both sides of the inequalities which serve as requirements for some lemmas such as~\eqref{eq:Lyparastp},~\eqref{eq:parastpdiffgF}.

\begin{lemma}\label{lem:conbd1}
	If the parameters and stepsize satisfy\footnote{To help readers follow the proofs more easily, we leave some common factors on both sides of the inequalities such as~\eqref{eq:Lyparastp},~\eqref{eq:parastpdiffgF},~\eqref{eq:thmsnapshotGTcond1} which serve as requirements for some lemmas.}
	\subeql{\label{eq:Lyparastp}}{
		&\frac{2\pr{1 - \tau}}{2 - \tau} + \frac{192\pr{1 - \tau}\gam^2\bet^2}{\tau\gap^2} + \frac{\tau}{4} + \frac{16 p}{\gap}  \leq 1 - \frac{\tau}{8},  \\
		%&\frac{17\tau  \gap}{40} \leq \frac{3\gap}{2} - \gap^2  \\
		&\frac{\alp^2}{\pr{1 - \tau}\gam^2} \leq 1,  \\
		&\frac{192\pr{1 - \tau}\gam^2\eta^2  L^2}{\tau\gap} \leq \frac{\gap}{2},
	}
	then
	\eql{\label{eq:LyConX}}{
		\mt{\Con\XX^k}^2 + \mt{\Con\QQ^k}^2 \leq& \pr{1 - \min\dr{\frac{p}{2}, \tau, \frac{\gap}{2}}}\Ly^k - \Ek{\Ly^{k+1}}  \\
		& + \pb\mt{\gF{\XX^k} - \gF{\QQ^k}}^2,
	}
	where the Lyapunov function is given by
	\eq{
		\Ly^k = \frac{8}{\tau}{\Big(}\frac{\tau}{4p}\mt{\Con\QQ^k}^2 + &\frac{4\pr{1 - \tau}}{4 - \tau}\mt{\Con\YY^{k}}^2 + \frac{16}{\gap}\mt{\Con\UU^{k}}^2  \\
		+ &\pr{1 - \frac{\gap}{6}}\frac{48\pr{1 - \tau}\gam^2}{\tau\gap}\mt{\Con\ZZ^{k}}^2
		+ \frac{1}{\gap  L^2}\mt{\Con\GG^{k}}^2{\Big)},
	}
	and \eql{\label{eq:defpb1}}{\pb = \frac{384\pr{1 - \tau}\gam^2}{\tau^2\gap}\pr{\frac{\eta^2}{q} + \frac{2\eta^2}{\gap}} + \frac{16  p}{\tau  \gap  L^2}.    }
	
\end{lemma}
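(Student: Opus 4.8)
The plan is to derive a one-step recursion under the conditional expectation $\Ek{\cdot}$ for each of the five consensus-error quantities that make up $\Ly^k$ — that is, $\mt{\Con\QQ^k}^2$, $\mt{\Con\YY^k}^2$, $\mt{\Con\UU^k}^2$, $\mt{\Con\ZZ^k}^2$, and $\mt{\Con\GG^k}^2$ — and then to combine these five bounds with the prescribed Lyapunov weights so that $\Ek{\Ly^{k+1}}$ collapses into $\pr{1-\delta}\Ly^k$ minus $\mt{\Con\XX^k}^2+\mt{\Con\QQ^k}^2$ plus $\pb\mt{\gF{\XX^k}-\gF{\QQ^k}}^2$, where $\delta=\min\dr{\frac{p}{2},\tau,\frac{\gap}{2}}$. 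The recurring tools are three: the contraction~\eqref{eq:nbConW1}, which together with $\Con\WW=\Con\WW\Con$ gives $\mt{\Con\WW\Con\AA}^2\le\pr{1-\gap}^2\mt{\Con\AA}^2$; the splitting inequalities~\eqref{eq:Hilbert} and~\eqref{eq:Hilbertsq}; and, crucially, a bias--variance decomposition under $\Ek{\cdot}$: I write every stochastic increment as its conditional mean plus a mean-zero fluctuation, so that the cross term vanishes in expectation and the variance factors ($\Ek{\pr{\xi^k-p}^2}=p\pr{1-p}$ and $\Ek{\pr{\zeta^k-1}^2}\le\frac{1}{q}$) attach only to the fluctuation. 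The first step is to left-multiply each of~\eqref{eq:X}--\eqref{eq:G} by $\Con$ and use $\Con\WW=\Con\WW\Con$, turning every update into an affine combination of previous-step consensus errors plus the gradient difference $\Con\pr{\gF{\XX^k}-\gF{\QQ^k}}$.

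I would dispatch the ``decoupled'' variables first. Because $\xi^k\in\dr{0,1}$, the $\QQ$- and $\UU$-updates give exact convex combinations, $\Ek{\mt{\Con\QQ^{k+1}}^2}=p\mt{\Con\XX^k}^2+\pr{1-p}\mt{\Con\QQ^k}^2$ and $\Ek{\mt{\Con\UU^{k+1}}^2}\le\pr{1-\gap}^2\pr{p\mt{\Con\XX^k}^2+\pr{1-p}\mt{\Con\UU^k}^2}$; note that the $\QQ$-weight $\frac{2}{p}$ hidden in $\Ly^k$ is tuned so that $\frac{2}{p}\cdot p\,\mt{\Con\XX^k}^2=2\mt{\Con\XX^k}^2$, one copy of which supplies the $\mt{\Con\XX^k}^2$ on the left-hand side while the other is available to absorb feedthroughs. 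For $\GG$, I split $\xi^k\Con\pr{\gF{\XX^k}-\gF{\QQ^k}}=p\,\Con\pr{\gF{\XX^k}-\gF{\QQ^k}}+\pr{\xi^k-p}\Con\pr{\gF{\XX^k}-\gF{\QQ^k}}$: the mean block contracts via~\eqref{eq:Hilbert} to $\pr{1-\gap}\mt{\Con\GG^k}^2$ plus a term of order $\frac{p^2}{\gap}\mt{\gF{\XX^k}-\gF{\QQ^k}}^2$, while the fluctuation adds $p\pr{1-p}\mt{\gF{\XX^k}-\gF{\QQ^k}}^2$; since $p=O\pr{\gap}$, the resulting gradient-difference coefficient is $O\pr{p}$ and, after the weight $\frac{8}{\tau\gap L^2}$, produces the $\frac{16p}{\tau\gap L^2}$ piece of $\pb$.

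The coupled pair $\YY,\ZZ$ is the crux. Substituting~\eqref{eq:Y} and then~\eqref{eq:X}, I write $\Con\YY^{k+1}=\pr{1-\alp-\tau}\Con\YY^k+\pr{\alp-\gam}\Con\ZZ^k+\tau\Con\UU^k+\gam\Con\ZZ^{k+1}$, so $\mt{\Con\YY^{k+1}}^2$ must be expanded through $\Con\ZZ^{k+1}$, whose recursion from~\eqref{eq:Z} is $\Con\ZZ^{k+1}=\pr{1+\bet}^{-1}\Con\WW\pr{\Con\ZZ^k+\bet\Con\XX^k-\eta\Con\GG^k-\eta\zeta^k\Con\pr{\gF{\XX^k}-\gF{\QQ^k}}}$. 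Replacing $\zeta^k$ by its mean and isolating the fluctuation $\eta\pr{\zeta^k-1}\Con\pr{\gF{\XX^k}-\gF{\QQ^k}}$, then splitting the summands with~\eqref{eq:Hilbertsq} and applying the $\pr{1-\gap}$-contraction, the mean block's gradient-difference part contributes the $\frac{\eta^2}{\gap}$ term and the fluctuation the $\frac{\eta^2}{q}$ term, together forming the bracket $\frac{384\pr{1-\tau}\gam^2}{\tau^2\gap}\pr{\frac{\eta^2}{q}+\frac{2\eta^2}{\gap}}$ of $\pb$ once the $\gam\Con\ZZ^{k+1}$ and $\ZZ$-weight channels are both accounted for; the mean block's $-\eta\Con\GG^k$ part instead produces an $\eta^2 L^2$-scaled $\mt{\Con\GG^k}^2$ cross-term. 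Wherever $\mt{\Con\XX^k}^2$ reappears on a right-hand side, I expand it by convexity of $\mt{\cdot}^2$ as $\pr{1-\alp-\tau}\mt{\Con\YY^k}^2+\alp\mt{\Con\ZZ^k}^2+\tau\mt{\Con\UU^k}^2$, since the coefficients are nonnegative and sum to one.

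For the assembly, I form $\Ek{\Ly^{k+1}}$ as the weighted sum of the five recursions, expand every stray $\mt{\Con\XX^k}^2$, and collect the coefficients of $\mt{\Con\QQ^k}^2,\mt{\Con\YY^k}^2,\mt{\Con\UU^k}^2,\mt{\Con\ZZ^k}^2,\mt{\Con\GG^k}^2$ and $\mt{\gF{\XX^k}-\gF{\QQ^k}}^2$. The statement then reduces to verifying that each collected self-coefficient is at most $\pr{1-\delta}$ times the corresponding $\Ly^k$-weight, that the leftover $\mt{\Con\XX^k}^2$ matches the left-hand side exactly, and that the gradient-difference coefficients sum to $\pb$. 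This is exactly where~\eqref{eq:Lyparastp} enters: the first inequality caps the aggregate $\mt{\Con\YY^k}^2$-coefficient (its four summands being the $\YY$ self-contraction $\frac{2\pr{1-\tau}}{2-\tau}$, the $\bet\Con\XX^k$-through-$\ZZ$ feedthrough $\frac{192\pr{1-\tau}\gam^2\bet^2}{\tau\gap^2}$, the snapshot feedthrough $\frac{\tau}{4}$, and the $\UU$ feedthrough $\frac{16p}{\gap}$) below $1-\frac{\tau}{8}$; the second, $\frac{\alp^2}{\pr{1-\tau}\gam^2}\le1$, balances the $\alp\Con\ZZ^k$ feedthrough against the $\gam^2$-weighted $\ZZ$ budget; and the third caps the $\eta^2 L^2$-scaled $\Con\GG^k$ cross-term within the slack of the $\GG$ channel. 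The main obstacle is precisely this coupled, constant-exact bookkeeping: one must choose the internal split parameters in each use of~\eqref{eq:Hilbert}/\eqref{eq:Hilbertsq} (e.g.\ $\la=1-\gap$ for the contraction terms, and matched splits along the $\YY$--$\ZZ$ chain) so that, after expanding $\mt{\Con\XX^k}^2$ and summing all channels, every coefficient lands inside the budget dictated by~\eqref{eq:Lyparastp}; the constants must be tracked exactly rather than merely up to order, which is where the bulk of the work lies.
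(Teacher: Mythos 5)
Your overall architecture is the same as the paper's: left-multiply each update by $\Con$, use $\Con\WW=\Con\WW\Con$ and $\nb{\Con\WW}=1-\gap$, handle $\xi^k,\zeta^k$ under $\Ek{\cdot}$ channel by channel, and assemble the five recursions with the Lyapunov weights. The $\QQ$, $\UU$, $\GG$ and $\ZZ$ recursions you sketch are essentially the paper's \eqref{eq:ConQ}, \eqref{eq:ConU}, \eqref{eq:ConG}, \eqref{eq:ConZ}. However, there is a concrete step at which your plan fails: expanding every occurrence of $\mt{\Con\XX^k}^2$ by plain convexity, $\mt{\Con\XX^k}^2\leq\pr{1-\alp-\tau}\mt{\Con\YY^k}^2+\alp\mt{\Con\ZZ^k}^2+\tau\mt{\Con\UU^k}^2$, puts a coefficient of order $\alp$ on $\mt{\Con\ZZ^k}^2$. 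But the $\ZZ$-entry of $\Ly^k$ has weight of order $\frac{\gam^2}{\gap}$, and after the unavoidable contributions — the self term $\pr{1-\gap}\cdot\frac{48\pr{1-\tau}\gam^2}{\tau\gap}$ from \eqref{eq:ConZ} and the feedthrough $\frac{8\pr{1-\tau}\gam^2}{\tau}$ from the $\YY$-channel \eqref{eq:ConY} — the slack left in the $\mt{\Con\ZZ^k}^2$ budget is only $O\pr{\gam^2}$ per step. Your bookkeeping therefore needs $\alp\lesssim\gam^2$, which is violated in exactly the regime the lemma is built for: under the admissible choices \eqref{eq:parastpex1} ($\tau=\frac{1}{2}$, $\alp=O\pr{\frac{1}{\sqrt{\ka}}}$, $\gam\approx 2\alp$) this forces $\ka=O(1)$. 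The paper avoids this precisely by \emph{not} using Jensen: it applies the weighted splitting \eqref{eq:Hilbertsq} with weights $\pr{\frac{4-\tau}{4},\frac{\tau}{8},\frac{\tau}{8}}$ to get \eqref{eq:ConX}, whose $\ZZ$-coefficient is $\frac{8\alp^2}{\tau}$ — \emph{quadratic} in $\alp$ — and the second condition of \eqref{eq:Lyparastp}, $\frac{\alp^2}{\pr{1-\tau}\gam^2}\leq 1$, is shaped exactly to fit this quadratic term inside the $O\pr{\gam^2}$ slack. With a linear-in-$\alp$ coefficient, the hypotheses \eqref{eq:Lyparastp} cannot close the comparison, so the lemma as stated is not proved by your route.

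Two secondary points confirm the same structural misreading. First, the four summands in the first condition of \eqref{eq:Lyparastp} are not components of an aggregate $\mt{\Con\YY^k}^2$-coefficient; they are the feedthrough coefficients of $\mt{\Con\XX^k}^2$ into the $\YY^{k+1}$, $\ZZ^{k+1}$, $\QQ^{k+1}$, $\UU^{k+1}$ channels respectively (note $\mt{\Con\YY^{k+1}}^2$ has no $\mt{\Con\YY^k}^2$ self-term at all; $\mt{\Con\YY^k}^2$ enters the right-hand side only through the expansion of $\mt{\Con\XX^k}^2$). In the paper, $\mt{\Con\XX^k}^2$ is kept intact on both sides: \eqref{eq:ConX} contributes it to the left-hand side with weight $1$, the first condition caps its total right-hand-side coefficient by $1-\frac{\tau}{8}$, and the surplus $\frac{\tau}{8}\mt{\Con\XX^k}^2$ (together with $\frac{\tau}{8}\mt{\Con\QQ^k}^2$ extracted from the $\QQ$ self-term using $\min\dr{\frac{p}{2},\tau,\frac{\gap}{2}}\leq\frac{p}{2}$) becomes, after scaling by $\frac{8}{\tau}$, the left-hand side of \eqref{eq:LyConX}. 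Second, your claim that one copy of the $\QQ$-channel feedthrough $\frac{2}{p}\cdot p\,\mt{\Con\XX^k}^2$ ``supplies'' the $\mt{\Con\XX^k}^2$ on the left-hand side is backwards: that term sits inside the upper bound of $\Ek{\Ly^{k+1}}$, i.e.\ on the same side as the quantity you must dominate, so it is an additional cost of size $2\mt{\Con\XX^k}^2$, not a source for the target term.
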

\begin{proof}
	By multiplying $\Con$ on both sides of~\eqref{eq:X}, and using~\eqref{eq:Hilbertsq},
	we have
	\eql{\label{eq:ConX}}{
		\mt{\Con\XX^k}^2 &\leq \frac{4}{4 - \tau  }\mt{\pr{1 - \alp - \tau}\Con\YY^k}^2 + \frac{8}{\tau}\mt{\alp\Con\ZZ^k}^2 + \frac{8}{\tau}\mt{\tau\Con\UU^k}^2  \\
		&\leq \frac{4\pr{1 - \tau}^2}{4 - \tau}\mt{\Con\YY^k}^2 + \frac{8\alp^2}{\tau}\mt{\Con\ZZ^k}^2 + 8\tau\mt{\Con\UU^k}^2.
	}
	
	To bound the consensus error of $\ZZ^{k+1}$, we first analyze the following term:
	\eql{\label{eq:ConYsp1}}{
		&\Ek{\mt{\Con\ZZ^k + \bet\Con\XX^k - \eta\Con\GG^k + \zeta^k\eta\Con\pr{\gF{\QQ^k} - \gF{\XX^k}}}^2}  \\
		=& \mt{\Con\ZZ^k + \bet\Con\XX^k - \eta\Con\GG^k}^2 + \Ek{\pr{\zeta^k}^2}\eta^2\mt{\Con\pr{\gF{\QQ^k} - \gF{\XX^k}}}^2  \\
		& + 2\jr{\Con\ZZ^k + \bet\Con\XX^k - \eta\Con\GG^k, \eta\Ek{\zeta^k}\Con\pr{\gF{\QQ^k} - \gF{\XX^k}}}  \\
		\leq& \mt{\Con\ZZ^k + \bet\Con\XX^k - \eta\Con\GG^k}^2 + \frac{\eta^2}{q}\nb{\Con}^2\mt{\gF{\QQ^k} - \gF{\XX^k}}^2  \\
		& + \frac{\gap}{2}\mt{\Con\ZZ^k + \bet\Con\XX^k - \eta\Con\GG^k}^2 + \frac{2\eta^2}{\gap}\nb{\Con}^2\mt{\gF{\QQ^k} - \gF{\XX^k}}^2  \\
		\comleq{\eqref{eq:Hilbertsq}}& \pr{1 + \frac{\gap}{2}}\pr{\frac{1}{1 - \frac{\gap}{2}}\mt{\Con\ZZ^k}^2 + \frac{4\bet^2}{\gap}\mt{\Con\XX^k}^2 + \frac{4\eta^2}{\gap}\mt{\Con\GG^k}^2}  \\
		& + \pr{\frac{\eta^2}{q} + \frac{2\eta^2}{\gap}}\mt{\gF{\QQ^k} - \gF{\XX^k}}^2.
	}
	
	By multiplying $\Con$ on both sides of~\eqref{eq:Z}, we have
	\eql{\label{eq:ConZ}}{
		&\Ek{\mt{\Con\ZZ^{k+1}}^2}  \\
        \leq& \pr{1 + \bet}^{-2}\nb{\Con\WW}^2\Ek{\mt{\Con\ZZ^k + \bet\Con\XX^k - \eta\Con\GG^k + \zeta^k\eta\Con\pr{\gF{\QQ^k} - \gF{\XX^k}}}^2}  \\
		%\comleq{\eqref{eq:ConYsp1}}& \pr{1 - \gap}^2\pr{1 + \frac{\gap}{2}}\mt{\Con\ZZ^k + \bet\Con\XX^k - \eta\Con\GG^k}^2
		% + \pr{\frac{\eta^2}{q} + \frac{2\eta^2}{\gap}}\nb{\Con}^2\mt{\gF{\QQ^k} - \gF{\XX^k}}^2  \\
		%\comleq{\eqref{eq:Hilbert}}& \pr{1 - \gap}^2\pr{1 + \frac{\gap}{2}}\pr{\frac{1}{1 - \frac{\gap}{2}}\mt{\Con\ZZ^k}^2 + \frac{2\bet^2    }{\gap}\mt{\Con\XX^k}^2 + \frac{2\eta^2}{\gap}\mt{\Con\GG^k}^2}  \\
		%& + \pr{\frac{\eta^2}{q} + \frac{2\eta^2}{\gap}}\mt{\gF{\QQ^k} - \gF{\XX^k}}^2  \\
		\comleq{\eqref{eq:ConYsp1}}& \pr{1 + \bet}^{-2}\pr{1 - \gap}^2\pr{1 + \frac{\gap}{2}}\pr{\frac{1}{1 - \frac{\gap}{2}}\mt{\Con\ZZ^k}^2 + \frac{4\bet^2}{\gap}\mt{\Con\XX^k}^2 + \frac{4\eta^2}{\gap}\mt{\Con\GG^k}^2}  \\
		& + \pr{1 + \bet}^{-2}\pr{1 - \gap}^2\pr{\frac{\eta^2}{q} + \frac{2\eta^2}{\gap}}\mt{\gF{\QQ^k} - \gF{\XX^k}}^2  \\
		\leq& \pr{1 - \gap}\mt{\Con\ZZ^k}^2 + \frac{4\bet^2    }{\gap}\mt{\Con\XX^k}^2 + \frac{4\eta^2}{\gap}\mt{\Con\GG^k}^2  \\
		& + \pr{\frac{\eta^2}{q} + \frac{2\eta^2}{\gap}}\mt{\gF{\QQ^k} - \gF{\XX^k}}^2.
	}
	
	Again, by multiplying $\Con$ on both sides of~\eqref{eq:Y}, and using~\eqref{eq:Hilbert}, we have the following inequality
	\eql{\label{eq:ConY}}{
		\mt{\Con\YY^{k+1}}^2    %= \mt{{\pr{1 - \alp - \tau}\Con\YY^k + \gam\Con\ZZ^{k+1} + \tau\Con\UU^k}}^2 \\
		%\leq& \nb{\Con\WW}^2\mt{\pr{1 - \alp - \tau}\Con\YY^k + \gam\Con\ZZ^{k+1} + \tau\Con\UU^k}^2  \\
		\comleq{\eqref{eq:Hilbert}}& \frac{4 - \tau}{4 - 2\tau}\mt{\Con\XX^k}^2 + \frac{4 - \tau}{\tau}\mt{\gam\pr{\Con\ZZ^{k+1} - \Con\ZZ^k}}^2  \\
		\leq& \frac{4 - \tau}{4 - 2\tau}\mt{\Con\XX^k}^2 + \frac{2\pr{4 - \tau}\gam^2}{\tau}\pr{\mt{\Con\ZZ^{k+1}}^2 + \mt{\Con\ZZ^k}^2}.
	}
	
	From the definition of $\xi^k$, it follows that
	\eql{\label{eq:ConU}}{
		&\Ek{\mt{\Con\UU^{k+1}}^2}
		= \pr{1 - p} \mt{\Con\WW\Con\UU^k}^2 + p \mt{\Con\WW\Con\XX^k}^2  \\
		\leq& \pr{1 - p}\nb{\Con\WW}^2\mt{\Con\UU^k}^2 + p\nb{\Con\WW}^2\mt{\Con\XX^k}^2  \\
		\leq& \pr{1 - \gap}^2\mt{\Con\UU^k}^2 + p\mt{\Con\XX^k}^2.
	}
	Also, by the definition of $\xi^k$,
	\eql{\label{eq:ConQ}}{
		&\Ek{\mt{\Con\QQ^{k+1}}^2} = \pr{1 - p}\mt{\Con\QQ^k}^2 + p\mt{\Con\XX^k}^2.
	}
	
	To bound the consensus error of $\GG^{k+1}$, we have
	\eql{\label{eq:ConG}}{
		&\Ek{\mt{\Con\GG^{k+1}}^2} \\
    =& \pr{1 - p}\mt{\Con\WW\Con\GG^k}^2 + p \mt{\Con\WW\Con\GG^k + \Con\pr{\gF{\XX^k} - \gF{\QQ^k}}}^2  \\
		\leq& \pr{1 - p}\nb{\Con\WW}^2\mt{\Con\GG^k}^2 + 2p\nb{\Con\WW}^2\mt{\Con\GG^k}^2 + 2p\nb{\Con}^2\mt{\gF{\XX^k} - \gF{\QQ^k}}^2  \\
		\leq& \pr{1 - \gap}\mt{\Con\GG^k}^2 + 2p\mt{\gF{\XX^k} - \gF{\QQ^k}}^2,
		%\comleq{\eqref{eq:diffgF}}& \pr{1 - \gap}\mt{\Con\GG^k}^2 + 8pLn\pr{f\pr{\ua^k} - f\pr{\xa^k} - \jr{\da^k, \ua^k - \xa^k}} + 4pL^2\pr{\mt{\Con\XX^k}^2 + \mt{\Con\UU^k}^2},
	}
 {where the second inequality holds because of $\pr{1 + p}\pr{1 - \gap}^2 \leq 1 - \gap$, which can be derived from the first condition in~\eqref{eq:Lyparastp}. }
%	where we used the relation $\pr{1 + p}\pr{1 - \gap}^2 \leq 1 - \gap$ derived from~\eqref{eq:Lyparastp} in the second inequality.
	
	Taking weighted sums on both sides of~\eqref{eq:ConX},~\eqref{eq:ConZ},~\eqref{eq:ConY},~\eqref{eq:ConU},\eqref{eq:ConQ},~\eqref{eq:ConG} yields
	\eq{
		&  \mt{\Con\XX^k}^2 + { \mathbb{E}_k\Big[}\frac{\tau}{4p}{\mt{\Con\QQ^{k+1}}^2
		 + \frac{4\pr{1 - \tau}}{4 - \tau}\mt{\Con\YY^{k+1}}^2}{ \Big] }  \\
        & + \Ek{\frac{16}{\gap}\mt{\Con\UU^{k+1}}^2 + \frac{48\pr{1 - \tau}\gam^2}{\tau\gap}\mt{\Con\ZZ^{k+1}}^2 + \frac{1}{\gap  L^2}\mt{\Con\GG^{k+1}}^2}  \\
		\leq& \pr{\frac{2\pr{1 - \tau}}{2 - \tau} + \frac{192\pr{1 - \tau}\gam^2\bet^2}{\tau\gap^2} + \frac{\tau}{4} + \frac{16p}{\gap}}\mt{\Con\XX^k}^2  \\
		& + \pr{1 - p}\frac{\tau}{4p}\mt{\Con\QQ^k}^2
		+ \frac{4\pr{1 - \tau}^2}{4 - \tau}\mt{\Con\YY^k}^2  \\
		& + \pr{1 - 2\gap + \gap^2 + \frac{\tau\gap}{2}}\frac{16}{\gap}\mt{\Con\UU^k}^2
		+ \pr{1 - \gap + \frac{\gap}{6} + \frac{\alp^2\gap}{6\pr{1 - \tau}\gam^2}}\frac{48\pr{1 - \tau}\gam^2}{\tau\gap}\mt{\Con\ZZ^k}^2  \\
		& + \frac{8\pr{1 - \tau}\gam^2}{\tau}\Ek{\mt{\Con\ZZ^{k+1}}^2}
		+ \pr{1 - \gap       + \frac{192\pr{1 - \tau}\gam^2\eta^2  L^2}{\tau\gap}}\frac{1}{\gap  L^2}\mt{\Con\GG^k}^2  \\
		& + \pa\mt{\gF{\XX^k} - \gF{\QQ^k}}^2  \\
		\comleq{\eqref{eq:Lyparastp}}& \pr{1 - \frac{\tau}{8}}\mt{\Con\XX^k}^2
		+ \pr{1 - p}\frac{\tau}{4p}\mt{\Con\QQ^k}^2
		+ \frac{4\pr{1 - \tau}^2}{4 - \tau}\mt{\Con\YY^k}^2
		+ \pr{1 - \frac{\gap}{2}}\frac{16}{\gap}\mt{\Con\UU^k}^2  \\
		& + \pr{1 - \frac{2\gap}{3}}\frac{48\pr{1 - \tau}\gam^2}{\tau\gap}\mt{\Con\ZZ^k}^2 + \frac{8\pr{1 - \tau}\gam^2}{\tau}\Ek{\mt{\Con\ZZ^{k+1}}^2}  \\
		& + \pr{1 - \frac{\gap}{2}}\frac{1}{\gap  L^2}\mt{\Con\GG^k}^2
		+ \pa\mt{\gF{\XX^k} - \gF{\QQ^k}}^2,
	}
	where
	\eq{
		\pa = \frac{48\pr{1 - \tau}\gam^2}{\tau\gap}\pr{\frac{\eta^2}{q} + \frac{2\eta^2}{\gap}} + \frac{2p}{\gap  L^2}.
	}
	Rearranging the above relation yields
	\eq{
		&  \frac{\tau}{8}\pr{\mt{\Con\XX^k}^2 + \mt{\Con\QQ^k}^2} + \frac{\tau}{4p}\Ek{\mt{\Con\QQ^{k+1}}^2
		 + \frac{4\pr{1 - \tau}}{4 - \tau}\mt{\Con\YY^{k+1}}^2} \\
        & + \Ek{\frac{16}{\gap}\mt{\Con\UU^{k+1}}^2 + \pr{1 - \frac{\gap}{6}}\frac{48\pr{1 - \tau}\gam^2}{\tau\gap}\mt{\Con\ZZ^{k+1}}^2 + \frac{1}{\gap  L^2}\mt{\Con\GG^{k+1}}^2} \\
		\leq&   \pr{1 - \frac{p}{2}}\frac{\tau}{4p}\mt{\Con\QQ^{k}}^2 +  \frac{4\pr{1 - \tau}^2}{4 - \tau}\mt{\Con\YY^{k}}^2  \\
        & + \pr{1 - \frac{\gap}{2}}\frac{16}{\gap}\mt{\Con\UU^{k}}^2 + \pr{1 - \frac{2\gap}{3}}\frac{48\pr{1 - \tau}\gam^2}{\tau\gap}\mt{\Con\ZZ^{k}}^2  \\
		& + \pr{1 - \frac{\gap}{2}}\frac{1}{\gap  L^2}\mt{\Con\GG^{k}}^2 + \pa\mt{\gF{\XX^k} - \gF{\QQ^k}}^2  \\
		\leq& \pr{1 - \min\dr{\frac{p}{2}, \tau, \frac{\gap}{2}}}{\Big(} \frac{\tau}{4p}\mt{\Con\QQ^k}^2    + \frac{4\pr{1 - \tau}}{4 - \tau}\mt{\Con\YY^{k}}^2 + \frac{16}{\gap}\mt{\Con\UU^{k}}^2  \\
		&+ \pr{1 - \frac{\gap}{6}}\frac{48\pr{1 - \tau}\gam^2}{\tau\gap}\mt{\Con\ZZ^{k}}^2
		+ \frac{1}{\gap  L^2}\mt{\Con\GG^{k}}^2{\Big)}  \\
		& + \pa\mt{\gF{\XX^k} - \gF{\QQ^k}}^2,
	}
	%Using the fact that $\pr{1 - \frac{2\gap}{3}} \leq \pr{1 - \frac{\gap}{6}}\pr{1 - \frac{\gap}{2}}$ and
	which leads to~\eqref{eq:LyConX}.
\end{proof}
Lemma~\ref{lem:conbd1} can also be understood intuitively in another way.
Since $\Ly^k$ is the weighted sum of consensus errors, inequality~\eqref{eq:LyConX} also indicates that
the weighted  sum of consensus errors is a ``Q-linear" sequence with ``additional errors" in term of $\pb\mt{\gF{\XX^k} - \gF{\QQ^k}}^2$.
% where $\pb$ is specified in~\eqref{eq:defpb1}.
 By the magnitudes of the parameters and stepsize mentioned in~\eqref{eq:guideparastp1},
 we have \eq{\pb = O\pr{\frac{\gam^2\eta^2}{\gap^2} + \frac{\gam^2\eta^2}{q\gap} + \frac{p}{\gap L^2}} = O\pr{\frac{1}{L^2}}.  }

\subsection{Convergence rate of \SSGT\ }\label{sec:SSGTrate}
In this section, we prove the complexities of \SSGT.
To begin with, we bound the gradient difference $\mt{\gF{\XX^k} - \gF{\QQ^k}}$, which occurs on the RHS of~\eqref{eq:LyConX} and~\eqref{eq:dajrz+1}.
\begin{lemma}\label{lem:diffgF12}
	\eql{\label{eq:diffgF}}{
		\mt{\gF{\XX^k} - \gF{\QQ^k}}^2 \leq& 4Ln\pr{f\pr{\ua^k} - f\pr{\xa^k} - \jr{\da^k, \ua^k - \xa^k}} \\
 & + 2L^2\pr{\mt{\Con\XX^k}^2 + \mt{\Con\QQ^k}^2}.
	}
	
\end{lemma}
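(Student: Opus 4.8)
The plan is to prove the inequality agent-by-agent and then sum, exploiting the fact that both sides decompose over $\calN$: the left-hand side is $\mt{\gF{\XX^k} - \gF{\QQ^k}}^2 = \sum_{i\in\calN}\nt{\na f_i(\xx_i^k) - \na f_i(\qq_i^k)}^2$, while on the right $\sum_{i\in\calN}\nt{\xx_i^k - \xa^k}^2 = \mt{\Con\XX^k}^2$ and $\sum_{i\in\calN}\nt{\qq_i^k - \qa^k}^2 = \mt{\Con\QQ^k}^2$. First I would route each row's gradient difference through the \emph{averaged} snapshot point $\qa^k$, writing $\na f_i(\xx_i^k) - \na f_i(\qq_i^k) = \br{\na f_i(\xx_i^k) - \na f_i(\qa^k)} + \br{\na f_i(\qa^k) - \na f_i(\qq_i^k)}$ and applying $\nt{\aa + \bb}^2 \le 2\nt{\aa}^2 + 2\nt{\bb}^2$ (the $\la = \tfrac12$ case of~\eqref{eq:Hilbert}).

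The second piece is routine: the $L$-smoothness in Assumption~\ref{assp:f} gives the gradient-Lipschitz bound $\nt{\na f_i(\qa^k) - \na f_i(\qq_i^k)}^2 \le L^2\nt{\qa^k - \qq_i^k}^2$, and summing over $i$ produces the term $2L^2\mt{\Con\QQ^k}^2$. The first piece is where the Bregman gap must be generated, and I expect this regrouping to be the main obstacle: the statement uses $\da^k = \frac1n\one\tp\gF{\XX^k}$ (the \emph{average of the gradients at the scattered iterates}), not $\na f(\xa^k)$, so the naive choice of $\na f_i(\xa^k)$ as linearization point does not match. The fix is to linearize at $\na f_i(\xx_i^k)$ instead: the co-coercivity inequality for convex $L$-smooth functions (a consequence of Assumption~\ref{assp:f}) gives $\nt{\na f_i(\qa^k) - \na f_i(\xx_i^k)}^2 \le 2L\pr{f_i(\qa^k) - f_i(\xx_i^k) - \jr{\na f_i(\xx_i^k), \qa^k - \xx_i^k}}$, and then I would split the right-hand gap as $\br{f_i(\qa^k) - f_i(\xa^k) - \jr{\na f_i(\xx_i^k), \qa^k - \xa^k}} + \br{f_i(\xa^k) - f_i(\xx_i^k) - \jr{\na f_i(\xx_i^k), \xa^k - \xx_i^k}}$. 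The second bracket is at most $\frac{L}{2}\nt{\xa^k - \xx_i^k}^2$ by the smoothness upper bound in Assumption~\ref{assp:f}, so this piece contributes a copy of the target gap plus a multiple of $\nt{\xa^k - \xx_i^k}^2$.

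Finally I would combine: $2\nt{\na f_i(\xx_i^k) - \na f_i(\qa^k)}^2 \le 4L\pr{f_i(\qa^k) - f_i(\xa^k) - \jr{\na f_i(\xx_i^k), \qa^k - \xa^k}} + 2L^2\nt{\xa^k - \xx_i^k}^2$, add the second piece, and sum over $i\in\calN$. Summing the gap turns $\frac1n\sum_{i}\na f_i(\xx_i^k)$ into $\da^k$ and yields exactly $4Ln\pr{f(\qa^k) - f(\xa^k) - \jr{\da^k, \qa^k - \xa^k}}$, while the two consensus residuals assemble into $2L^2\pr{\mt{\Con\XX^k}^2 + \mt{\Con\QQ^k}^2}$. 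Substituting $\qa^k = \ua^k$ from~\eqref{eq:efctua1} then gives precisely the claimed bound. Everything except the gap-regrouping step is a direct application of the two halves of Assumption~\ref{assp:f}, so the argument should be short once that algebraic identity is in place.
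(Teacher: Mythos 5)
Your proposal is correct and follows essentially the same route as the paper's proof: the paper also splits the gradient difference through the averaged snapshot point (writing it as $\ua^k$, which equals your $\qa^k$ by~\eqref{eq:efctua1}), applies the same co-coercivity bound linearized at $\xx_i^k$, performs the identical Bregman-gap regrouping with the smoothness remainder $\frac{L}{2}\nt{\xa^k - \xx_i^k}^2$, and sums over $i\in\calN$ to produce $\da^k$. The only cosmetic difference is that the paper does the initial two-term decomposition at the matrix level rather than row by row.
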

\begin{proof}
	First, we decompose
	\eq{
		&\mt{\gF{\XX^k} - \gF{\QQ^k}}^2  \\
		=& \mt{\gF{\XX^k} - \gF{\one\ua^k} + \gF{\one\ua^k} - \gF{\QQ^k}}^2  \\
		\leq& 2\mt{\gF{\XX^k} - \gF{\one\ua^k}}^2 + 2\mt{\gF{\one\ua^k} - \gF{\QQ^k}}^2  \\
		\leq& 2\mt{\gF{\XX^k} - \gF{\one\ua^k}}^2 + 2L^2\mt{\Con\QQ^k}^2,
	}
	where we used the $L$-smoothness of $\dr{f_i}$ and~\eqref{eq:efctua1} in the last inequality.
	
	For any $i\in\calN$,
	\eq{
		&\frac{1}{2L}\nt{\na f_i\pr{\xx^k_i} - \na f_i\pr{\ua^k}}^2 \leq f_i\pr{\ua^k} - f_i\pr{\xx^k_i} - \jr{\na f_i\pr{\xx^k_i}, \ua^k - \xx^k_i}  \\
		=& f_i\pr{\ua^k} - f_i\pr{\xa^k} - \jr{\na f_i\pr{\xx^k_i}, \ua^k - \xa^k}  \\
		& + f_i\pr{\xa^k} - f_i\pr{\xx^k_i} - \jr{\na f_i\pr{\xx^k_i}, \xa^k - \xx^k_i}  \\
		\leq& f_i\pr{\ua^k} - f_i\pr{\xa^k} - \jr{\na f_i\pr{\xx^k_i}, \ua^k - \xa^k} + \frac{L}{2}\mt{\xa^k - \xx^k_i}^2,
	}
	where the first inequality comes from the $L$-smoothness of $f_i$ and Lemma 3.5 of the textbook~\cite{bubeck2015convex}, and
	the last inequality is due to the $L$-smoothness of $f_i$.
	
	Taking average over $i\in \calN$ yields
	\eq{
		&\frac{1}{2Ln}\mt{\gF{\XX^k} - \gF{\one\ua^k}}^2 \leq f\pr{\ua^k} - f\pr{\xa^k} - \jr{\da^k, \ua^k - \xa^k} + \frac{L}{2n}\mt{\Con\XX^k}^2,
	}
	which completes the proof.
\end{proof}

In what follows, the term $f\pr{\ua^k} - f\pr{\xa^k}$ on the RHS of~\eqref{eq:diffgF} is bounded in Lemma~\ref{lem:LyU}.
The inequality~\eqref{eq:dajrxz} in Lemma~\ref{lem:dajrsupp} deals with the term $\jr{\da^k, \xa^k - \ua^k}$ on the RHS of~\eqref{eq:diffgF} due to the existence of the negative momentum $\tau\UU^k$.
The remaining term $\mt{\Con\XX^k}^2 + \mt{\Con\QQ^k}^2$ on the RHS of~\eqref{eq:diffgF} can be bounded by~\eqref{eq:LyConX}.
\begin{lemma}\label{lem:LyU}
	\eql{\label{eq:LyU}}{
		\frac{\tau}{\alp}\pr{f\pr{\ua^k} - f\pr{\xa^k}} \leq \pr{1 - \pra}\LyU^k - \Ek{\LyU^{k+1}} + {\frac{1}{2}}\pr{f\pr{\xa^k} - f\pr{\xx^*}},
	}
	where the Lyapunov function
	\eq{
		\LyU^k = \frac{2\tau + \alp}{2\alp p}\pr{f\pr{\ua^k} - f\pr{\xx^*}}
	}
	and
	\eql{\label{eq:defpra1}}{
		\pra = \frac{\alp p}{2\tau + \alp}.
	}
	
\end{lemma}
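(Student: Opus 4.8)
The plan is to prove \eqref{eq:LyU} by a direct computation, which will in fact yield equality rather than a strict inequality. The key observation is that the snapshot average $\ua^{k+1}$ is a two-point random variable. Since $\xa^k$ and $\ua^k$ are $\calF_k$-measurable while $\xi^k\sim Bernoulli\pr{p}$ is independent of $\calF_k$, the update \eqref{eq:ua} gives $\ua^{k+1}=\xa^k$ when $\xi^k=1$ and $\ua^{k+1}=\ua^k$ when $\xi^k=0$. Crucially, $\ua^{k+1}$ equals one of these two \emph{exact} values (not their mixture), so $f\pr{\ua^{k+1}}$ is itself a two-point random variable and no convexity/Jensen step is required; taking conditional expectation yields the exact identity
\[
\Ek{f\pr{\ua^{k+1}}} = p\, f\pr{\xa^k} + \pr{1-p}\,f\pr{\ua^k}.
\]

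Substituting this into the definition of $\LyU^{k+1}$, I obtain
\[
\Ek{\LyU^{k+1}} = \frac{2\tau+\alp}{2\alp p}\pr{p\,f\pr{\xa^k} + \pr{1-p}\,f\pr{\ua^k} - f\pr{\xx^*}}.
\]
Next I would compute $\pr{1-\pra}\LyU^k - \Ek{\LyU^{k+1}}$, using $\pra=\frac{\alp p}{2\tau+\alp}$ so that $1-\pra=\frac{2\tau+\alp-\alp p}{2\tau+\alp}$. Abbreviating $A:=f\pr{\ua^k}-f\pr{\xx^*}$ and $B:=f\pr{\xa^k}-f\pr{\xx^*}$, the factor $\frac{2\tau+\alp}{2\alp p}$ cancels cleanly: the coefficient of $A$ collapses to $\frac{\tau}{\alp}$ and the coefficient of $B$ to $-\frac{2\tau+\alp}{2\alp}$, giving
\[
\pr{1-\pra}\LyU^k - \Ek{\LyU^{k+1}} = \frac{\tau}{\alp}A - \frac{2\tau+\alp}{2\alp}B.
\]

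Finally I add the leftover term $\frac12\pr{f\pr{\xa^k}-f\pr{\xx^*}}=\frac12 B$. Since $-\frac{2\tau+\alp}{2\alp}+\frac12 = -\frac{\tau}{\alp}$, the right-hand side becomes $\frac{\tau}{\alp}A-\frac{\tau}{\alp}B = \frac{\tau}{\alp}\pr{f\pr{\ua^k}-f\pr{\xa^k}}$, which is exactly the left-hand side of \eqref{eq:LyU}; thus the claimed bound holds, in fact with equality. The main point to note is that there is no genuine obstacle here: the argument is pure bookkeeping, and the only place demanding care is recognizing the two-point structure of $\ua^{k+1}$ (so the expectation splits exactly) and tracking the algebra so that the cross terms telescope, with the extra term $\frac12\pr{f\pr{\xa^k}-f\pr{\xx^*}}$ serving precisely to absorb the residual multiple of $B$.
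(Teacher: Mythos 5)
Your proof is correct and follows essentially the same route as the paper's: both rest on the exact identity $\Ek{\LyU^{k+1}} = \pr{1-p}\LyU^k + \frac{2\tau+\alp}{2\alp}\pr{f\pr{\xa^k}-f\pr{\xx^*}}$ coming from the two-point distribution of $\xi^k$ (no Jensen step), followed by the same algebra with $\pra = \frac{\alp p}{2\tau+\alp}$, and indeed the paper's proof also yields~\eqref{eq:LyU} as an equality that it then states as an inequality.
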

\begin{proof}
	Denote $c = \frac{2\tau + \alp}{2\alp p}$ for simplicity.
	By the distribution of $\xi^k$ and~\eqref{eq:ua},
	\eql{\label{eq:LyUrecur1}}{
		\Ek{\LyU^{k+1}} = \pr{1 - p}\LyU^k + c p \pr{f\pr{\xa^k} - f\pr{\xx^*}}.
	}
	Thus,
	\eq{
		&\frac{\tau}{\alp}\pr{f\pr{\ua^k} - f\pr{\xx^*}} = \frac{\tau}{c\alp}\LyU^k  \\
    \comeq{\eqref{eq:LyUrecur1}}& \frac{\tau}{c\alp\pr{p - \pra}}\pr{\pr{1 - \pra}\LyU^k - \Ek{\LyU^{k+1}} + c p \pr{f\pr{\xa^k} - f\pr{\xx^*}}}  \\
		=& \pr{1 - \pra}\LyU^k - \Ek{\LyU^{k+1}} + \pr{\frac{\tau}{\alp} + \frac{1}{2}}\pr{f\pr{\xa^k} - f\pr{\xx^*}}.
	}
	Rearranging the above equation yields~\eqref{eq:LyU}.
\end{proof}

The following supporting lemma is from an adaptation of some standard steps in the proofs of Katyusha~\cite{allen2017katyusha} or Loopless Katyusha~\cite{kovalev2020don} to our methods.
We attach its proof in Appendix~\ref{sec:prflemdajrsupp}.

\begin{lemma}\label{lem:dajrsupp}
	The following inequalities hold.
	\begin{align}
		&\bullet \jr{\da^k, \xa^k - \za^k}
		\leq \frac{1 - \alp - \tau}{\alp}\pr{f\pr{\ya^k} - f\pr{\xa^k}} + \frac{\tau}{\alp}\jr{\da^k, \ua^k - \xa^k} \notag \\
        &\qquad\qquad\qquad\qquad + \frac{\pr{1 - \alp - \tau}L}{\alp n}\mt{\Con\XX^k}^2,
		\label{eq:dajrxz} \\
		&\bullet \Ek{\jr{\ga^k + \zeta^k\pr{\da^k - \ga^k}, \za^k - \za^{k+1}}}
		\leq \frac{1}{\gam}\Ek{f\pr{\xa^k} - f\pr{\ya^{k+1}}} + \frac{L}{n\gam}\mt{\Con\XX^k}^2 \notag \\
        & \qquad\qquad\qquad\qquad + \pr{L\gam + \frac{1}{4\eta}}\Ek{\nt{\za^k - \za^{k+1}}^2} + \frac{\eta}{qn}\mt{\gF{\XX^k} - \gF{\QQ^k}}^2,
		\label{eq:dajrz+1} \\
		&\bullet 2\eta\jr{\ga^k + \zeta^k\pr{\da^k - \ga^k}, \za^{k+1} - \xx^*}
		\leq \nt{\za^k - \xx^*}^2 - \pr{1 + \bet}\nt{\za^{k+1} - \xx^*}^2 \notag  \\
        &\qquad\qquad\qquad\qquad\qquad\qquad\qquad\qquad\quad + \bet\nt{\xa^k - \xx^*}^2 - \nt{\za^k - \za^{k+1}}^2.
		\label{eq:dajrzaxo1}
	\end{align}
\end{lemma}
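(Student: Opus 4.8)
The plan is to verify the three bounds independently, using the linear-coupling identity~\eqref{eq:xa}, the prox-type update~\eqref{eq:za}, the momentum relation~\eqref{eq:ya}, and the inexact-gradient estimates of Lemma~\ref{lem:inextgt1}. Two of the three are essentially algebraic, while the middle one carries the probabilistic content.

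For~\eqref{eq:dajrxz}, the idea is purely algebraic. Rearranging~\eqref{eq:xa} and cancelling the coefficient of $\xa^k$ gives the key identity $\alp\pr{\xa^k - \za^k} = \pr{1 - \alp - \tau}\pr{\ya^k - \xa^k} + \tau\pr{\ua^k - \xa^k}$, so that $\jr{\da^k, \xa^k - \za^k}$ splits into $\frac{1 - \alp - \tau}{\alp}\jr{\da^k, \ya^k - \xa^k}$ plus $\frac{\tau}{\alp}\jr{\da^k, \ua^k - \xa^k}$. The second term already appears on the right-hand side, and I would bound the first by applying~\eqref{eq:inextL} with $\aa = \xa^k$, $\bb = \ya^k$, which turns $\jr{\da^k, \ya^k - \xa^k}$ into $f\pr{\ya^k} - f\pr{\xa^k} + \frac{L}{n}\mt{\Con\XX^k}^2$. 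Multiplying through by $\frac{1 - \alp - \tau}{\alp}$ yields exactly~\eqref{eq:dajrxz}.

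Inequality~\eqref{eq:dajrzaxo1} is an exact three-point identity and requires no randomness. Writing the update~\eqref{eq:za} as $\eta\pr{\ga^k + \zeta^k\pr{\da^k - \ga^k}} = \pr{\za^k - \za^{k+1}} + \bet\pr{\xa^k - \za^{k+1}}$ and taking the inner product with $\za^{k+1} - \xx^*$, I would apply the polarization identity $2\jr{a - b, b - c} = \nt{a - c}^2 - \nt{a - b}^2 - \nt{b - c}^2$ to each of the two resulting terms. This produces $\nt{\za^k - \xx^*}^2 - \pr{1 + \bet}\nt{\za^{k+1} - \xx^*}^2 + \bet\nt{\xa^k - \xx^*}^2 - \nt{\za^k - \za^{k+1}}^2$ together with an extra $-\bet\nt{\xa^k - \za^{k+1}}^2 \le 0$, which I discard to reach the claim.

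The middle bound~\eqref{eq:dajrz+1} is the substantive one and the main obstacle. Denoting the stochastic estimator $\nabla^k = \ga^k + \zeta^k\pr{\da^k - \ga^k}$, which is unbiased for $\da^k$ since $\Ek{\zeta^k} = 1$ and $\ga^k = \frac1n\one\tp\gF{\QQ^k}$ by Lemma~\ref{lem:gakm}, I would split $\jr{\nabla^k, \za^k - \za^{k+1}} = \jr{\da^k, \za^k - \za^{k+1}} + \jr{\nabla^k - \da^k, \za^k - \za^{k+1}}$. For the deterministic part I use the momentum relation~\eqref{eq:ya} in the form $\za^k - \za^{k+1} = \frac1\gam\pr{\xa^k - \ya^{k+1}}$ together with~\eqref{eq:inextL} at $\aa = \ya^{k+1}$, $\bb = \xa^k$; since $\nt{\xa^k - \ya^{k+1}}^2 = \gam^2\nt{\za^k - \za^{k+1}}^2$, this contributes the $\frac1\gam\pr{f\pr{\xa^k} - f\pr{\ya^{k+1}}}$, the $L\gam\nt{\za^k - \za^{k+1}}^2$, and the $\frac{L}{n\gam}\mt{\Con\XX^k}^2$ terms. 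The delicate point is that $\za^{k+1}$ (hence $\ya^{k+1}$) depends on $\zeta^k$, so these estimates must be read pointwise in $\zeta^k$ and only then averaged under $\Ek{\cdot}$. For the fluctuation part I would apply Young's inequality to peel off $\frac{1}{4\eta}\nt{\za^k - \za^{k+1}}^2$ and then control the variance: since $\nabla^k - \da^k = \pr{\zeta^k - 1}\pr{\da^k - \ga^k}$ with $\Ek{\pr{\zeta^k - 1}^2} = \frac{1 - q}{q} \le \frac1q$, while $\nt{\da^k - \ga^k}^2 = \frac{1}{n^2}\nt{\one\tp\pr{\gF{\XX^k} - \gF{\QQ^k}}}^2 \le \frac1n\mt{\gF{\XX^k} - \gF{\QQ^k}}^2$, the expected fluctuation is at most $\frac{\eta}{qn}\mt{\gF{\XX^k} - \gF{\QQ^k}}^2$, which is dominated by the last term $\frac{4\eta}{qn}\mt{\gF{\XX^k} - \gF{\QQ^k}}^2$ of~\eqref{eq:dajrz+1} (the constant $4$ is slack). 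Collecting the two parts gives~\eqref{eq:dajrz+1}. The real work is therefore the bookkeeping of the conditional expectation around the $\zeta^k$-dependent iterate and the conversion of the SPIDER-type estimator variance into the Frobenius-norm gradient difference.
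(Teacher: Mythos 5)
Your proposal is correct and follows essentially the same route as the paper's proof: the same rearrangement of~\eqref{eq:xa} with~\eqref{eq:inextL} for~\eqref{eq:dajrxz}, the same mean-plus-fluctuation split with~\eqref{eq:ya}, Young's inequality, and the variance bound $\Ek{\pr{\zeta^k-1}^2}\nt{\da^k - \ga^k}^2 \leq \frac{1}{qn}\mt{\gF{\XX^k} - \gF{\QQ^k}}^2$ for~\eqref{eq:dajrz+1}, and the same rewriting of~\eqref{eq:za} with polarization identities (discarding $-\bet\nt{\xa^k - \za^{k+1}}^2$) for~\eqref{eq:dajrzaxo1}. Your remark that the estimates for the deterministic part must be read pointwise in $\zeta^k$ before taking $\Ek{\cdot}$ is exactly what the paper does implicitly, and your slightly different Young split only changes unimportant constants.
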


The next supporting lemma will be used in the proof of Theorem~\ref{thm:SSGT1}. %, which is the main convergence result of \SSGT.
Lemma~\ref{lem:ratesupp2} is derived by linearly coupling~\eqref{eq:LyConX} and~\eqref{eq:diffgF}.

\begin{lemma}\label{lem:ratesupp2}
	Define
	\eq{
		\pc = \pr{\frac{1 - \alp - \tau}{\alp  n} + \frac{1}{n\gam} + \frac{1}{n}}L.
	}
	If the parameters and stepsize satisfy
	\subeqnuml{\label{eq:parastpdiffgF}}{
		\frac{2\eta L^2}{qn} \leq \frac{\pc}{2},\
		\pc\pb \leq \frac{\eta}{2qn},
		\label{eq:paraLyhalf}  \\
		\frac{8\eta L}{q} \leq \frac{\tau}{\alp}, \label{eq:BGtdba1}
	}
	then we have the following inequality:
	\eql{\label{eq:LydiffgF1}}{
		&\pc\mt{\Con\XX^k}^2
		+ \frac{{ \eta}}{qn}\mt{\gF{\XX^k} - \gF{\QQ^k}}^2  \\
		\leq& \frac{\tau}{\alp}\pr{f\pr{\ua^k} - f\pr{\xa^k} - \jr{\da^k, \ua^k - \xa^k}} \\
		& + \pd\pr{\pr{1 - \min\dr{\frac{p}{2}, \tau, \frac{\gap}{2}}}\Ly^k - \Ek{\Ly^{k+1}}},
	}
	where
	\eq{
		\pd = \frac{\pc\tau q}{4\alp \eta  L}\geq 2\pc.
	}
	
\end{lemma}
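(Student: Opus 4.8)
The plan is to derive the bound as a positive linear combination of two inequalities already in hand: the consensus recursion~\eqref{eq:LyConX} of Lemma~\ref{lem:conbd1} and the gradient-difference estimate~\eqref{eq:diffgF} of Lemma~\ref{lem:diffgF12}. To keep the bookkeeping transparent, let $C = \mt{\Con\XX^k}^2 + \mt{\Con\QQ^k}^2$ and $G = \mt{\gF{\XX^k} - \gF{\QQ^k}}^2$ denote the consensus and gradient-difference quantities, let $R = f\pr{\ua^k} - f\pr{\xa^k} - \jr{\da^k, \ua^k - \xa^k}$ be the Bregman-type term, and let $S = \pr{1 - \min\dr{\frac{p}{2}, \tau, \frac{\gap}{2}}}\Ly^k - \Ek{\Ly^{k+1}}$ be the Lyapunov descent. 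In this notation~\eqref{eq:LyConX} reads $C \leq S + \pb G$ and~\eqref{eq:diffgF} reads $G \leq 4Ln\,R + 2L^2 C$, while the target is $\pc\,\mt{\Con\XX^k}^2 + \frac{4\eta}{qn}G \leq \frac{\tau}{\alp}R + \pd\,S$.

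The coupling coefficients are dictated by the right-hand side I want to produce. I would multiply~\eqref{eq:LyConX} by $\pd$ and~\eqref{eq:diffgF} by $\frac{\tau}{4\alp L n}$: the first reproduces the coefficient $\pd$ of $S$, and the second reproduces the coefficient $\frac{\tau}{\alp} = \frac{\tau}{4\alp L n}\cdot 4Ln$ of $R$. Adding the two scaled inequalities and moving the two cross terms $\pd\pb\,G$ and $\frac{\tau}{4\alp L n}\cdot 2L^2\,C = \frac{\tau L}{2\alp n}\,C$ to the left gives
\[
\pr{\pd - \frac{\tau L}{2\alp n}}C + \pr{\frac{\tau}{4\alp L n} - \pd\pb}G \ \leq\ \pd\,S + \frac{\tau}{\alp}\,R ,
\]
whose right-hand side is exactly that of the claim. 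Since $C \geq 0$, $G \geq 0$ and $\mt{\Con\XX^k}^2 \leq C$, it then suffices to verify that the coefficient of $C$ is at least $\pc$ and the coefficient of $G$ is at least $\frac{4\eta}{qn}$.

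For the $G$-coefficient I would use the second part of~\eqref{eq:paraLyhalf}, $\pc\pb \leq \frac{2\eta}{qn}$, to write $\pd\pb = \frac{\tau q}{16\alp\eta L}\pr{\pc\pb} \leq \frac{\tau}{8\alp L n}$, so that $\frac{\tau}{4\alp L n} - \pd\pb \geq \frac{\tau}{8\alp L n}$; this is at least $\frac{4\eta}{qn}$ precisely because~\eqref{eq:BGtdba1} gives $\frac{\tau}{\alp} \geq \frac{32\eta L}{q}$. For the $C$-coefficient, after dividing the desired inequality $\pd - \frac{\tau L}{2\alp n} \geq \pc$ by $\frac{\tau q}{16\alp\eta L}$ it becomes $\frac{16\alp\eta L}{\tau q}\pc + \frac{8\eta L^2}{qn} \leq \pc$; here~\eqref{eq:BGtdba1} yields $\frac{16\alp\eta L}{\tau q} \leq \frac{1}{2}$ while the first part of~\eqref{eq:paraLyhalf} yields $\frac{8\eta L^2}{qn} \leq \frac{\pc}{2}$, and the two halves sum to $\pc$. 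This closes the argument.

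The algebra of the combination is routine; the one thing to get right is that both inequalities in~\eqref{eq:parastpdiffgF} are genuinely needed and enter in a specific way — the $G$-check consumes $\pc\pb \leq \frac{2\eta}{qn}$ together with~\eqref{eq:BGtdba1}, and the $C$-check splits into one half controlled by~\eqref{eq:BGtdba1} and one half by $\frac{8\eta L^2}{qn} \leq \frac{\pc}{2}$. The subtle point worth flagging is that $R$ need not be nonnegative (it is a Bregman divergence taken against the inexact gradient $\da^k$ rather than $\na f\pr{\xa^k}$), so one cannot shrink the coefficient $\frac{\tau}{\alp}$ of $R$ directly; the symmetric-combination formulation above never rescales $R$, which is exactly what makes its indeterminate sign harmless.
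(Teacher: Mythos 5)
Your proof is correct and rests on the same two ingredients as the paper's — the consensus recursion \eqref{eq:LyConX} and the gradient-difference bound \eqref{eq:diffgF}, combined linearly under \eqref{eq:paraLyhalf} and \eqref{eq:BGtdba1} — but you run the coupling in the opposite direction, and the difference is more than cosmetic. The paper scales \eqref{eq:LyConX} by $\pc$ and \eqref{eq:diffgF} by $\frac{4\eta}{qn}$ so that the left-hand side of \eqref{eq:LydiffgF1} appears verbatim, uses \eqref{eq:paraLyhalf} to absorb the two cross terms into one half of that left-hand side, and then in its last step enlarges the surviving right-hand coefficients, $\frac{32\eta L}{q}$ up to $\frac{\tau}{\alp}$ and $2\pc$ up to $\pd$, citing \eqref{eq:BGtdba1}. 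That enlargement is valid only if the quantities being multiplied are nonnegative, and neither the Bregman-type term $f\pr{\ua^k}-f\pr{\xa^k}-\jr{\da^k,\ua^k-\xa^k}$ (formed with the inexact gradient $\da^k$, exactly as you flag) nor the descent term $\pr{1-\min\dr{\frac{p}{2},\tau,\frac{\gap}{2}}}\Ly^k-\Ek{\Ly^{k+1}}$ has a guaranteed sign. You instead scale \eqref{eq:LyConX} by $\pd$ and \eqref{eq:diffgF} by $\frac{\tau}{4\alp Ln}$, so the right-hand side of \eqref{eq:LydiffgF1} appears verbatim and all the slack is spent on the left, where only the manifestly nonnegative quantities $\mt{\Con\XX^k}^2+\mt{\Con\QQ^k}^2$ and $\mt{\gF{\XX^k}-\gF{\QQ^k}}^2$ ever have their coefficients shrunk; your two coefficient checks consume exactly the same hypotheses, just regrouped. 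The upshot is that both arguments are the same linear coupling, but yours is airtight where the paper's final step silently assumes signs it does not justify, at the modest price of an extra verification that the left-hand coefficients dominate $\pc$ and $\frac{4\eta}{qn}$.
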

\begin{proof}
	Using~\eqref{eq:diffgF} and~\eqref{eq:LyConX}, we have
	\eq{
		&\pc\pr{\mt{\Con\XX^k}^2 + \mt{\Con\QQ^k}^2}
		+ \frac{\eta}{qn}\mt{\gF{\XX^k} - \gF{\QQ^k}}^2  \\
		\leq& \pc\pb\mt{\gF{\XX^k} - \gF{\QQ^k}}^2  \\
		& + \pc\pr{\pr{1 - \min\dr{\frac{p}{2}, \tau, \frac{\gap}{2}}}\Ly^k - \Ek{\Ly^{k+1}}}  \\
		& + \frac{2\eta L^2}{qn}\pr{\mt{\Con\XX^k}^2 + \mt{\Con\QQ^k}^2}
		+ \frac{4\eta L}{q}\pr{f\pr{\ua^k} - f\pr{\xa^k} - \jr{\da^k, \ua^k - \xa^k}}  \\
		\comleq{\eqref{eq:paraLyhalf}}& \frac{1}{2}\pr{\pc\pr{\mt{\Con\XX^k}^2 + \mt{\Con\QQ^k}^2}
			+ \frac{\eta}{qn}\mt{\gF{\XX^k} - \gF{\QQ^k}}^2}  \\
		& + \pc\pr{\pr{1 - \min\dr{\frac{p}{2}, \tau, \frac{\gap}{2}}}\Ly^k - \Ek{\Ly^{k+1}}}  \\
		& + \frac{4\eta L}{q}\pr{f\pr{\ua^k} - f\pr{\xa^k} - \jr{\da^k, \ua^k - \xa^k}}.
	}
	Rearranging the above equation yields
	\eq{
		&\pc\pr{\mt{\Con\XX^k}^2 + \mt{\Con\QQ^k}^2}
		+ \frac{\eta}{qn}\mt{\gF{\XX^k} - \gF{\QQ^k}}^2  \\
		\leq& \frac{8\eta L}{q}\pr{f\pr{\ua^k} - f\pr{\xa^k} - \jr{\da^k, \ua^k - \xa^k}}  \\
		& + 2\pc\pr{\pr{1 - \min\dr{\frac{p}{2}, \tau, \frac{\gap}{2}}}\Ly^k - \Ek{\Ly^{k+1}}}  \\
		\comleq{\eqref{eq:BGtdba1}}& \frac{\tau}{\alp}\pr{f\pr{\ua^k} - f\pr{\xa^k} - \jr{\da^k, \ua^k - \xa^k}} \\
		& + \pd\pr{\pr{1 - \min\dr{\frac{p}{2}, \tau, \frac{\gap}{2}}}\Ly^k - \Ek{\Ly^{k+1}}}.
	}
	Then,~\eqref{eq:LydiffgF1} follows from $\mt{\Con\QQ^k}^2 \geq 0$.
\end{proof}
\begin{remark}
    To see why the requirements~\eqref{eq:paraLyhalf} and~\eqref{eq:BGtdba1} can be satisfied, we can substitute the magnitudes of the parameters and the stepsize in~\eqref{eq:guideparastp1} and find that the magnitudes on both sides of the inequalities in~\eqref{eq:paraLyhalf} and~\eqref{eq:BGtdba1} are actually all the same.
    But the power numbers of $\eta$ or $p$ on the LHS is higher than those on  the RHS.
    Hence we can choose the other parameters first and  select $p, \eta$ to satisfy the inequalities at last.
    In this way, we can find the proper parameters and stepsizes for inequalities~\eqref{eq:paraLyhalf} and~\eqref{eq:BGtdba1}.
\end{remark}

Now, we formally state the complexities of $\SSGT$.
\begin{theorem}\label{thm:SSGT1}
	Define the Lyapunov function
	\eq{
		\Lya^k = \frac{1}{\gam}\pr{f\pr{\ya^{k}} - f\pr{\xx^*}} + \LyU^{k} + \frac{1+\bet}{2\eta}\nt{\za^{k} - \xx^*}^2 + \pd\Ly^{k}
	}
    and a constant
    \eq{
        \rate = \min\dr{\frac{\gam}{4}, \pra, \frac{\bet}{1+\bet}, \frac{p}{2}, \tau, \frac{\gap}{2}},
    }
    where $\pra$ is given by~\eqref{eq:defpra1}.

	If the parameters and stepsize satisfy~\eqref{eq:Lyparastp},~\eqref{eq:parastpdiffgF} and
	\subeqnuml{\label{eq:thmsnapshotGTcond1}}{
		L\gam \leq \frac{1}{4\eta},\ \frac{\bet}{2\eta} \leq \frac{\mu}{4},  \label{eq:elimstrcv1}  \\
		\frac{1}{\gam} - \frac{1 - \alp - \tau}{\alp} - \frac{1}{2} \leq 0, \label{eq:elimfxk1}  \\
		\frac{1 - \alp - \tau}{\alp} \leq \frac{1}{\gam} - \frac{1}{4}, \label{eq:upbfyk1}
	}
	then $\E{\Lya^k}$ converges linearly with
	\eq{
		\Ek{\Lya^{k+1}} \leq \pr{1 - \rate}\Lya^k.
	}
	Specifically, we can choose
	\eql{\label{eq:parastpex1}}{	
		\tau = \frac{1}{2},\
		\alp = \frac{1}{23\sqrt{\ka}},\
		\gam = \frac{4\alp}{4 - 4\tau - 3\alp},\
		p = q = \frac{\gap}{4232},\
		\eta = \frac{\gap\sqrt{\ka}}{5000  L},\
		\bet = \frac{\mu\eta}{2}.
	}
	In this case, to achieve $\E{\Lya^K} \leq \eps\Lya^0$,
	the gradient computation complexity is $O\pr{\sqrt{\ka}\log\frac{1}{\eps}}$
	and the communication complexity is $O\pr{\frac{\sqrt{\ka}}{\gap}\log\frac{1}{\eps}}$.

\end{theorem}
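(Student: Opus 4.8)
The plan is to establish the one-step contraction $\Ek{\Lya^{k+1}} \leq \pr{1 - \rate}\Lya^k$ by assembling the supporting lemmas into a single linear combination, and then to verify that the explicit choice \eqref{eq:parastpex1} satisfies all the accumulated hypotheses and yields the claimed rate. The organizing step is a decomposition of the \emph{true}-gradient inner product along the $\za$-trajectory. Since $\Ek{\zeta^k} = 1$, Lemma~\ref{lem:gakm} gives $\Ek{\ga^k + \zeta^k\pr{\da^k - \ga^k}} = \da^k$, and because $\za^k - \xx^*$ is $\calF_k$-measurable,
\eq{
	\jr{\da^k, \za^k - \xx^*} =& \Ek{\jr{\ga^k + \zeta^k\pr{\da^k - \ga^k}, \za^k - \za^{k+1}}} \\
	& + \Ek{\jr{\ga^k + \zeta^k\pr{\da^k - \ga^k}, \za^{k+1} - \xx^*}}.
}
I would bound the first summand by \eqref{eq:dajrz+1} and the second by $\tfrac{1}{2\eta}$ times \eqref{eq:dajrzaxo1}; the two $\nt{\za^k - \za^{k+1}}^2$ contributions then combine with coefficient $L\gam + \tfrac{1}{4\eta} - \tfrac{1}{2\eta} = L\gam - \tfrac{1}{4\eta} \leq 0$ and are discarded using $L\gam \leq \tfrac{1}{4\eta}$ from \eqref{eq:elimstrcv1}.

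Next I would add the deterministic bound \eqref{eq:dajrxz} for $\jr{\da^k, \xa^k - \za^k}$ to reconstruct $\jr{\da^k, \xa^k - \xx^*}$, and feed the result into the strong-convexity inequality \eqref{eq:inextmu}. At this point the term $\tfrac{\bet}{2\eta}\nt{\xa^k - \xx^*}^2$ produced by \eqref{eq:dajrzaxo1} is dominated by $-\tfrac{\mu}{4}\nt{\xa^k - \xx^*}^2$ thanks to $\tfrac{\bet}{2\eta} \leq \tfrac{\mu}{4}$ in \eqref{eq:elimstrcv1}, and the three consensus contributions $\tfrac{(1-\alp-\tau)L}{\alp n}$, $\tfrac{L}{n\gam}$, $\tfrac{L}{n}$ sum exactly to the coefficient $\pc$ of $\mt{\Con\XX^k}^2$. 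I would then invoke Lemma~\ref{lem:ratesupp2}: inequality \eqref{eq:LydiffgF1} absorbs both $\pc\mt{\Con\XX^k}^2$ and $\tfrac{4\eta}{qn}\mt{\gF{\XX^k} - \gF{\QQ^k}}^2$ into $\tfrac{\tau}{\alp}\pr{f\pr{\ua^k} - f\pr{\xa^k} - \jr{\da^k, \ua^k - \xa^k}}$ plus the telescoping term $\pd\pr{\pr{1 - \min\dr{\tfrac{p}{2}, \tau, \tfrac{\gap}{2}}}\Ly^k - \Ek{\Ly^{k+1}}}$. The key cancellation is that the cross term $\tfrac{\tau}{\alp}\jr{\da^k, \ua^k - \xa^k}$ coming from \eqref{eq:dajrxz} annihilates the one in \eqref{eq:LydiffgF1}, leaving a clean $\tfrac{\tau}{\alp}\pr{f\pr{\ua^k} - f\pr{\xa^k}}$ that I would bound by Lemma~\ref{lem:LyU} through \eqref{eq:LyU}.

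Collecting terms, $f\pr{\xa^k} - f\pr{\xx^*}$ appears with total coefficient $1 + \tfrac{1-\alp-\tau}{\alp} - \tfrac{1}{\gam} - \tfrac{1}{2}$, which is nonnegative precisely by \eqref{eq:elimfxk1}, so this term can be dropped from the left-hand side. What remains on the left is exactly $\Ek{\Lya^{k+1}}$, and on the right the four pieces $\tfrac{1-\alp-\tau}{\alp}\pr{f\pr{\ya^k} - f\pr{\xx^*}}$, $\pr{1-\pra}\LyU^k$, $\tfrac{1}{2\eta}\nt{\za^k - \xx^*}^2$, and $\pd\pr{1-\min\dr{\tfrac{p}{2},\tau,\tfrac{\gap}{2}}}\Ly^k$. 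Matching these against the four terms of $\Lya^k$ gives the contraction term by term: the $f\pr{\ya^k}$ coefficient obeys $\tfrac{1-\alp-\tau}{\alp} \leq \tfrac{1}{\gam} - \tfrac14 \leq \pr{1-\rate}\tfrac{1}{\gam}$ by \eqref{eq:upbfyk1} together with $\rate \leq \tfrac{\gam}{4}$; the $\za$-distance ratio is $\tfrac{1}{1+\bet} \leq 1-\rate$ by $\rate \leq \tfrac{\bet}{1+\bet}$; and the two factors $\pr{1-\pra}$ and $\pr{1-\min\dr{\cdots}}$ are each $\leq 1-\rate$ by the very definition of $\rate$. This yields $\Ek{\Lya^{k+1}} \leq \pr{1-\rate}\Lya^k$. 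The delicate part of the whole argument — and where I expect the real work to lie — is precisely this bookkeeping: ensuring that the $\jr{\da^k, \ua^k - \xa^k}$ cross terms cancel and that every residual ($\mt{\Con\XX^k}^2$, the gradient difference, the $\nt{\za^k-\za^{k+1}}^2$ and $\bet\nt{\xa^k-\xx^*}^2$ terms) is sign-controlled exactly by the three families \eqref{eq:elimstrcv1}--\eqref{eq:upbfyk1}, with $\pc$, $\pd$ tuned to align the coefficients.

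Finally, for the complexities I would substitute \eqref{eq:parastpex1} and check that the hypotheses \eqref{eq:Lyparastp}, \eqref{eq:parastpdiffgF}, \eqref{eq:thmsnapshotGTcond1} hold for these magnitudes (the routine but lengthy verification already previewed in the preceding remarks, where each inequality has matching orders and the higher powers of $\eta,p$ sit on the smaller side). One then reads off $\bet = \Theta\pr{\gap/\sqrt{\ka}}$, $\pra = \Theta\pr{\gap/\sqrt{\ka}}$, and $\gam = \Theta\pr{1/\sqrt{\ka}}$, so that $\rate = \Theta\pr{\gap/\sqrt{\ka}}$. Taking $K = O\pr{\rate^{-1}\log\tfrac{1}{\eps}} = O\pr{\tfrac{\sqrt{\ka}}{\gap}\log\tfrac{1}{\eps}}$ iterations drives $\E{\Lya^K} \leq \eps\Lya^0$; since each iteration uses $O(1)$ communications, this is the communication complexity, while the coupling $p = q = \Theta\pr{\gap}$ with $\xi^k = q\zeta^k$ means only $\Theta\pr{\gap}$ gradients are computed in expectation per iteration, giving gradient complexity $O\pr{\gap \cdot \tfrac{\sqrt{\ka}}{\gap}\log\tfrac1\eps} = O\pr{\sqrt{\ka}\log\tfrac1\eps}$.
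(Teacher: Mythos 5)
Your proposal is correct and follows essentially the same route as the paper's proof: the identical decomposition of $\jr{\da^k,\xa^k-\xx^*}$ via the unbiased estimator $\ga^k+\zeta^k(\da^k-\ga^k)$, the same three bounds from Lemma~\ref{lem:dajrsupp} combined with Lemmas~\ref{lem:ratesupp2} and~\ref{lem:LyU}, the same sign-control by \eqref{eq:elimstrcv1}--\eqref{eq:upbfyk1}, and the same term-by-term matching against $\Lya^k$ and complexity count via $p+q=O(\gap)$. The only cosmetic differences (keeping the $f(\xa^k)-f(\xx^*)$ term on the left with nonnegative coefficient rather than on the right with nonpositive coefficient, and crediting the unbiasedness to Lemma~\ref{lem:gakm} when it follows simply from $\Ek{\zeta^k}=1$) do not change the argument.
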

\begin{proof}
	We have
	\eq{
		&f\pr{\xa^k} - f\pr{\xx^*} \\
		\comleq{\eqref{eq:inextmu}}& \jr{\da^k, \xa^k - \xx^*} - \frac{\mu}{4}\nt{\xa^k - \xx^*}^2 + \frac{L}{n}\mt{\Con\XX^k}^2  \\
		=& \jr{\da^k, \xa^k - \za^k} + \Ek{\jr{\ga^k + \zeta^k\pr{\da^k - \ga^k}, \za^k - \xx^*}} - \frac{\mu}{4}\nt{\xa^k - \xx^*}^2 + \frac{L}{n}\mt{\Con\XX^k}^2  \\
		=& \jr{\da^k, \xa^k - \za^k} + \Ek{\jr{\ga^k + \zeta^k\pr{\da^k - \ga^k}, \za^k - \za^{k+1}}} \\
        & + \Ek{\jr{\ga^k + \zeta^k\pr{\da^k - \ga^k}, \za^{k+1} - \xx^*}  }
		  - \frac{\mu}{4}\nt{\xa^k - \xx^*}^2 + \frac{L}{n}\mt{\Con\XX^k}^2.
	}
	Substituting the inequalities in Lemma~\ref{lem:dajrsupp} into the above relation and taking conditional expectation yields
	\eql{\label{eq:subst1}}{
		&f\pr{\xa^k} - f\pr{\xx^*} \\
		\leq& \frac{1 - \alp - \tau}{\alp}\pr{f\pr{\ya^k} - f\pr{\xa^k}} + \frac{\tau}{\alp}\jr{\da^k, \ua^k - \xa^k}  + \frac{1}{\gam}\Ek{f\pr{\xa^k} - f\pr{\ya^{k+1}}} \\
		& + \pr{\frac{1 - \alp - \tau}{\alp  n} + \frac{1}{n\gam} + \frac{1}{n}}L\mt{\Con\XX^k}^2
		+ \frac{\eta}{qn}\mt{\gF{\XX^k} - \gF{\QQ^k}}^2  \\
		& + \frac{1}{2\eta}\nt{\za^k - \xx^*}^2 - \frac{1 + \bet}{2\eta}\Ek{\nt{\za^{k+1} - \xx^*}^2} + \pr{L\gam + \frac{1}{4\eta} - \frac{1}{2\eta}}\Ek{\nt{\za^k - \za^{k+1}}^2}   \\
        & + \pr{\frac{\bet}{2\eta} - \frac{\mu}{4}}\nt{\xa^k - \xx^*}^2  \\
		\comleq{\eqref{eq:elimstrcv1}}& \frac{1 - \alp - \tau}{\alp}\pr{f\pr{\ya^k} - f\pr{\xa^k}} + \frac{\tau}{\alp}\jr{\da^k, \ua^k - \xa^k}  + \frac{1}{\gam}\Ek{f\pr{\xa^k} - f\pr{\ya^{k+1}}} \\
		& + c_3(\alpha,\gamma,\tau)\mt{\Con\XX^k}^2
		+ \frac{\eta}{qn}\mt{\gF{\XX^k} - \gF{\QQ^k}}^2  \\
		& + \frac{1}{2\eta}\nt{\za^k - \xx^*}^2 - \frac{1 + \bet}{2\eta}\Ek{\nt{\za^{k+1} - \xx^*}^2}  \\
		\comleq{\eqref{eq:LydiffgF1}}& \frac{1 - \alp - \tau}{\alp}\pr{f\pr{\ya^k} - f\pr{\xa^k}}  + \frac{1}{\gam}\Ek{f\pr{\xa^k} - f\pr{\ya^{k+1}}} + \frac{\tau}{\alp}\pr{f\pr{\ua^k} - f\pr{\xa^k}} \\
		& + \frac{1}{2\eta}\nt{\za^k - \xx^*}^2 - \frac{1 + \bet}{2\eta}\Ek{\nt{\za^{k+1} - \xx^*}^2}  \\
        & + \pd\pr{\pr{1 - \min\dr{\frac{p}{2}, \tau, \frac{\gap}{2}}}\Ly^k - \Ek{\Ly^{k+1}}}  \\
		\comleq{\eqref{eq:LyU}}& \frac{1 - \alp - \tau}{\alp}\pr{f\pr{\ya^k} - f\pr{\xa^k}}  + \frac{1}{\gam}\Ek{f\pr{\xa^k} - f\pr{\ya^{k+1}}} + \frac{1}{2}\pr{f\pr{\xa^k} - f\pr{\xx^*}}  \\
		& + \pr{1 - \pra}\LyU^k - \Ek{\LyU^{k+1}}
		+ \frac{1}{2\eta}\nt{\za^k - \xx^*}^2 - \frac{1 + \bet}{2\eta}\Ek{\nt{\za^{k+1} - \xx^*}^2}  \\
		& + \pd\pr{\pr{1 - \min\dr{\frac{p}{2}, \tau, \frac{\gap}{2}}}\Ly^k - \Ek{\Ly^{k+1}}}.
	}
	Rearranging the above equation yields
	\eq{
		&\Ek{\Lya^{k+1}}  \\
        =& \Ek{\frac{1}{\gam}\pr{f\pr{\ya^{k+1}} - f\pr{\xx^*}} + \LyU^{k+1} + \frac{1+\bet}{2\eta}\nt{\za^{k+1} - \xx^*}^2 + \pd\Ly^{k+1}}  \\
		\leq&\frac{1 - \alp - \tau}{\alp}\pr{f\pr{\ya^k} - f\pr{\xx^*}} + \pr{1 - \pra}\LyU^k
		+ \frac{1}{2\eta}\nt{\za^k - \xx^*}^2  \\
        & + \pd\pr{1 - \min\dr{\frac{p}{2}, \tau, \frac{\gap}{2}}}\Ly^k
		  + \pr{\frac{1}{\gam} - \frac{1 - \alp - \tau}{\alp} - \frac{1}{2}}\pr{f\pr{\xa^k} - f\pr{\xx^*}}  \\
		\comleq{\eqref{eq:elimfxk1},\eqref{eq:upbfyk1}}& \pr{\frac{1}{\gam} - \frac{1}{4}}\pr{f\pr{\ya^k} - f\pr{\xx^*}} + \pr{1 - \pra}\LyU^k
		+ \frac{1}{2\eta}\nt{\za^k - \xx^*}^2  \\
        & + \pd\pr{1 - \min\dr{\frac{p}{2}, \tau, \frac{\gap}{2}}}\Ly^k  \\
		\leq& \pr{1 - \rate}\pr{\frac{1}{\gam}\pr{f\pr{\ya^{k}} - f\pr{\xx^*}} + \LyU^{k} + \frac{1+\bet}{2\eta}\nt{\za^{k} - \xx^*}^2 + \pd\Ly^{k}} \\
		=& \pr{1 - \rate}\Lya^k.
	}
	
	Specifically, when the parameters and the stepsize are chosen as in~\eqref{eq:parastpex1},  we have $\rate = O\pr{\frac{\gap}{\sqrt{\ka}}}$, and
	 the desired communication complexity follows.
	Notice that in each communication round, the gradients are computed for at most ${p + q} = O\pr{\gap}$ times in expectation.
	We obtain the corresponding gradient computation complexity.
\end{proof}

%\section{Optimal Gradient Tracking Over Undirected Graphs}\label{sec:UOGT}
\section{Optimal Gradient Tracking  }\label{sec:OGT1}
In this section, we develop \emph{Optimal Gradient Tracking} (\OGT), which improves the dependency on the graph condition number in the communication complexity of $\SSGT$ from $O\pr{\frac{1}{\gap}}$ to $O\pr{\frac{1}{\sqrt{\gap}}}$, while  keeping the gradient computation complexity unchanged.
Thus, $\OGT$ achieves both optimal gradient computation complexity and optimal communication complexity.
Such an improvement relies on an important technique we develop in this section called the \lCA\ (LCA).

In the rest of this section, we first introduce the OGT algorithm. Then, we motivate the development of LCA and provide the analysis for $\OGT$.

\subsection{Algorithm}
%Before introducing $\OGT$,
To begin with, we introduce some useful notations.
% we first define some useful notations.
Define
\eql{\label{eq:defjing1}}{
	\ert = \frac{1 - \sqrt{1 - \pr{1 - \gap}^2}}{1 + \sqrt{1 - \pr{1 - \gap}^2}},\
	\ew = \frac{1 + \ert}{2},\
	\rw = \sqrt{\ew},\
	\cgap = 1 - \rw.
}
Then we can easily see that $\cgap=O(\sqrt{\gap})$.
%And
We use \emph{``\ca"} as a subscript to denote the following special type of vectors and matrices, which are called \catype\ vectors and matrices.
For any $n$-dimensional vector $\vv$, we define $\vv_{\ca}$ as a $2n$-dimensional vector as follows:
\eq{
	\vv_{\ca} =
	\begin{pmatrix}
		\vv \\
		\vv
	\end{pmatrix}.
}
For any $n$-by-$d$ matrix $\AA$, we define $\AA_{\ca}$ as a $2n$-by-$d$ matrix as follows:
\eq{
	\AA_{\ca} =
	\begin{pmatrix}
		\AA  \\
		\AA
	\end{pmatrix}.
}
Regarding the gradients, we denote
\eq{
	\cgF{\XX^k} =
	\begin{pmatrix}
		\gF{\XX^k} \\
		\gF{\XX^k}
	\end{pmatrix},\
	\cgF{\QQ^k} =
	\begin{pmatrix}
		\gF{\QQ^k} \\
		\gF{\QQ^k}
	\end{pmatrix}.
}
It follows from the definitions that $\mt{\cgF{\XX^k}}^2 = 2\mt{\gF{\XX^k}}^2$,
%$\mt{\cgF{\QQ^k}}^2 = 2\mt{\gF{\QQ^k}}^2$,
$\mt{\AA_{\ca}}^2 = 2\mt{\AA}^2$, and $\nt{\vv_{\ca}}^2 = 2\nt{\vv}^2$.

Define a $2n$-by-$2n$ augmented matrix for the gossip matrix $\WW$ and $\acon$ as
\eql{\label{eq:augW1}}{
	\Wt =
	\begin{pmatrix}
		\pr{1 + \ew}\WW & -\ew\II \\
		\II & \zero
	\end{pmatrix},\qquad \acon =
	\begin{pmatrix}
		\Con & \zero \\
		\zero & \Con
	\end{pmatrix}.
}
It is obvious that
\eq{
	\acon\AA_{\ca} = \pr{\Con\AA}_{\ca},\ \acon\Wt = \acon\Wt\acon,\ \nb{\acon} = 1.
}

The $2n$-by-$d$ matrices used in $\OGT$ are concatenations of $2n$ vectors in the following way (here, we take $\Zt^k$ as an example):
\eq{
	\Zt^k =
	\begin{pmatrix}
		\frac{ \quad }{ } & \ztil^k_{1, 1} & \frac{ \quad }{ }  \\
		\frac{ \quad }{ } & \ztil^k_{2, 1} & \frac{ \quad }{ }  \\
		& \vdots &  \\
		\frac{ \quad }{ } & \ztil^k_{n, 1} & \frac{ \quad     }{ }  \\
		\frac{ \quad }{ } & \ztil^k_{1, 2} & \frac{ \quad }{ }  \\
		\frac{ \quad }{ } & \ztil^k_{2, 2} & \frac{ \quad }{ }  \\
		& \vdots &  \\
		\frac{ \quad }{ } & \ztil^k_{n, 2} & \frac{ \quad     }{ }
	\end{pmatrix},
}
where $\ztil^k_{i,1}, \ztil^k_{i,2} \in \Real^d$ belong to agent $i$.

With the above notations, we can present the $\OGT$ method now.
The $\OGT$ method starts with the initial values:
\eql{\label{eq:initQ}}{
    \YY^0 = \ZZ^0 = \UU^0 = \QQ^0 = \XX^0,\ \Zt^0 = \Ut^0 = \CX^0,\ \Gt^0 = \cgF{\XX^0  }
}
and updates as follows:
\subeqnuml{\label{eq:UOGT1}}{
	%\YY^0 = \ZZ^0 = \UU^0 = \QQ^0 = \XX^0,\ \Zt^0 = \Ut^0 = \CX^0,\ \Gt^0 = \cgF{\XX^0  } \label{eq:initQ}  \\
	\XX^k = \pr{1 - \alp - \tau}\YY^k + \alp\ZZ^k + \tau\UU^k \label{eq:CX} \\
	\Zt^{k+1} = \pr{1 + \bet}\inv\Wt\Big(\Zt^k + \bet\CX^k  \\
    \qquad  - \eta\CG^k + \eta\zeta^k\pr{\cgF{\QQ^k} - \cgF{\XX^k}}\Big)  \label{eq:CZ} \\
	\YY^{k+1} = \XX^k + \gam\pr{\ZZ^{k+1} - \ZZ^k} \label{eq:CY} \\
	\QQ^{k+1} = \pr{1 - \xi^k}\QQ^k + \xi^k\XX^k \label{eq:CQ}  \\
	\Ut^{k+1} = \Wt\pr{\pr{1 - \xi^k}\Ut^k + \xi^k\CX^k}  \label{eq:CU}  \\
	\Gt^{k+1} = \Wt\Gt^k + \xi^k\pr{\cgF{\XX^k} - \cgF{\QQ^k}}  \label{eq:CG}  % \\
}
where
 $\dr{\xi^k, \zeta^k}$ are independent random variables with $\xi^k \sim Bernoulli\pr{p}   $, $\zeta^k \sim Bernoulli\pr{q}/q  $
and
\eq{
	\ZZ^k = \ub{\Zt^k},\ \UU^k = \ub{\Ut^k},\ \GG^k = \ub{\Gt^k}.
}
Recalling the definition of \catype\ matrices, we write
\eq{
	\CX^k =
	\begin{pmatrix}
		\XX^k \\
		\XX^k
	\end{pmatrix},\
	\CG^k =
	\begin{pmatrix}
		\GG^k \\
		\GG^k
	\end{pmatrix}
	=
	\begin{pmatrix}
		\ub{\Gt^k} \\
		\ub{\Gt^k}
	\end{pmatrix}.
}

An implementation-friendly version of $\OGT$ is illustrated in Algorithm~\ref{alg:UOGT} (see Appendix~\ref{sec:ifDOGTandUOGT}).
The equivalence between~\eqref{eq:UOGT1}
%~\eqref{eq:CX}-\eqref{eq:CG}
and Algorithm~\ref{alg:UOGT} can be seen easily from the observation that $\cgF{\QQ^k}$ in~\eqref{eq:UOGT1}
%~\eqref{eq:CX}-\eqref{eq:CG}
equals $\MM^k_{\ca}$ in Algorithm~\ref{alg:UOGT}.
\begin{remark}
	Except $\Wt$ and $\acon$, any bold upper-case letter with ``$\sim$" overhead  denotes a $2n$-by-$d$ matrix.
	The same bold upper-case letter with ``$\sim$" removed denotes an $n$-by-$d$ matrix containing the first $n$ rows of the $2n$-by-$d$ matrix.
	For instance, $\ZZ^k \in \MatSize{n}{d}$, $\Zt^k\in \MatSize{2n}{d}$ and $\ZZ^k = \ub{\Zt^k}.  $
	
\end{remark}

\newcommand\at{\widetilde{\aa}}
\newcommand\At{\widetilde{\AA}}
\begin{remark}\label{rem:Wtmultiat1}
	Multiplication with $\Wt$ can be computed by each agent in the following way (taking $\At = \Wt\Zt^k$ as an example):
	each agent $i$ sends $\ztil^k_{i,1}$ to its neighbors.
	Then, each agent $i$ computes \eq{\at_{i,1} = \pr{1 + \ew}\sum_{j\in\calN_i\cup\{i\}}\WW_{ij}\ztil^k_{j,1} - \ew\ztil^k_{i,2},    }
	where $\calN_i$ is the neighbor set of agent $i$ in $\WW$.
	Each agent $i$ then sets \eq{\at_{i,2} = \ztil^k_{i,1}. }
\end{remark}
\begin{remark}\label{rem:senvecnumOptGT1}
    To implement matrix multiplication with $\Wt$ as in Remark~\ref{rem:Wtmultiat1}, each agent needs to send only one  vector of length $d$ to its neighbors in $\WW$.
    Therefore, in one communication round of $\OGT$, each agent needs to send $3$ vectors of length $d$ to its neighbors.

\end{remark}
\begin{remark}
	The $\CG^k$ added into $\Zt^{k+1}$ in~\eqref{eq:CZ} can be replaced by $\Gt^k$ with the same complexities guaranteed, i.e., replacing~\eqref{eq:CZ}
	by
	\begin{equation}\label{eq:ZtaddebyGt1}%\leqno{\ref{eq:CZ}'}
		\Zt^{k+1} = \pr{1 + \bet}\inv\Wt\pr{\Zt^k + \bet\CX^k - \eta\Gt^k + \eta\zeta^k\pr{\cgF{\QQ^k} - \cgF{\XX^k}}}.
	\end{equation}
	The proof when we use~\eqref{eq:ZtaddebyGt1} instead of~\eqref{eq:CZ} is quite similar with only little changes to Lemma~\ref{lem:qtorc1}.
	In addition, there is no significant difference of numerical performance between these two cases.
	Thus we omit the details of the case where~\eqref{eq:CZ} is replaced by~\eqref{eq:ZtaddebyGt1}.
\end{remark}

In the proof of $\OGT$, we still denote
\eq{
	\za^k = \frac{1}{n}\one\tp\ZZ^k,\ \ua^k = \frac{1}{n}\one\tp\UU^k,\ \ga^k = \frac{1}{n}\one\tp\GG^k.
}
Recall that $\ZZ^k, \UU^k, \GG^k$
are the first $n$-rows of $\Zt^k, \Ut^k, \Gt^k$.
And the notations $\xa^k, \ya^k, \qa^k$ have the same meaning as in Section~\ref{sec:SSGT1}.
We also define
\eq{
	\zat^k = \frac{1}{2n}\one\tp\Zt^k,\ \uat^k = \frac{1}{2n}\one\tp\Ut^k,\ \gat^k = \frac{1}{2n}\one\tp\Gt^k.
}
The next lemma characterizes the evolution of the average parts.
\begin{lemma}\label{lem:caevol1}
	\subeqnuml{\label{eq:caupdt1}}{
		\xa^{k} = \pr{1 - \alp - \tau}\ya^k + \alp\za^k + \tau\ua^k \label{eq:cxa}  \\
		\za^{k+1} = \pr{1 + \bet}\inv\pr{\za^k + \bet\xa^k - \eta\ga^k + \zeta^k\eta\pr{\ga^k - \da^k}} \label{eq:cza}  \\
		\ya^{k+1} = \xa^k + \gam\pr{\za^{k+1} - \za^k} \label{eq:cya} \\
		\qa^{k+1} = \ua^{k+1} = \xi^{k}\xa^k + \pr{1 - \xi^k}\ua^k  \label{eq:cqa}  \\
		\ga^{k+1} = \frac{1}{n}\one\tp\gF{\QQ^{k+1}} \label{eq:cga}
	}
	and
	\eql{\label{tiloleq1}}{
		\zat^k = \za^k,\ \uat^k = \ua^k,\ \gat^k = \ga^k.
	}
	
\end{lemma}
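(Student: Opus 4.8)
The plan is to prove both assertions by a single coupled induction on $k$, with the whole argument resting on how left-multiplication by the augmented matrix $\Wt$ of~\eqref{eq:augW1} acts on block averages. First I would record this elementary action. For any augmented matrix $\widetilde{\AA} \in \MatSize{2n}{d}$ with top block $\ub{\widetilde{\AA}}$ and bottom block $\br{\widetilde{\AA}}_{n+1:2n,:}$, definition~\eqref{eq:augW1} gives
\[
\Wt\widetilde{\AA} = \begin{pmatrix} \pr{1 + \ew}\WW\ub{\widetilde{\AA}} - \ew\br{\widetilde{\AA}}_{n+1:2n,:} \\ \ub{\widetilde{\AA}} \end{pmatrix},
\]
so, using $\one\tp\WW = \one\tp$ from Assumption~\ref{assp:W}, the top-block average of $\Wt\widetilde{\AA}$ equals $\pr{1 + \ew}\frac{1}{n}\one\tp\ub{\widetilde{\AA}} - \ew\frac{1}{n}\one\tp\br{\widetilde{\AA}}_{n+1:2n,:}$ while its bottom-block average equals $\frac{1}{n}\one\tp\ub{\widetilde{\AA}}$. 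The crucial consequence is that if $\widetilde{\AA}$ has equal top and bottom block averages, both equal to some $\bar{a}$, then $\Wt\widetilde{\AA}$ again has both block averages equal to $\bar{a}$. In other words, although $\Wt$ is not doubly stochastic (indeed $\one_{2n}\tp\Wt \neq \one_{2n}\tp$), it preserves the common average precisely when the two block averages already agree.

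With this fact in hand, I would run the induction with hypothesis consisting of~\eqref{tiloleq1}, the tracking identity $\ga^k = \frac{1}{n}\one\tp\gF{\QQ^k}$ (which is~\eqref{eq:cga} read at index $k$), and the identity $\qa^k = \ua^k$. The base case is immediate from the initialization~\eqref{eq:initQ}: $\Zt^0 = \Ut^0 = \CX^0$ and $\Gt^0 = \cgF{\XX^0}$ are all \catype\ matrices, so their top and bottom blocks coincide and hence so do their averages, giving~\eqref{tiloleq1} at $k=0$; moreover $\QQ^0 = \XX^0$ yields $\qa^0 = \xa^0 = \ua^0$ and $\ga^0 = \frac{1}{n}\one\tp\gF{\XX^0} = \frac{1}{n}\one\tp\gF{\QQ^0}$.

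For the inductive step, I would note that in each of~\eqref{eq:CZ},~\eqref{eq:CU},~\eqref{eq:CG} the matrix to which $\Wt$ is applied is $\Zt^k$ (resp.\ $\Ut^k$, $\Gt^k$) modified only by adding or scaling \catype\ matrices; the terms $\bet\CX^k$, $-\eta\CG^k$, $\eta\zeta^k\pr{\cgF{\QQ^k} - \cgF{\XX^k}}$, $\xi^k\CX^k$, and $\xi^k\pr{\cgF{\XX^k} - \cgF{\QQ^k}}$ all have identical top and bottom blocks. Since \catype\ matrices have matching block averages and summing and scaling preserve this, the hypothesis~\eqref{tiloleq1} forces the matrix fed into $\Wt$ to have matching block averages; the structural fact then propagates this to its $\Wt$-image, and in~\eqref{eq:CG} the final \catype\ term does not break the match. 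This gives~\eqref{tiloleq1} at $k+1$. Taking the top-block average of~\eqref{eq:CZ} and~\eqref{eq:CU}, and using the hypothesis $\frac{1}{n}\one\tp\gF{\QQ^k} = \ga^k$ together with $\frac{1}{n}\one\tp\gF{\XX^k} = \da^k$ to rewrite the $\zeta^k$-term, yields exactly~\eqref{eq:cza} and the $\ua$-recursion of~\eqref{eq:cqa}; the identities~\eqref{eq:cxa} and~\eqref{eq:cya} follow by directly averaging the $n$-by-$d$ equations~\eqref{eq:CX} and~\eqref{eq:CY}, while averaging~\eqref{eq:CQ} gives the $\qa$-recursion, which coincides with the $\ua$-recursion and, with $\qa^k = \ua^k$, propagates $\qa^{k+1} = \ua^{k+1}$.

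Finally I would propagate the tracking identity~\eqref{eq:cga}. Taking the top-block average of~\eqref{eq:CG} and invoking~\eqref{tiloleq1} for $\Gt^k$ to cancel the $\ew$-terms gives $\ga^{k+1} = \ga^k + \xi^k\frac{1}{n}\one\tp\pr{\gF{\XX^k} - \gF{\QQ^k}}$. Since $\xi^k \in \dr{0, 1}$, a two-case check against $\QQ^{k+1} = \pr{1 - \xi^k}\QQ^k + \xi^k\XX^k$ (if $\xi^k = 0$ then $\QQ^{k+1} = \QQ^k$, and if $\xi^k = 1$ then $\QQ^{k+1} = \XX^k$), combined with $\ga^k = \frac{1}{n}\one\tp\gF{\QQ^k}$, yields $\ga^{k+1} = \frac{1}{n}\one\tp\gF{\QQ^{k+1}}$, which is~\eqref{eq:cga}; this mirrors the proof of Lemma~\ref{lem:gakm}. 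I expect the main obstacle to lie in the first paragraph: because $\Wt$ is not doubly stochastic one cannot simply claim that multiplication by $\Wt$ leaves averages invariant, so the entire argument hinges on isolating the correct invariant---equal top and bottom block averages---that $\Wt$ does preserve, and on checking that every quantity appended during the $\OGT$ updates is \catype\ and therefore cannot destroy it.
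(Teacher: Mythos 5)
Your proof is correct and takes essentially the same approach as the paper: the paper's own proof rests on exactly the structural fact you isolate (its equations \eqref{eq:Wta1}--\eqref{eq:Wtaub} state that $\Wt$ preserves the common average of matrices whose top and bottom block averages agree), runs the same induction using the \catype\ structure of every appended term, and closes with the same Lemma~\ref{lem:gakm}-style argument for \eqref{eq:cga}. The only difference is organizational---you run one coupled induction where the paper treats $\ua$, $\za$, $\ga$ sequentially via ``similar arguments''---which changes nothing of substance.
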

\begin{proof}
	See Appendix~\ref{sec:prflemcaevol1}
\end{proof}

By~\eqref{tiloleq1}, we have the following decomposition (taking $\Zt^k$ as an example):
\eq{
	\Zt^k = \acon\Zt^k + \one\za^k.
}
Here, we call $\mt{\acon\Zt^k}^2$ the consensus error of $\Zt^k$.
Since~\eqref{eq:cxa}-\eqref{eq:cga} are exactly the same as~\eqref{eq:xa}-\eqref{eq:ga}, the analysis for the average parts is similar to that in Section~\ref{sec:SSGT1}.
Therefore, the most important part of analysis is to bound the consensus errors.

\subsection{Loopless Chebyshev Acceleration (LCA)}
Compared to \SSGT, the \OGT\ method improves the dependency of the graph condition number in communication complexity from $O\pr{\frac{1}{\gap}}$ to $O\pr{\frac{1}{\sqrt{\gap}}}$.
Previous works rely on  inner loops of Chebyshev acceleration to reduce the graph condition number in the complexities, see for instance~\cite{kovalev2020optimal,li2021accelerated,scaman2017optimal}.
However, due to the reasons stated in Section~\ref{sec:needlcal1}, it is hard to naively implement Chebyshev acceleration without inner loops in decentralized algorithms.
The augmented matrix defined in~\eqref{eq:augW1} is used by~\cite{liu2011accelerated} to achieve faster consensus.
However, it may happen that $\nb{\acon\Wt} \geq 1$!
This prevents us from analyzing the effect of $\Wt$ in a similar way of analyzing the gossip matrix $\WW$.
The challenges and our strategy to overcome them are stated in the section below in detail.

\subsubsection{Technical challenges in loopless Chebyshev
acceleration}\label{sec:needlcal1}
Chebyshev acceleration (CA, see for instance~\cite{saad1984chebyshev}) over networks was firstly used in~\cite{scaman2017optimal} to achieve optimal complexities for decentralized optimization with dual information.
{After running CA inner loops each for $\lfloor{1\over\sqrt{\theta}}\rfloor$ iterations, the original dependence of the gradient computation complexity on $\theta$ is removed since this operation is equivalent to applying a new gossip matrix with a constant condition number less than $4$ in the algorithm. This technique has been applied to obtain the optimal complexities for several algorithms~\cite{scaman2017optimal,kovalev2020optimal,li2021accelerated}.}
However, a division operation is required at the end of each inner loop. Without such a division,
CA schemes have to use parameters varying with iterations, see for instance~\cite{young2014iterative}.
Both the division operation and the iteration-varying parameters prevent these methods from being implemented ``looplessly".

Another way to improve the communication efficiency of decentralized optimization algorithms is to use an augmented matrix $\Wt$.
It was proved in~\cite{liu2011accelerated} that $\Wt$ has a spectral radius of $1 - O\pr{\sqrt{\gap}}$.
Note that %for the common $2$-norm,
we may have $\nb{\acon\Wt} \geq 1$, which is undesirable.
A typical way to utilize the spectral radius is to find a specific vector norm $\nm{\cdot}_*$ that induces a matrix norm $\nm{\cdot}_*$ which satisfies $\nm{\Wt}_* \simeq 1 - O\pr{\sqrt{\gap}}$.
Such kind of norms do exist; see for instance Lemma 4 in~\cite{pu2020push}.
However, if we define $\alp_1 = \inf_{\vv\neq \zero} \frac{\nm{\vv}_*}{\nt{\vv}}$ and $\alp_2 = \sup_{\vv\neq \zero} \frac{\nm{\vv}_*}{\nt{\vv}}$, the condition number $\frac{\alp_2}{\alp_1}$ will occur in the complexities of the algorithms.
Without additional requirements on the gossip matrix, it may be impossible to  guarantee that the norm $\nm{\cdot}_*$ satisfies $\nm{\Wt}_* \simeq 1 - O\pr{\sqrt{\gap}}$ and $\frac{\alp_2}{\alp_1}$ is independent of $\frac{1}{\gap}$  simultaneously.

Apparently, new approaches are needed to make CA ``loopless".
Observe that, although it is hard to analyze $\Wt^k\uu$ for general $\uu\in \Real^{2n}$, the term $\Wt^k\vv_{\ca} \ (\vv\in \Real^n)$ has good analytic properties.
In fact, $\Wt^k\vv_{\ca} $ can be represented as
$
\begin{pmatrix}
	P_{k}\pr{\WW}\vv \\
	P_{k-1}\pr{\WW}\vv
\end{pmatrix},
$
where $P_{k}\pr{\cdot}$ and $P_{k-1}\pr{\cdot}$ are polynomials of degree $k$ and $k-1$.
When $\WW$ is symmetric, the 2-norm $\nb{P_{k}\pr{\WW}}$ equals $\max\limits_{1\leq i\leq n} \abs{ P_{k}(\lambda_i) } $, where $\la_i \ (1\leq i\leq n)$ are eigenvalues of $\WW$.
This means that we can bound  $\nt{\Wt^k\vv_{\ca}}$ by analyzing $P_{k}\pr{\cdot}$ and $P_{k-1}\pr{\cdot}$ carefully.
Even though  $\dr{\nt{\Wt^k\uu}}_{k \geq 0}$ may not be a Q-linear sequence for general $\uu\in \Real^{2n}$,
our \lCAL\ will show that for any $\catype$ vector $\vv_{\ca}\in \Real^{2n}$, $\dr{\nt{\Wt^k\vv_{\ca}}}_{k \geq 0}$ is always an R-linear sequence.
Notice that by expanding~\eqref{eq:cxa}-\eqref{eq:cga} recursively,
$\XX^k$ and the other variables at iteration $k$ can be represented by linear combinations of the matrices in the form of $\Wt^j\AA_{\ca} (0 \leq j \leq k)$.
By applying the \lCAL\ (Lemma~\ref{lem:lcal1}) developed below, we are able to show in Lemma~\ref{lem:qtorc1} that $\E{\mt{\Con\XX^k}^2}$ and the other consensus errors are R-linear sequences with ``additional errors".
To deal with these ``additional errors", we can use similar methods as in Section~\ref{sec:SSGT1} and then prove the complexities of $\OGT$.

\subsubsection{``Loopless Chebyshev acceleration lemma"  }

The next lemma is the most important lemma for proving the complexities of $\OGT$.
It shows that the {Frobenius norm} of $\acon\Wt^k\acon\AA_{\ca}$ is an R-linear sequence.
More importantly, the constant $\pe = 14$ below is independent of $\frac{1}{\gap}, n$ and any other quantities.
This enables us to overcome the challenges mentioned in Section~\ref{sec:needlcal1}.
\begin{lemma}\label{lem:lcal1}
	(\LCAL)
	%If $\etat \in (\ert, 1)$,
	If $\etat \in [\frac{1 + \ert}{2}, 1)$,
	define a sequence of polynomials
	\eq{
		&T_0\pr{x} = 1,\ T_1\pr{x} = 1,\\
		&T_{k+2}\pr{x} = \pr{1 + \etat}x T_{k+1}\pr{x} - \etat T_k\pr{x},\ k \geq 0.
	}
	Then, we have
	\eql{\label{eq:lcal1}}{
		\sup_{k\geq 0} \sup_{x\in [0, 1 - \gap]} \frac{{T_k\pr{x}}^2}{\etat^k} \leq \pe'.
	}
	Under Assumption~\ref{assp:W},
	for any $\AA\in \MatSize{n}{d}$ and $k \geq 0$,
	\eql{\label{eq:Chebyssp1}}{
		\mt{\acon\Wt^k\acon\AA_{\ca}}^2 \leq \pe\rw^{2k  }\mt{\Con\AA}^2,
	}
	where $\pe' = 7$ and $\pe = 14$.
	
\end{lemma}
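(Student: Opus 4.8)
The plan is to reduce the matrix inequality~\eqref{eq:Chebyssp1} to the scalar polynomial estimate~\eqref{eq:lcal1}, and then to prove~\eqref{eq:lcal1} by recognizing the recurrence for $T_k$ as a rescaled Chebyshev recurrence. For the reduction I would first expand powers of $\Wt$ on \catype matrices. A direct induction on the block structure of $\Wt$ in~\eqref{eq:augW1} shows that
\eq{
	\Wt^k\AA_{\ca} = \begin{pmatrix} P_k\pr{\WW}\AA \\ P_{k-1}\pr{\WW}\AA \end{pmatrix},
}
where the scalar polynomials obey $P_{k+1}\pr{x} = \pr{1+\ew}xP_k\pr{x} - \ew P_{k-1}\pr{x}$ with $P_0 = P_{-1} = 1$; comparing with the recurrence defining $T_k$ (taken at $\etat = \ew$) gives $P_k = T_{k+1}$. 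Since $\Con$ commutes with $\WW$ and $\acon\AA_{\ca} = \pr{\Con\AA}_{\ca}$ with $\Con\pr{\Con\AA} = \Con\AA$, applying $\acon$ yields
\eq{
	\acon\Wt^k\acon\AA_{\ca} = \begin{pmatrix} P_k\pr{\WW}\Con\AA \\ P_{k-1}\pr{\WW}\Con\AA \end{pmatrix},
}
so $\mt{\acon\Wt^k\acon\AA_{\ca}}^2 = \mt{P_k\pr{\WW}\Con\AA}^2 + \mt{P_{k-1}\pr{\WW}\Con\AA}^2$. Because $\WW$ is symmetric and positive semidefinite with $\WW\one=\one$, its eigenvalues on the range of $\Con$ (i.e.\ $\one^{\perp}$) lie in $[0,1-\gap]$, hence $\mt{P_j\pr{\WW}\Con\AA}^2 \le \pr{\max_{x\in[0,1-\gap]}P_j\pr{x}^2}\mt{\Con\AA}^2$. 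Granting~\eqref{eq:lcal1} with $\pe' = 7$ and $P_j = T_{j+1}$, each factor is at most $7\rw^{2\pr{j+1}}$; summing the terms $j=k$ and $j=k-1$ gives $7\rw^{2k}\pr{1+\rw^2}\mt{\Con\AA}^2 \le 14\rw^{2k}\mt{\Con\AA}^2$, which is~\eqref{eq:Chebyssp1} with $\pe = 14$.

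It remains to prove~\eqref{eq:lcal1}. Fix $\etat\in[\ew,1)$ and set $\rho = \sqrt{\etat}$. I would substitute $T_k\pr{x} = \etat^{k/2}S_k\pr{y}$ with $y = \tfrac{\pr{1+\etat}x}{2\rho}$; the recurrence for $T_k$ then collapses to the Chebyshev recurrence $S_{k+2}\pr{y} = 2yS_{k+1}\pr{y} - S_k\pr{y}$ with $S_0 = 1$, $S_1 = \rho^{-1}$, and~\eqref{eq:lcal1} becomes $\sup_k\sup_y S_k\pr{y}^2 \le 7$. The admissible range of $y$ is the crucial point: as $x$ ranges over $[0,1-\gap]$, $y$ ranges over $[0,c]$ with $c = \tfrac{\pr{1+\etat}\pr{1-\gap}}{2\rho}$. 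A short computation from the definition of $\ert$ gives the identity $\tfrac{2\sqrt{\ert}}{1+\ert} = 1-\gap$; since $\eta\mapsto\tfrac{2\sqrt\eta}{1+\eta}$ is increasing and $\etat\ge\ew = \tfrac{1+\ert}{2} > \ert$, we get $1-\gap < \tfrac{2\rho}{1+\etat}$, i.e.\ $c<1$ with a definite gap. (This is exactly why the hypothesis requires $\etat\ge\tfrac{1+\ert}{2}$ and not merely $\etat\ge\ert$: at $\etat=\ert$ the endpoint would give $c=1$ and a degenerate double root.) Consequently $y=\cos\phi$ for some $\phi\in[\arccos c,\tfrac{\pi}{2}]$ bounded away from $0$, the characteristic roots are the complex conjugates $\rho e^{\pm i\phi}$, and the recurrence has the closed form
\eq{
	S_k\pr{y} = \cos\pr{k\phi} + \frac{\rho^{-1}-\cos\phi}{\sin\phi}\sin\pr{k\phi}.
}

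Bounding the amplitude of this sinusoid gives $S_k\pr{y}^2 \le 1 + h\pr{y}$ uniformly in $k$, where $h\pr{y} := \tfrac{\pr{\rho^{-1}-y}^2}{1-y^2}$, so it suffices to bound $h$ on $[0,c]$. Since $\rho^{-1}>1$, $h'$ vanishes only at $y=\rho$ (a minimum), so the maximum over $[0,c]$ is attained at an endpoint. At $y=0$, $h\pr{0}=\rho^{-2}=\etat^{-1}\le\ew^{-1}\le 2$. At $y=c$, substituting $c$ and simplifying gives $h\pr{c} = \tfrac{\pr{2-\pr{1+\etat}\pr{1-\gap}}^2}{4\etat - \pr{1+\etat}^2\pr{1-\gap}^2}$, which is decreasing in $\etat$ (numerator decreasing, denominator increasing) and hence maximized at $\etat=\ew$; a factorization there shows $h\pr{c}\le 2$ over the whole admissible range of $\gap$. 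Thus $S_k\pr{y}^2\le 3\le 7$, which proves~\eqref{eq:lcal1} with room to spare relative to $\pe'=7$.

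The routine parts are the block-matrix induction and the spectral reduction of the first step. The main obstacle is the scalar estimate: one must check that the entire interval $[0,1-\gap]$ stays inside the complex-root regime — which hinges on the identity $\tfrac{2\sqrt{\ert}}{1+\ert}=1-\gap$ together with the gap created by the hypothesis $\etat\ge\tfrac{1+\ert}{2}$ — and then control the endpoint quantity $h\pr{c}$ uniformly in both $\etat$ and $\gap$. It is precisely at the endpoint $y=c$ (equivalently $x=1-\gap$, the slowest-decaying frequency) that a naive amplitude bound would blow up like $\tfrac{1}{1-c^2}$; the cancellation that keeps $h\pr{c}$ bounded, rather than the trigonometric solution itself, is the heart of the lemma.
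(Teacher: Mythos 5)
Your proposal is correct, and it reaches the stated constants with room to spare. The overall architecture coincides with the paper's (reduce \eqref{eq:Chebyssp1} to the scalar bound \eqref{eq:lcal1} through the spectrum of $\WW$ on $\one^{\perp}$, then solve the three-term recurrence in the complex-root regime), but you finish the scalar estimate by a genuinely different argument. The paper expresses $T_k\pr{x}$ through the conjugate roots $\ra,\rb$ and bounds the coefficient $\abs{\frac{1-\ra}{\ra-\rb}}^2 \le \frac{7}{4}$ \emph{uniformly} over $x\in[0,1-\gap]$; this requires lower-bounding the imaginary part $z$ of the roots via convexity of $r\pr{y}=\pr{1-\gap}^2\pr{1+y}^2-4y$ (giving $z^2 \ge \gap - \gap^2/2$) together with $1-\etat \le \sqrt{2\gap-\gap^2}$, after which a triangle inequality yields $T_k\pr{x}^2\le 7\etat^k$. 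You instead rescale to the Chebyshev recurrence, write the \emph{exact} amplitude $1+h\pr{y}$ with $h\pr{y}=\frac{\pr{\rho^{-1}-y}^2}{1-y^2}$, note that $h$ is unimodal on $[0,1)$ with minimum at $y=\rho$, and so only need the endpoints: $h\pr{0}\le 2$ and $h\pr{c}\le 2$. This buys a sharper constant ($\sup_k T_k^2/\etat^k\le 3$ instead of $7$, hence $\pe=6$ would do) and pinpoints where the estimate is tight: $h\pr{c}\to 2$ as $\gap\to 1$, whereas the paper's coefficient bound is lossy throughout. Your matrix reduction (block induction giving $\Wt^k\AA_{\ca}$ via $P_k=T_{k+1}$, then spectral mapping on $\one^{\perp}$, then $7\rw^{2\pr{k+1}}+7\rw^{2k}\le 14\rw^{2k}$) is the paper's argument in functional-calculus form; note that, exactly like the paper's own proof, it invokes positive semidefiniteness of $\WW$ (Assumption~\ref{assp:Wsym}) to place the relevant eigenvalues in $[0,1-\gap]$ rather than $[-(1-\gap),1-\gap]$, even though the lemma cites only Assumption~\ref{assp:W}.

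The one incomplete step is the claim you yourself identify as the crux: $h\pr{c}\le 2$ at $\etat=\ew$ is asserted (``a factorization there shows\dots'') but never exhibited. It is true; here is the missing computation. Put $s=\sqrt{2\gap-\gap^2}$, so that $\ew=\frac{1}{1+s}$ and $1-\gap=\sqrt{1-s^2}$. Substituting into your expression for $h\pr{c}$ and cancelling the common factor $\pr{1+s}$ gives
\eq{
	h\pr{c} = \frac{8+4s-3s^2-s^3-4\pr{2+s}\sqrt{1-s^2}}{s^2\pr{3+s}},
}
so $h\pr{c}\le 2$ is equivalent to $8+4s-9s^2-3s^3 \le 4\pr{2+s}\sqrt{1-s^2}$. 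When the left-hand side is negative this is trivial; otherwise square both sides and subtract, which produces the factorization
\eq{
	16\pr{2+s}^2\pr{1-s^2} - \pr{8+4s-9s^2-3s^3}^2 = s^2\pr{1-s}\pr{9s^3+63s^2+136s+80} \ge 0,\quad s\in[0,1].
}
With this inserted, your proof is complete and sound (and tight: equality as $s\to 1$, i.e.\ $\gap\to 1$). Do include it explicitly, since it is precisely the endpoint cancellation on which your whole route rests.
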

\begin{proof}
	Define the function \eq{ r(y) = \pr{1 - \gap}^2\pr{1 + y}^2 - 4y.  }
	By the definition of $\ert$ in~\eqref{eq:defjing1}, we have
	%$\ert$ is a root of $r(y)$.
	\eql{\label{eq:rert01}}{
		r\pr{\ert} = 0.
	}
	Since $r(-1)=4 > 0$, $r(1)=4((1-\theta)^2-1) < 0$ and $r\pr{y}$ is a quadratic polynomial, we have $r(y) < 0$, $\forall \ert < y < 1 .  $
	Since $\etat \in [\frac{\ert + 1}{2}, 1) \subset (\ert, 1)$, we have
	\eql{\label{eq:discriminantneg1}}{
		x^2\pr{1 + \etat}^2 - 4\etat \leq r\pr{\etat} < 0,\ \forall x\in [0, 1 - \gap].
	}
	
	Now, fix an $x \in [0, 1 - \gap]$,
	denote $b_k = T_k\pr{x}$ for simplicity.
	Then, the sequence $\dr{b_k}_{k \geq 0}$ is a linear recurrence with
	\eq{
		&b_0 = b_1 = 1,\\
		&b_{k+2} = x \pr{1 + \etat} b_{k+1} - \etat b_k,\ k \geq 0.
	}
	The characteristic equation of this {linear recurrence} is
	\eq{
		g\pr{a} = a^2 - x\pr{1 + \etat}a + \etat.
	}
	The discriminant of $g\pr{a}$ is
	\eq{
		x^2\pr{1 + \etat}^2 - 4\etat \overset{\eqref{eq:discriminantneg1}}{<} 0.
	}
	Therefore, the {linear recurrence} $\dr{b_k}_{k\geq 0}$ can be solved as
	\eq{
		b_{k} = \frac{1 - \rb}{\ra - \rb}\pr{\ra}^{k} + \frac{1 - \ra}{\rb - \ra}\pr{\rb}^{k},\ \forall k \geq 0,
	}
	where
	\eql{\label{eq:rootceq}}{
		&\ra = \sqrt{\etat - z^2} + i z,\
		\rb = \sqrt{\etat - z^2} - i z.
	}
	Here,
	\eq{
		z = \frac{1}{2}\sqrt{4\etat - x^2\pr{1+\etat}^2}.
	}
	Next, we bound $\abs{\frac{1 - \ra}{\ra - \rb}}^2$.
	Firstly, we give a lower bound for $z$.
	Since $r(y)$ is convex, we have
	\eq{
		r\pr{\etat} =& r\pr{\frac{\etat - \ert}{1 - \ert}\cdot 1 + \frac{1 - \etat}{1 - \ert}\cdot \ert} \leq \frac{\etat - \ert}{1 - \ert} r\pr{1} + \frac{1 - \etat}{1 - \ert} r\pr{\ert}  \\
		\comeq{\eqref{eq:rert01}}& -\frac{\pr{\etat - \ert}  \pr{8\gap - 4\gap^2}}{1 - \ert}
		\leq   - 4\gap + 2\gap^2,
	}
	where
	the last inequality is from $\etat\in [\frac{\ert + 1}{2}, 1).  $
	
	Thus,
	\eql{\label{eq:zlowb1}}{
		z = \frac{1}{2}\sqrt{4\etat - x^2\pr{1+\etat}^2} \geq \frac{1}{2}\sqrt{-r\pr{\etat}} \geq \sqrt{\gap - \frac{\gap^2}{2}}.
	}
	Then, we give lower bounds for $\ert$ and $\etat$:
	\eql{\label{eq:ertlowb1}}{
		\ert &= \frac{1 - \sqrt{1 - \pr{1 - \gap}^2}}{1 + \sqrt{1 - \pr{1 - \gap}^2}} \geq \pr{1 - \sqrt{1 - \pr{1 - \gap}^2}}^2 \geq 1 - 2\sqrt{2\gap - \gap^2}. \\
		\etat &\geq \frac{1 + \ert}{2} \geq 1 - \sqrt{2\gap - \gap^2}.
	}
	
	By~\eqref{eq:rootceq}, the term $\abs{\frac{1 - \ra}{\ra - \rb}}^2$ can be rewritten as
	\eq{
		\abs{\frac{1 - \ra}{\ra - \rb}}^2 = \frac{\abs{1 - \ra}^2}{\abs{\ra - \rb}^2} = \frac{\pr{1 - \sqrt{\etat - z^2}}^2 + z^2}{4z^2}.
	}
	Notice that $1 - \sqrt{\etat - z^2} \leq 1 - \etat + z^2$, we can derive
	\eq{
		&\abs{\frac{1 - \ra}{\ra - \rb}}^2 \leq \frac{\pr{1 - \etat + z^2}^2 + z^2}{4z^2} \leq \frac{2\pr{1 - \etat}^2 + 2z^4 + z^2}{4z^2}  \\
		\comleq{\eqref{eq:zlowb1}}& \frac{\pr{1 - \etat}^2}{2\gap - \gap^2} + \frac{z^2}{2} + \frac{1}{4}
		\comleq{\eqref{eq:ertlowb1}} 1 + \frac{z^2}{2} + \frac{1}{4} \leq \frac{7}{4},
	}
	where
	we used $z \leq \etat < 1$ in the last inequality.
	
	By~\eqref{eq:rootceq}, $\abs{\ra} = \abs{\rb} = \sqrt{\etat}$.
	Thus,
	\eq{
		T_{k}\pr{x}^2 = \abs{b_{k}}^2 \leq 2\abs{\frac{1 - \rb}{\ra - \rb}}^2\abs{\ra}^{2k} + 2\abs{\frac{1 - \ra}{\rb - \ra}}^2\abs{\rb}^{2k} \leq 7\etat^k,\ \forall k\geq 0.
	}
	Since the above arguments hold for any $x\in [0, 1 - \gap]$, ~\eqref{eq:lcal1} follows.

	To prove~\eqref{eq:Chebyssp1}, it suffices to prove
	\eq{
		\nt{\acon\Wt^k\acon\vv_{\ca}}^2 \leq \pe\rw^{2k }\nt{\Con\vv}^2,\ \forall \vv\in \Real^n.
	}
	Denote $\vv^0 = \Con\vv$ and
	\eq{
		\vv^k = \br{\acon\Wt^{k-1}\acon\vv_{\ca}}_{1:n}.
	}
	Then, by the definition of $\Wt$,  the update rule of $\vv^k$ can be written as
	\eql{\label{eq:vkupdt1}}{
		&\vv^0 = \vv^1 = \Con\vv, \\
		&\vv^{k+2} = \pr{1+\ew}\WW\vv^{k+1} - \ew\vv^k.
	}
	We also have
	\eq{
		\acon\Wt^k\acon\vv_{\ca} =
		\begin{pmatrix}
			\vv^{k+1} \\
			\vv^k
		\end{pmatrix}.
	}
	
	Let $0 \leq \la_n \leq \la_{n-1} \leq \cdots \leq \la_2 = 1 - \gap < \la_1 = 1$ be the eigenvalues of $\WW$ ($\la_n \geq 0$ is from the positive semi-definiteness in Assumption~\ref{assp:W}), and let $\dd_i$ be the corresponding eigenvector of $\la_i$.
	Scale each $\dd_i$ such that $\nt{\dd_i} = 1$.
	Then, $\dd_1 = \frac{1}{\sqrt{n}}\one$.
    Since $\one\tp\acon = \zero\tp$, by induction, $\vv^k$ is orthogonal to the all-ones vector for any $k \geq 0$.
    So, we have $\jr{\vv^k, \dd_1} = 0 \ (\forall k\geq 0)$.
	Therefore, for any $k \geq 0$, $\vv^k$ has a unique decomposition as follows:
	\eq{
		\vv^k = \sum_{i=2}^{n} e_i^k \dd_i,
	}
	where $e^k_i = \jr{\vv^k, \dd_i}.  $
	
	Taking inner product with $\dd_i$ on both sides of~\eqref{eq:vkupdt1} yields
	\eq{
		&e^0_i = e^1_i = \jr{\vv^0, \dd_i}, \\
		&e^{k+2}_i = \la_i\pr{1+\ew} e^{k+1}_i - \ew e^k_i,\ k \geq 0.
	}
	Since $\la_i\in [0, 1 - \gap] \ (\forall 2\leq i\leq n)$,
	by~\eqref{eq:lcal1},
	\eq{
		\abs{e^k_i}^2 = \abs{e^0_i\cdot T_k\pr{\la_i}}^2 \leq 7 \abs{e^0_i}^2 \ew^k = 7 \abs{e^0_i}^2 \rw^{2k},\ \forall 2\leq i\leq n.
	}
	Then,
	\eq{
		&\nt{\acon\Wt^k\acon\vv_{\ca}}^2 = \nt{\vv^{k+1}}^2 + \nt{\vv^k}^2 = \sum_{i=2}^{n} \abs{e^{k+1}_i}^2 + \sum_{i=2}^{n} \abs{e^k_i}^2  \\
		\leq& \sum_{i=2}^{n} 7 \abs{e^0_i}^2 \rw^{2\pr{k+1}} + \sum_{i=2}^{n} 7 \abs{e^0_i}^2 \rw^{2{k}} \leq \pe \sum_{i=2}^{n} \abs{e^0_i}^2 \rw^{2k} = \pe \rw^{2k} \nt{\vv^0}^2 = \pe \rw^{2k} \nt{\Con\vv}^2.
	}
\end{proof}
\begin{remark}
	The condition $\etat \in [\frac{1 + \ert}{2}, 1)$ can be relaxed to $\etat \in (\ert, 1)$ with $\pe' = 3 + \frac{2\pr{1 - \ert}}{\etat - \ert}$ and $\pe = 2\pe'$.
	Minimizing the constants $\pe$ and $\pe'$ is out of the scope of this paper.
	
\end{remark}

\subsection{Achieving Chebyshev acceleration without inner loops}
In this section, we show that $\OGT$ achieves the lower bounds on the gradient computation complexity and the communication complexity simultaneously.

We first present a lemma that is derived with the help of Lemma~\ref{lem:lcal1}.
Its meaning can be intuitively understood as follows:
the relations~\eqref{eq:ConG},~\eqref{eq:ConZ},~\eqref{eq:ConU} show that $\E{\mt{\Con\GG^k}^2}$, $\E{\mt{\Con\ZZ^k}^2}$, $\E{\mt{\Con\UU^k}^2}$ are Q-linear sequences with ``additional errors";
while Lemma~\ref{lem:qtorc1} shows that $\E{\mt{\acon\Gt^k}^2}$, $\E{\mt{\acon\Ut^k}^2}$, $\E{\mt{\Con\ZZ^k}^2}$ are R-linear sequences with ``additional errors".
And the ``additional errors" in Lemma~\ref{lem:qtorc1} only differ in constants compared with those in the relations~\eqref{eq:ConG},~\eqref{eq:ConZ},~\eqref{eq:ConU}.
\begin{lemma}\label{lem:qtorc1}
	If
	%\eql{\label{eq:parasqqr1}}{
	$    p \leq \cgap  $,
	%}
	then
	there are sequences of variables $\dr{\cg^k}_{k \geq 0}, \dr{\cu^k}_{k \geq 0}, \dr{\cz^k}_{k \geq 0}$ such that
	for any $k \geq 0$,
	\eql{\label{eq:aconguz1}}{
		\E{\mt{\acon\Gt^k}^2} \leq \cg^k,\ \E{\mt{\acon\Ut^k}^2} \leq \cu^k,\ \E{\mt{\Con\ZZ^k}^2} \leq \cz^k,
	}
	and
	\subeqnum{
		\cg^{k+1} \leq \pr{1 - \cgap}\cg^k  + 2\pe p\E{\mt{{\gF{\XX^k} - \gF{\QQ^k}}}^2}
		%\label{eq:cgrecv1}
		\label{eq:aconG} \\
		\cu^{k+1} \leq    \pr{1 - \cgap}^2\cu^k +  \pe p\E{\mt{\Con\XX^k}^2}
		\label{eq:aconU}  \\
		\cz^{k+1} \leq \pr{1 - \cgap}\cz^k + \E{\frac{4\pe \bet^2}{\cgap}\mt{\Con\XX^k}^2 + \frac{4\pe \eta^2}{\cgap}\cg^k} \notag  \\
		\qquad\qquad + \pe \pr{\frac{\eta^2}{q} + \frac{2\eta^2}{\cgap}}\E{\mt{{\gF{\XX^k} - \gF{\QQ^k  }}}^2  }.
		\label{eq:aconZ}
	}
	
\end{lemma}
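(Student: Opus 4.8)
The plan is to reduce every consensus error of the augmented, $2n$-row variables to the single analytic input provided by the \LCAL\ (Lemma~\ref{lem:lcal1}), which plays the exact role for $\Wt$ that the contraction $\nb{\Con\WW}=1-\gap$ plays for $\WW$ in Section~\ref{sec:SSGT1}. The first observation is that, because the initial data $\Zt^0=\Ut^0=\CX^0$ and $\Gt^0=\cgF{\XX^0}$ are \catype\ and every injected term in~\eqref{eq:CZ}--\eqref{eq:CG} (namely $\CX^k$, $\CG^k$, and $\cgF{\XX^k}-\cgF{\QQ^k}$) is \catype, each of $\Gt^k,\Ut^k,\Zt^k$ is, after expanding the recursion, a linear combination of matrices of the form $\Wt^m\BB_{\ca}$. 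Next, since $\Wt\one=\one$ for the $2n$-dimensional all-ones vector $\one$ and $\acon\one=\zero$, any \catype\ matrix splits as $\BB_{\ca}=\acon\BB_{\ca}+\one\bar{\bb}$ with the average part fixed by $\Wt$ and annihilated by $\acon$; hence $\acon\Wt^m\BB_{\ca}=\acon\Wt^m\acon\BB_{\ca}$ and Lemma~\ref{lem:lcal1} applies, giving $\mt{\acon\Wt^m\BB_{\ca}}^2\le\pe\rw^{2m}\mt{\Con\BB}^2$. This is the replacement for $\mt{\Con\WW^m\Con\BB}^2\le(1-\gap)^{2m}\mt{\Con\BB}^2$, with $1-\gap$ replaced by $\rw=1-\cgap$ at the cost of a single universal constant $\pe$.

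The cleanest case is $\Ut$, which I would treat first as a template. The factor $(1-\xi^k)$ in~\eqref{eq:CU} erases the past whenever $\xi^k=1$, so between resets $\Ut^k$ is a single term $\Wt^{k-J}\CX^{J}$, where $J\le k-1$ is the most recent reset time (or $0$). Defining $\cu^k:=\pe\,\E{\rw^{2(k-J)}\mt{\Con\XX^{J}}^2}$ gives $\E{\mt{\acon\Ut^k}^2}\le\cu^k$ directly from Lemma~\ref{lem:lcal1}, and conditioning on $\calF_k$ (so that $\E{\xi^k}=p$) yields $\cu^{k+1}\le(1-p)\rw^2\cu^k+\pe p\,\rw^2\,\E{\mt{\Con\XX^k}^2}$, which is exactly~\eqref{eq:aconU} after $\rw^2=(1-\cgap)^2$ and absorbing constants. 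No cross terms arise precisely because the reset keeps $\Ut^k$ a single $\Wt$-power of a \catype\ matrix.

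For $\Gt$ and $\Zt$ there is no reset, so the variables accumulate the whole injection history and one cannot track a single term. Here I would define the dominating sequences $\cg^k,\cz^k$ and prove~\eqref{eq:aconG},~\eqref{eq:aconZ} by reproducing the one-step derivations~\eqref{eq:ConG} and~\eqref{eq:ConZ} of Section~\ref{sec:SSGT1}, but with the effective contraction supplied by Lemma~\ref{lem:lcal1} in place of $\nb{\Con\WW}$. Concretely: apply the same Young's-inequality splitting used in~\eqref{eq:ConYsp1}--\eqref{eq:ConZ}, which is the source of the $\tfrac{1}{\cgap}$ factors in~\eqref{eq:aconZ} (a geometric sum $\sum_m\rw^m=\tfrac1\cgap$, exactly as $\tfrac1\gap$ appears in~\eqref{eq:ConZ}); bound the contribution of $\CG^k$ to $\Zt^{k+1}$ through $\Con\GG^k=\ub{\acon\Gt^k}$, so that $\E{\mt{\Con\GG^k}^2}\le\cg^k$ and the coupling term $\tfrac{8\pe\eta^2}{\cgap}\cg^k$ appears with the current $\cg^k$; and invoke the hypothesis $p\le\cgap$ in the same way~\eqref{eq:Lyparastp} uses $(1+p)(1-\gap)^2\le1-\gap$, to keep the Bernoulli injection subdominant to the rate-$(1-\cgap)$ decay. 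The $\Gt$ recursion~\eqref{eq:aconG} follows similarly, with the injection $\xi^k\pr{\cgF{\XX^k}-\cgF{\QQ^k}}$ entering at $\Wt$-power zero, which is why its additive error $4\pe p\,\E{\mt{\gF{\XX^k}-\gF{\QQ^k}}^2}$ carries no $\tfrac1\cgap$.

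The main obstacle is exactly the non-contractivity flagged in Section~\ref{sec:needlcal1}: since $\nb{\acon\Wt}$ may exceed $1$, one cannot close a one-step recursion on $\E{\mt{\acon\Gt^{k+1}}^2}$ or $\E{\mt{\acon\Zt^{k+1}}^2}$ through a naive $\mt{\acon\Wt\,\cdot}\le\nb{\acon\Wt}\mt{\cdot}$ bound, and the accumulated history produces cross terms that a crude Cauchy--Schwarz would inflate by $\tfrac1\cgap$. The resolution I would pursue is to let Lemma~\ref{lem:lcal1} carry all of the contraction: its proof shows that, per eigenvector of $\WW$, the relevant pair of iterates evolves under a $2\times2$ matrix whose eigenvalues are complex of modulus $\rw$, so the augmented dynamics genuinely contract at rate $\rw$ in an equivalent norm whose distortion is the universal constant $\pe$. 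Transferring this through the \catype\ structure is what converts the $R$-linear bound into the clean recursions~\eqref{eq:aconG}--\eqref{eq:aconZ}; checking that the additive errors land in precisely the stated forms (with $\tfrac1\cgap$ only in~\eqref{eq:aconZ}) and that $p\le\cgap$ suffices to absorb the injection terms is the delicate bookkeeping, but it mirrors Section~\ref{sec:SSGT1} step for step once the LCAL substitution is in place.
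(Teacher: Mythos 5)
Your plan has the right skeleton and correctly isolates the crux, and your treatment of $\Ut^k$ is sound: because the $(1-\xi^k)$ factor resets the history, $\Ut^k=\Wt^{k-J}\CX^{J}$ for the random last reset time $J$, every such term is one power of $\Wt$ acting on a \catype\ matrix, and Lemma~\ref{lem:lcal1} plus conditioning on $\xi^k$ gives \eqref{eq:aconU}; this is a legitimate (and slightly cleaner) alternative to the paper, which simply unrolls $\Ut^k$ like the other variables. The genuine gap is in how you close the recursions for $\cg^k$ and $\cz^k$. Your stated mechanism --- that the augmented dynamics ``contract at rate $\rw$ in an equivalent norm whose distortion is the universal constant $\pe$'' --- is false, and it is exactly the dead end that Section~\ref{sec:needlcal1} rules out. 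For the $2\times 2$ block $M_{\la}=\begin{pmatrix}(1+\ew)\la & -\ew\\ 1 & 0\end{pmatrix}$ at $\la=1-\gap$, write $M_{\la}^k=\rw^k\pr{\cos(k\phi)\II+\sin(k\phi)\JJ}$ with $\JJ^2=-\II$; since $\nb{\JJ}=\Theta\pr{1/\sin\phi}$ and $\sin\phi=\Theta\pr{\sqrt{\gap}}$, one gets $\sup_{k}\nb{M_{\la}^k}/\rw^{k}=\Omega\pr{1/\sqrt{\gap}}$, and any norm satisfying $\nm{M_{\la}\vv}_*\le\rw\nm{\vv}_*$ forces $\nb{M_{\la}^k}\le\frac{\alp_2}{\alp_1}\rw^k$, so its distortion $\frac{\alp_2}{\alp_1}$ must be $\Omega\pr{1/\sqrt{\gap}}$, not $O(1)$. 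The constant $\pe$ in Lemma~\ref{lem:lcal1} is not a norm-equivalence constant: the bound \eqref{eq:Chebyssp1} holds only for \catype\ inputs (the initial condition $b_0=b_1=1$, whose coefficients in the ill-conditioned eigenbasis happen to be $O(1)$), and after a single application of $\Wt$ the iterate is no longer \catype, so the bound cannot be re-applied step by step. Hence your plan to ``reproduce the one-step derivations \eqref{eq:ConG} and \eqref{eq:ConZ} with the effective contraction supplied by Lemma~\ref{lem:lcal1}'' cannot close; as you yourself observe, no inequality bounds $\E{\mt{\acon\Wt\acon\Gt^k}^2}$ by a multiple of $\E{\mt{\acon\Gt^k}^2}$.

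What is missing is the device the paper actually uses: define the majorants \emph{explicitly} as fully unrolled sums, so that the recursions \eqref{eq:aconG} and \eqref{eq:aconZ} hold by construction of these sums rather than as statements about the true errors. Unrolling \eqref{eq:CG} to time $0$ writes $\acon\Gt^k$ as a sum of terms $\acon\Wt^{m}\acon(\cdot)_{\ca}$ with \catype\ arguments ($\cgF{\XX^i}-\cgF{\QQ^i}$ and $\CG^0$); Lemma~\ref{lem:lcal1} applies termwise, the hypothesis $p\le\cgap$ absorbs the factor $(1+p)^{k-1-i}$ into one factor of $(1-\cgap)^{k-1-i}$, and one sets
\eq{
\cg^k \defeq \E{\sum_{i=0}^{k-1}4\pe p\pr{1-\cgap}^{k-1-i}\mt{\gF{\XX^i}-\gF{\QQ^i}}^2 + 2\pe\pr{1-\cgap}^{k}\mt{\Con\GG^0}^2},
}
for which \eqref{eq:aconG} is immediate by inspecting the sum. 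For $\Zt^k$ there is a further wrinkle your write-up does not anticipate: one cannot unroll $\mt{\acon\Zt^k}^2$ na\"ively either; the paper instead expands $\Ek{\mt{\acon\Wt^{k-1-i}\acon\Zt^{i+1}}^2}$, i.e.\ performs the Young splitting of \eqref{eq:ConYsp1} \emph{with the future power of $\Wt$ still attached}, so that every resulting term again carries a full power of $\Wt$ against a \catype\ matrix (or against $\acon\Zt^{i}$ with the power increased by one), and the coupling term $\frac{8\pe\eta^2}{\cgap}\cg^i$ enters through $\E{\mt{\Con\GG^i}^2}\le\E{\mt{\acon\Gt^i}^2}\le\cg^i$ as you anticipated. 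This construction is also why the lemma is phrased with auxiliary sequences $\cg^k,\cu^k,\cz^k$ at all: the actual consensus errors need not satisfy any one-step recursion, only these explicitly built majorants do. As written, your proposal lacks this construction and rests on the invalid contraction claim, so it does not constitute a proof, although the fix is exactly the unrolling argument above.
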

\begin{proof}
	From similar arguments for deriving relation~\eqref{eq:ConG}, we have
	\eql{\label{eq:aconGtsupp1}}{
		&\Ek{\mt{\acon\Gt^{k+1}}^2}  \\
		\leq& \pr{1 - p}\mt{\acon\Wt\acon\Gt^k}^2 + p \mt{\acon\Wt\acon\Gt^k + \acon\pr{\cgF{\XX^k} - \cgF{\QQ^k}}}^2 \\
		\leq& \pr{1 + p}\mt{\acon\Wt\acon\Gt^k}^2 + 2p \mt{{\cgF{\XX^k} - \cgF{\QQ^k}}}^2.
	}
	Similar with~\eqref{eq:aconGtsupp1}, we have
	\eq{
		&\pr{1 + p}\mathbb{E}_{k-1}\br{\mt{\acon\Wt\acon\Gt^k}^2}  \\
		\leq& \pr{1 + p}^2\mt{\acon\Wt^2\acon\Gt^{k-1}}^2 + 2p\pr{1 + p}\mt{\acon\Wt\pr{\cgF{\XX^{k-1}} - \cgF{\QQ^{k-1}}}}^2.
	}
	Repeating this process recursively, we obtain
	\eql{\label{eq:aconGt}}{
		&\E{\mt{\acon\Gt^k}^2}  \\
		\leq& \E{\sum_{i=0}^{k-1} 2p \pr{1 + p}^{k-1-i}\mt{\acon\Wt^{k-1-i}\pr{\cgF{\XX^i} - \cgF{\QQ^i}}}^2} \\
    & + \pr{1+p}^k\E{\mt{\acon\Wt^{k}\acon\CG^0}^2}  \\
		\comleq{\eqref{eq:Chebyssp1}}& \E{\sum_{i=0}^{k-1} 2 \pe p \pr{1 + p}^{k-1-i}\pr{1 - \cgap}^{2\pr{k-1-i}}\mt{{\gF{\XX^i} - \gF{\QQ^i}}}^2}  \\
 & + \E{\pe \pr{1+p}^k\pr{1 - \cgap}^{2k}\mt{\acon\GG^0}^2} \\
		\leq& \E{\sum_{i=0}^{k-1} 2\pe p\pr{1 - \cgap}^{{k-1-i}}\mt{{\gF{\XX^i} - \gF{\QQ^i}}}^2 + \pe\pr{1 - \cgap}^{k}\mt{\Con\GG^0}^2}  \\
		\defeq& \cg^k,}
	where the last inequality is from the condition $p \leq \cgap$.
	
	By the definition of $\cg^k$ on the RHS of~\eqref{eq:aconGt}, we have $\E{\mt{\acon\Gt^k}^2} \leq \cg^k $ and
	\eq{
		\cg^{k+1} = \pr{1 - \cgap}\cg^k + 2\pe p\E{\mt{{\gF{\XX^k} - \gF{\QQ^k}}}^2}.
	}

	By the definition of $\xi^k$,
	\eq{
		\Ek{\mt{\acon\Ut^{k+1}}^2} =& \pr{1 - p}\mt{\acon\Wt\acon\Ut^k}^2 + p \mt{\acon\Wt\acon\CX^k}^2  \\
        \leq& \mt{\acon\Wt\acon\Ut^k}^2 + p \mt{\acon\Wt\acon\CX^k}^2.
	}
	Again, we have
	\eq{
		\mathbb{E}_{k-1}\br{\mt{\acon\Wt\acon\Ut^k}^2} \leq \mt{\acon\Wt^2\acon\Ut^{k-1}}^2 + p \mt{\acon\Wt^{2}\acon\CX^k}^2.
	}
	Repeating this process yields
	\eql{\label{eq:defcuk1}}{
		&\E{\mt{\acon\Ut^k}^2}  \\
		\leq& \E{ p \sum_{i=0}^{k-1} \mt{\acon\Wt^{k-i}\acon\CX^i}^2 + \mt{\acon\Wt^k\acon\CU^0}^2  }  \\
		\comleq{\eqref{eq:Chebyssp1}}&  \E{ \pe p \sum_{i=0}^{k-1}\pr{1 - \cgap}^{2\pr{k-i}}\mt{\Con\XX^i}^2 + \pe\pr{1 - \cgap}^{2k}\mt{\Con\UU^0}   }  \\
		\defeq&   \cu^k.
	}
	By the definition of $\cu^k$ on the RHS of~\eqref{eq:defcuk1}, we have $\E{\mt{\acon\Ut^k}^2} \leq \cu^k$ and
	\eq{
		\cu^{k+1} = \pr{1 - \cgap}^2\cu^k + \pe p\pr{1 - \cgap}^2\E{\mt{\Con\XX^k}^2} \leq \pr{1 - \cgap}^2\cu^k + \pe p\E{\mt{\Con\XX^k}^2}.
	}
	Similar to~\eqref{eq:ConYsp1}, we have
	\eql{\label{eq:Ztexpansp1}}{
		&\Ek{\mt{\acon\Wt^{k-1-i}\acon\Zt^{i+1}}^2}  \\
		\leq& \pr{1 + \frac{\cgap}{2}}\pr{\frac{1}{1 - \frac{\cgap}{2}}\mt{\acon\Wt^{k-i}\acon\Zt^i}^2 + \frac{4\bet^2}{\cgap}\mt{\acon\Wt^{k-i}\acon\CX^i}^2 + \frac{4\eta^2}{\cgap}\mt{\acon\Wt^{k-i}  \acon\CG^i}^2}  \\
		& + \pr{\frac{\eta^2}{q} + \frac{2\eta^2}{\cgap}}\mt{\acon\Wt^{k-i}\pr{\cgF{\QQ^i} - \cgF{\XX^i}}}^2,\ \forall 0\leq i < k.
	}
	Using~\eqref{eq:Ztexpansp1} recursively yields
	\eql{\label{eq:defcz1}}{
		&\E{\mt{\acon\Zt^k}^2}  \\
		\leq& \E{ \pr{\frac{1 + \frac{\cgap}{2}}{1 - \frac{\cgap}{2}}}^k\mt{\acon\Wt^k\acon\Zt^0}^2} \\
        & + \E{\sum_{i=0}^{k-1} \pr{1 + \frac{\cgap}{2}}^{k-i}\pr{\frac{4\bet^2}{\cgap}\mt{\acon\Wt^{k-i}\acon\CX^i}^2 + \frac{4\eta^2}{\cgap}\mt{\acon\Wt^{k-i}  \acon\CG^i}^2}}  \\
		& + \E{ \sum_{i=0}^{k-1} \pr{1 + \frac{\cgap}{2}}^{k-i} \pr{\frac{\eta^2}{q} + \frac{2\eta^2}{\cgap}}\mt{\acon\Wt^{k-i}\pr{\cgF{\QQ^i} - \cgF{\XX^i}}}^2     }
		\\
		\comleq{\eqref{eq:Chebyssp1}}&  \E{\sum_{i=0}^{k-1} \pr{1 - \cgap }^{k-i}\pr{\frac{4\pe \bet^2}{\cgap}\mt{\Con\XX^i}^2 + \frac{4\pe \eta^2}{\cgap}\cg^i}}  \\
		& + \pe\E{\sum_{i=0}^{k-1} \pr{1 - \cgap }^{k-i}\pr{\frac{\eta^2}{q} + \frac{2\eta^2}{\cgap}}\mt{{\gF{\QQ^i} - \gF{\XX^i}}}^2}    \\
		& + \pe\pr{1 - \cgap}^k\mt{\Con\ZZ^0}^2  \\
		\defeq& \cz^k.
	}
	By the definition of $\cz^k$ on the RHS of~\eqref{eq:defcz1}, we have $\E{\mt{\Con\ZZ^k}^2} \leq  \E{\mt{\acon\Zt^k}^2} \leq \cz^k$ and
	\eq{
		\cz^{k+1} =& \pr{1 - \cgap}\cz^k  \\
                & + \E{\frac{4\pe\bet^2}{\cgap}\mt{\Con\XX^k}^2 + \frac{4\pe\eta^2}{\cgap}\cg^k
			+ \pe\pr{\frac{\eta^2}{q} + \frac{2\eta^2}{\cgap}}\mt{{\gF{\QQ^k} - \gF{\XX^k}}}^2  }.
	}
	%  \pr{1 + \frac{\gap}{2}}
\end{proof}

The next lemma is derived in a way similar with Lemma~\ref{lem:conbd1}.
It constructs a Lyapunov function to bound the consensus errors.
\begin{lemma}\label{lem:cconbd1}
	If the parameters and stepsize satisfy
	\subeql{\label{eq:cLyparastpun1}}{
		%&  p\leq \gap  \\
		&\frac{2\pr{1 - \tau}}{2 - \tau} + \frac{192\pe  \pr{1 - \tau}\gam^2\bet^2}{\tau\cgap^2} + \frac{\tau}{4} + \frac{16\pe    p}{\cgap} \leq 1 - \frac{\tau}{8}  \\
		%&\frac{17\tau  \gap}{40} \leq \frac{3\gap}{2} - \gap^2  \\
		&\frac{\alp^2}{\pr{1 - \tau}\gam^2} \leq 1  \\
		&\frac{192\pe \pr{1 - \tau}\gam^2\eta^2  L^2}{\tau\cgap} \leq \frac{\cgap}{2},
	}
	then,
	\eql{\label{eq:cLyConX}}{
		\E{\mt{\Con\XX^k}^2 + \mt{\Con\QQ^k}^2} \leq& \pr{1 - \min\dr{\frac{p}{2}, \tau, \frac{\cgap}{2}}}\cLy^k - \cLy^{k+1}  \\
		& + \pe  \pbb\E{\mt{\gF{\XX^k} - \gF{\QQ^k}}^2},
	}
	where the Lyapunov function is defined as
	\eq{
		\cLy^k =& \frac{8}{\tau}\Big( \E{\frac{\tau}{4p}\mt{\Con\QQ^k}^2 + \frac{4\pr{1 - \tau}}{4 - \tau}\mt{\Con\YY^{k}}^2} + \frac{16}{\cgap}\cu^k  \\
            & \qquad + \pr{1 - \frac{\cgap}{6}}\frac{48\pr{1 - \tau}\gam^2}{\tau\cgap}\cz^k
			+ \frac{1}{\cgap  L^2}\cg^k\Big),
	}
	and
	$\pbb$ is defined analogously to~\eqref{eq:defpb1}, i.e.,
	\eq{\pbb = \frac{384\pr{1 - \tau}\gam^2}{\tau^2\cgap}\pr{\frac{\eta^2}{q} + \frac{2\eta^2}{\cgap}} + \frac{16  p}{\tau  \cgap  L^2}. }
	
\end{lemma}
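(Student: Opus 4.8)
The plan is to transcribe the proof of Lemma~\ref{lem:conbd1} almost line for line, with one structural substitution: the three consensus-error recursions~\eqref{eq:ConZ},~\eqref{eq:ConU},~\eqref{eq:ConG} for $\ZZ^k$, $\UU^k$, $\GG^k$ are replaced by the R-linear surrogate recursions~\eqref{eq:aconZ},~\eqref{eq:aconU},~\eqref{eq:aconG} for $\cz^k$, $\cu^k$, $\cg^k$ furnished by Lemma~\ref{lem:qtorc1}. Since $\cz^k$, $\cu^k$, $\cg^k$ are \emph{deterministic} upper bounds on $\E{\mt{\Con\ZZ^k}^2}$, $\E{\mt{\acon\Ut^k}^2}$, $\E{\mt{\acon\Gt^k}^2}$ (and $\E{\mt{\Con\UU^k}^2}\le\cu^k$, $\E{\mt{\Con\GG^k}^2}\le\cg^k$ because taking the first $n$ rows cannot increase the Frobenius norm), I would work throughout with the full expectation $\E{\cdot}$ rather than the conditional $\Ek{\cdot}$. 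This is exactly why the target~\eqref{eq:cLyConX} carries the deterministic $\cLy^{k+1}$ in place of a conditional expectation.

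First I would assemble six inequalities. The bounds for $\XX^k$, $\YY^{k+1}$, $\QQ^{k+1}$ are~\eqref{eq:ConX},~\eqref{eq:ConY},~\eqref{eq:ConQ} with expectations taken and the surrogate bounds substituted, namely $\E{\mt{\Con\XX^k}^2}\le\frac{4\pr{1-\tau}^2}{4-\tau}\E{\mt{\Con\YY^k}^2}+\frac{8\alp^2}{\tau}\cz^k+8\tau\cu^k$, $\E{\mt{\Con\YY^{k+1}}^2}\le\frac{4-\tau}{4-2\tau}\E{\mt{\Con\XX^k}^2}+\frac{2\pr{4-\tau}\gam^2}{\tau}\pr{\cz^{k+1}+\cz^k}$, and $\E{\mt{\Con\QQ^{k+1}}^2}=\pr{1-p}\E{\mt{\Con\QQ^k}^2}+p\E{\mt{\Con\XX^k}^2}$; the remaining three are~\eqref{eq:aconU},~\eqref{eq:aconZ},~\eqref{eq:aconG}. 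I would then take the weighted sum of these with weights $1$, $\frac{\tau}{4p}$, $\frac{4\pr{1-\tau}}{4-\tau}$, $\frac{16}{\cgap}$, $\frac{48\pr{1-\tau}\gam^2}{\tau\cgap}$, $\frac{1}{\gap L^2}$ on the $\XX$-, $\QQ$-, $\YY$-, $\cu$-, $\cz$-, $\cg$-inequalities, respectively. The copy of $\cz^{k+1}$ that the $\YY$-bound deposits on the right-hand side has coefficient $\frac{2\pr{4-\tau}\gam^2}{\tau}\cdot\frac{4\pr{1-\tau}}{4-\tau}=\frac{8\pr{1-\tau}\gam^2}{\tau}=\frac{\cgap}{6}\cdot\frac{48\pr{1-\tau}\gam^2}{\tau\cgap}$; moving it back to the left turns the $\cz^{k+1}$ weight into $\pr{1-\frac{\cgap}{6}}\frac{48\pr{1-\tau}\gam^2}{\tau\cgap}$, which is precisely the $\pr{1-\frac{\cgap}{6}}$ factor appearing in $\cLy^k$.

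Collecting coefficients, the $\E{\mt{\Con\XX^k}^2}$ coefficient is exactly the left-hand side of the first line of~\eqref{eq:cLyparastpun1}, hence $\le 1-\frac{\tau}{8}$; the $\cu^k$ coefficient $8\tau+\pr{1-\cgap}^2\frac{16}{\cgap}$ collapses to $\le\pr{1-\frac{\cgap}{2}}\frac{16}{\cgap}$ since $2\cgap+\tau\le 3$; the $\cz^k$ coefficient collapses to $\pr{1-\frac{2\cgap}{3}}\frac{48\pr{1-\tau}\gam^2}{\tau\cgap}\le\pr{1-\frac{\cgap}{2}}\pr{1-\frac{\cgap}{6}}\frac{48\pr{1-\tau}\gam^2}{\tau\cgap}$ after using the second line of~\eqref{eq:cLyparastpun1} to absorb $\frac{8\alp^2}{\tau}$ into $\frac{8\pr{1-\tau}\gam^2}{\tau}$; and the $\cg^k$ coefficient $\pr{1-\cgap}\frac{1}{\gap L^2}+\frac{8\pe\eta^2}{\cgap}\cdot\frac{48\pr{1-\tau}\gam^2}{\tau\cgap}$ collapses to $\le\pr{1-\frac{\cgap}{2}}\frac{1}{\gap L^2}$ via the third line of~\eqref{eq:cLyparastpun1}. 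The aggregated $\mt{\gF{\XX^k}-\gF{\QQ^k}}^2$ coefficient, after the outer factor $\frac{8}{\tau}$, equals $\frac{768\pe\pr{1-\tau}\gam^2}{\tau^2\cgap}\pr{\frac{\eta^2}{q}+\frac{2\eta^2}{\cgap}}+\frac{32\pe p}{\tau\gap L^2}$, which is $\le 2\pe\pb$. Finally, moving the big $\mt{\Con\XX^k}^2$ coefficient to the left, adding $\frac{\tau}{8}\mt{\Con\QQ^k}^2$ to isolate $\mt{\Con\QQ^k}^2$, and dividing by $\frac{\tau}{8}$ yields~\eqref{eq:cLyConX} with rate $\min\dr{\frac{p}{2},\tau,\frac{\cgap}{2}}$.

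The main obstacle is the two-way bookkeeping between $\cz$ and $\cg$ together with the mismatch between $\gap$ and $\cgap$. Unlike in Lemma~\ref{lem:conbd1}, the surrogate $\cg^k$ now feeds into the $\cz^{k+1}$ recursion with the extra constant $\pe$, so I must check that the $\cz$-weight times this coupling, added to the discounted $\cg$-weight, still fits under the $\cg$-budget of $\cLy^k$; this works only because the two error bounds above (the $\cg$ collapse and the $2\pe\pb$ bound) both rely on $\cgap\ge\gap$, which makes the stated right-hand sides $\frac{\cgap}{2}$ in~\eqref{eq:cLyparastpun1} and $\pb$ (defined with $\gap$) sufficient even though they are not tight for the weight $\frac{1}{\gap L^2}$. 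Keeping the ubiquitous constant $\pe$ from Lemma~\ref{lem:lcal1} and the $\cgap$-rates consistent across all four collapsing inequalities, while confirming each surrogate sequence contracts at rate at least $\min\dr{\frac{p}{2},\tau,\frac{\cgap}{2}}$, is the only delicate part; everything else is a verbatim transcription of the SS-GT computation with $\gap$ replaced by $\cgap$ and the factor $\pe$ inserted.
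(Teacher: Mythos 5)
Your proposal is correct and follows essentially the same route as the paper's own proof: the paper likewise works with full expectations, substitutes the surrogate sequences $\cg^k,\cu^k,\cz^k$ from Lemma~\ref{lem:qtorc1} into a transcription of the Lemma~\ref{lem:conbd1} computation, takes the weighted sum with exactly the weights you list, collapses the coefficients via the three lines of~\eqref{eq:cLyparastpun1} (including the $\pr{1-\frac{\cgap}{6}}$ factor arising from moving the $\cz^{k+1}$ term deposited by the $\YY$-bound back to the left), and finishes by isolating $\frac{\tau}{8}\pr{\mt{\Con\XX^k}^2+\mt{\Con\QQ^k}^2}$ and dividing by $\frac{\tau}{8}$. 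The one point where you are actually more careful than the paper is the $\gap$-versus-$\cgap$ bookkeeping: the paper's proof uses the weight $\frac{1}{\cgap L^2}$ on $\cg^k$ while its stated Lyapunov function and $\pb$ carry $\gap$ (an internal inconsistency it never reconciles), whereas you flag explicitly that reading the statement literally forces the comparison $\cgap\ge\gap$, which is exactly the wrinkle the paper glosses over.
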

\begin{proof}
	Noting that the condition $p \leq \cgap$ in Lemma~\ref{lem:qtorc1} is implied by~\eqref{eq:cLyparastpun1}, so we can apply the result of Lemma~\ref{lem:qtorc1} here.
	Similar to~\eqref{eq:ConX}, we have
	\eql{\label{eq:aconX}}{
		\emt{\Con\XX^k} &\leq \frac{4\pr{1 - \tau}^2}{4 - \tau}\emt{\Con\YY^k} + \frac{8\alp^2}{\tau}\emt{\Con\ZZ^k} + 8\tau\emt{\Con\UU^k}  \\
		&\comleq{\eqref{eq:aconguz1}} \frac{4\pr{1 - \tau}^2}{4 - \tau}\emt{\Con\YY^k} + \frac{8\alp^2}{\tau}\cz^k + 8\tau\cu^k.
	}
	Similar with~\eqref{eq:ConY},
	\eql{\label{eq:aconY}}{
		\emt{\Con\YY^{k+1}  } &\leq \frac{4 - \tau}{4 - 2\tau}\emt{\Con\XX^k} + \frac{2\pr{4 - \tau}\gam^2}{\tau}\pr{\emt{\Con\ZZ^{k+1}} + \emt{\Con\ZZ^k}}  \\
		&\comleq{\eqref{eq:aconguz1}} \frac{4 - \tau}{4 - 2\tau}\emt{\Con\XX^k} + \frac{2\pr{4 - \tau}\gam^2}{\tau}\pr{\cz^{k+1} + \cz^k  }.
	}
	By the definition of $\xi^k$,
	\eql{\label{eq:aconQ}}{
		\E{\mt{\Con\QQ^{k+1}}} = \pr{1 - p}\emt{\Con\QQ^k} + p\emt{\Con\XX^k}.
	}
	Taking weighted sum on both sides of~\eqref{eq:aconG},~\eqref{eq:aconU},~\eqref{eq:aconZ},~\eqref{eq:aconX},~\eqref{eq:aconY},~\eqref{eq:aconQ}  yields
	\eq{
		&\E{\mt{\Con\XX^k}^2 + \frac{\tau}{4p}\mt{\Con\QQ^{k+1  }}^2 + \frac{4\pr{1 - \tau}}{4 - \tau}\mt{\Con\YY^{k+1}}^2}  \\
        & + \frac{16}{\cgap  }\cu^{k+1} + \frac{48\pr{1 - \tau}\gam^2}{\tau\cgap}\cz^{k+1} + \frac{1 }{\cgap L^2}\cg^{k+1}  \\
		\leq&  \E{\pr{\frac{2\pr{1 - \tau}}{2 - \tau} + \frac{192\pe \pr{1 - \tau}\gam^2\bet^2}{\tau\cgap^2} + \frac{\tau}{4} + \frac{16\pe  p}{\cgap}}\mt{\Con\XX^k}^2}  \\
			& + \E{\pr{1 - p}\frac{\tau}{4p}\mt{\Con\QQ^k}^2
			+ \frac{4\pr{1 - \tau}^2}{4 - \tau}\mt{\Con\YY^k}^2}  \\
		& + \pr{1 - 2\cgap + \cgap^2 + \frac{\tau\cgap}{2}}\frac{16}{\cgap}\cu^k
		+ \pr{1 - \cgap + \frac{\cgap}{6} + \frac{\alp^2\cgap}{6\pr{1 - \tau}\gam^2}}\frac{48\pr{1 - \tau}\gam^2}{\tau\cgap}\cz^k  \\
		& + \frac{8\pr{1 - \tau}\gam^2}{\tau}\cz^{k+1}
		+ \pr{1 - \cgap       + \frac{192\pe \pr{1 - \tau}\gam^2\eta^2  L^2}{\tau\cgap}}\frac{1}{\cgap  L^2}\cg^k  \\
		& + \pe \paa\E{\mt{\gF{\XX^k} - \gF{\QQ^k}}^2}  \\
		\comleq{\eqref{eq:cLyparastpun1}}&  \E{\pr{1 - \frac{\tau}{8}}\mt{\Con\XX^k}^2
			+ \pr{1 - p}\frac{\tau}{4p}\mt{\Con\QQ^k}^2
			+ \frac{4\pr{1 - \tau}^2}{4 - \tau}\E{\mt{\Con\YY^k}^2}}  \\
		& + \pr{1 - \frac{\cgap}{2}}\frac{16    }{\cgap}\cu^k
		 + \pr{1 - \frac{2\cgap}{3}}\frac{48\pr{1 - \tau}\gam^2}{\tau\cgap}\cz^k + \frac{8\pr{1 - \tau}\gam^2}{\tau}\cz^{k+1}  \\
		& + \pr{1 - \frac{\cgap}{2}}\frac{1}{\cgap  L^2}\cg^k
		 + \pe \paa\E{\mt{\gF{\XX^k} - \gF{\QQ^k}}^2},
	}
	where
	\eq{
	    	\paa = \frac{48\pr{1 - \tau}\gam^2}{\tau\cgap}\pr{\frac{\eta^2}{q} + \frac{2\eta^2}{\cgap}} + \frac{2p}{\cgap  L^2}.
	}
	
	By rearranging the above equation, we have
	\eq{
		&  \E{\frac{\tau}{8}\pr{\mt{\Con\XX^k}^2 + \mt{\Con\QQ^{k}}^2}
			+ \frac{\tau}{4p}\mt{\Con\QQ^{k+1}}^2
			+ \frac{4\pr{1 - \tau}}{4 - \tau}\mt{\Con\YY^{k+1}}^2}  \\
		& + \frac{16}{\cgap}\cu^{k+1} + \pr{1 - \frac{\cgap}{6}}\frac{48\pr{1 - \tau}\gam^2}{\tau\cgap}\cz^{k+1} + \frac{1}{\cgap  L^2}\cg^{k+1}  \\
		\leq&   \E{\pr{1 - \frac{p}{2}} \frac{\tau}{4p}\mt{\Con\QQ^k}^2     + \frac{4\pr{1 - \tau}^2}{4 - \tau}\mt{\Con\YY^{k}}^2}  \\
        & + \pr{1 - \frac{\cgap}{2}}\frac{16}{\cgap}\cu^k + \pr{1 - \frac{2\cgap}{3}}\frac{48\pr{1 - \tau}\gam^2}{\tau\cgap}\cz^k  \\
		& + \pr{1 - \frac{\cgap}{2}}\frac{1}{\cgap  L^2}\cg^k + \pe\paa\E{\mt{\gF{\XX^k} - \gF{\QQ^k}}^2}  \\
		\leq& \pr{1 - \min\dr{\frac{p}{2}, \tau, \frac{\cgap}{2}}}
		 \Big(\E{\frac{\tau}{4p}\mt{\Con\QQ^k}^2 + \frac{4\pr{1 - \tau}}{4 - \tau}\mt{\Con\YY^{k}}^2}  \\
        & + \frac{16}{\cgap}\cu^k + \pr{1 - \frac{\cgap  }{6}}\frac{48\pr{1 - \tau}\gam^2}{\tau\cgap}\cz^k
			+ \frac{1}{\cgap  L^2}\cg^k \Big)  \\
		& + \pe \paa\E{\mt{\gF{\XX^k} - \gF{\QQ^k}}^2}.
	}
	%Using the fact that $\pr{1 - \frac{2\gap}{3}} \leq \pr{1 - \frac{\gap}{6}}\pr{1 - \frac{\gap}{2}}$ and
	Then, rearranging the above equation yields ~\eqref{eq:LyConX}.
\end{proof}

Since~\eqref{eq:caupdt1} is exactly the same as~\eqref{eq:aupdt1},  relation~\eqref{eq:diffgF} still holds in this section.
And it is easy to see that the result of Lemma~\ref{lem:cconbd1} has a very similar form with that in Lemma~\ref{lem:conbd1}.
Thus, the following lemma can be derived by almost the same arguments as the proof of Lemma~\ref{lem:ratesupp2}. We omit its proof here.
\begin{lemma}\label{lem:ccratesupp2}
	With $\pc, \pd$ defined in Lemma~\ref{lem:ratesupp2}.
	If the parameters and stepsize satisfy
	\subeqnuml{\label{eq:cparastpdiffgF}}{
		\frac{2\eta L^2}{qn} \leq \frac{\pc}{2},\
		\pe \pc\pbb \leq \frac{\eta}{2qn}
		\label{eq:cparaLyhalf}  \\
		\frac{8\eta L}{q} \leq \frac{\tau}{\alp}, \label{eq:cBGtdba1}
	}
	then we have the following inequality
	\eql{\label{eq:cLydiffgF1}}{
		&\E{\pc\mt{\Con\XX^k}^2
			+ \frac{\eta}{qn}\mt{\gF{\XX^k} - \gF{\QQ^k}}^2}  \\
		\leq& \E{\frac{\tau}{\alp}\pr{f\pr{\ua^k} - f\pr{\xa^k} - \jr{\da^k, \ua^k - \xa^k}}
			+ \pd\pr{\pr{1 - \min\dr{\frac{p}{2}, \tau, \frac{\gap}{2}}}\cLy^k - {\cLy^{k+1}}}}.
	}
	
\end{lemma}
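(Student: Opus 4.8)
The plan is to follow the proof of Lemma~\ref{lem:ratesupp2} line by line, substituting the deterministic consensus bound \eqref{eq:LyConX} with its expected OGT counterpart \eqref{eq:cLyConX} and propagating the outer expectation $\E{\cdot}$ through every manipulation. I would begin from the quantity $\E{\pc\pr{\mt{\Con\XX^k}^2 + \mt{\Con\QQ^k}^2} + \frac{4\eta}{qn}\mt{\gF{\XX^k} - \gF{\QQ^k}}^2}$, where the nonnegative $\pc\mt{\Con\QQ^k}^2$ has been appended so that the full consensus error $\mt{\Con\XX^k}^2 + \mt{\Con\QQ^k}^2$ is present and \eqref{eq:cLyConX} applies verbatim.

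I would then bound the two pieces separately. Applying \eqref{eq:cLyConX} to $\pc\pr{\mt{\Con\XX^k}^2 + \mt{\Con\QQ^k}^2}$ produces $\pc$ times the Lyapunov difference together with a feedback term $2\pe\pc\pb\,\E{\mt{\gF{\XX^k}-\gF{\QQ^k}}^2}$, while applying \eqref{eq:diffgF} to $\frac{4\eta}{qn}\mt{\gF{\XX^k}-\gF{\QQ^k}}^2$ yields $\frac{16\eta L}{q}\E{f\pr{\ua^k}-f\pr{\xa^k}-\jr{\da^k,\ua^k-\xa^k}}$ plus a feedback term $\frac{8\eta L^2}{qn}\E{\mt{\Con\XX^k}^2+\mt{\Con\QQ^k}^2}$. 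The two inequalities in \eqref{eq:cparaLyhalf} are engineered precisely to absorb half of the starting quantity: $\frac{8\eta L^2}{qn}\le\frac{\pc}{2}$ controls the consensus feedback, and $\pe\pc\pb\le\frac{\eta}{qn}$ controls the gradient-difference feedback. The extra factor $\pe$ on the left (together with the halved right-hand side relative to \eqref{eq:paraLyhalf}) is exactly the price paid for the factor $2\pe$ carried by \eqref{eq:cLyConX} in place of the $\pb$ of \eqref{eq:LyConX}. After moving this absorbed half to the left-hand side and multiplying through by $2$, I would invoke \eqref{eq:cBGtdba1}, i.e. $\frac{32\eta L}{q}\le\frac{\tau}{\alp}$: this single condition simultaneously rescales the function-gap coefficient from $\frac{32\eta L}{q}$ down to $\frac{\tau}{\alp}$ (valid since $f\pr{\ua^k}-f\pr{\xa^k}-\jr{\da^k,\ua^k-\xa^k}\ge 0$ by convexity) and delivers $2\pc\le\pd$, because $\pd=\frac{\pc\tau q}{16\alp\eta L}$. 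Dropping the nonnegative $\pc\mt{\Con\QQ^k}^2$ on the left then gives \eqref{eq:cLydiffgF1}.

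The only point requiring genuine care, and the one I would single out, is the mismatch between the contraction rate $\min\dr{\frac{p}{2},\tau,\frac{\cgap}{2}}$ appearing in \eqref{eq:cLyConX} and the rate $\min\dr{\frac{p}{2},\tau,\frac{\gap}{2}}$ demanded by the target \eqref{eq:cLydiffgF1}. Since $\cgap\ge\gap$ (recall $\cgap=O(\sqrt{\gap})$ from \eqref{eq:defjing1}), the nondecreasing map $x\mapsto\min\dr{\frac{p}{2},\tau,\frac{x}{2}}$ gives $\min\dr{\frac{p}{2},\tau,\frac{\gap}{2}}\le\min\dr{\frac{p}{2},\tau,\frac{\cgap}{2}}$, so $\pr{1-\min\dr{\frac{p}{2},\tau,\frac{\cgap}{2}}}\cLy^k\le\pr{1-\min\dr{\frac{p}{2},\tau,\frac{\gap}{2}}}\cLy^k$ because $\cLy^k\ge 0$; this lets me weaken \eqref{eq:cLyConX} into the exact form needed before running the rescaling argument above. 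Beyond this bookkeeping and the appearance of $\E{\cdot}$ (which commutes with all the linear steps, the conditioning having already been discharged inside \eqref{eq:cLyConX}), every estimate is term-for-term identical to the proof of Lemma~\ref{lem:ratesupp2}, which is why the authors omit the details.
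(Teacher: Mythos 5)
Your main argument reproduces the paper's intended proof faithfully: the paper omits this proof precisely because it is the proof of Lemma~\ref{lem:ratesupp2} rerun with \eqref{eq:cLyConX} in place of \eqref{eq:LyConX}, and your accounting of the constants is exactly right — the factor $\pe$ in \eqref{eq:cparaLyhalf} (with the halved right-hand side) compensates the $2\pe\pb$ carried by \eqref{eq:cLyConX}, the two conditions in \eqref{eq:cparaLyhalf} absorb half of the starting quantity, and after doubling, \eqref{eq:cBGtdba1} both upgrades the coefficient $\frac{32\eta L}{q}$ to $\frac{\tau}{\alp}$ and yields $2\pc\leq\pd$ since $\pd=\frac{\pc\tau q}{16\alp\eta L}$; dropping $\pc\mt{\Con\QQ^k}^2\geq 0$ finishes, just as in the paper.

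The gap is in the one step you single out as needing "genuine care". The claim $\cgap\geq\gap$ is false in general: writing $s=\sqrt{1-\pr{1-\gap}^2}$, the definition \eqref{eq:defjing1} gives $\cgap=1-\pr{1+s}^{-1/2}$, so for instance $\gap=1$ yields $\cgap=1-2^{-1/2}\approx 0.29<\gap$, and indeed $\cgap<\gap$ whenever $\gap\gtrsim 0.21$. The relation $\cgap=O\pr{\sqrt{\gap}}$ is an asymptotic statement as $\gap\to 0$ and does not imply the pointwise inequality. The step is repairable, however: invoking \eqref{eq:cLyConX} already commits you to the hypotheses \eqref{eq:cLyparastpun1} of Lemma~\ref{lem:cconbd1}, which force $p\leq\cgap$ (this is exactly how Lemma~\ref{lem:qtorc1} becomes applicable there); hence $\min\dr{\frac{p}{2},\tau,\frac{\cgap}{2}}=\min\dr{\frac{p}{2},\tau}\geq\min\dr{\frac{p}{2},\tau,\frac{\gap}{2}}$ no matter how $\cgap$ and $\gap$ compare, and the weakening you need then follows from $\cLy^k\geq 0$. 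Alternatively — and this is almost certainly the paper's intent, since the theorem that follows applies this lemma with the contraction factor $1-\min\dr{\frac{p}{2},\tau,\frac{\cgap}{2}}$ and would forfeit the $O\pr{\sqrt{\ka/\gap}}$ communication rate in the small-$\gap$ regime otherwise — the $\gap$ appearing in \eqref{eq:cLydiffgF1} is a typo for $\cgap$, in which case no bridging step is needed at all. A smaller point: your parenthetical that $f\pr{\ua^k}-f\pr{\xa^k}-\jr{\da^k,\ua^k-\xa^k}\geq 0$ "by convexity" is the same unremarked leap the paper makes at the end of the proof of Lemma~\ref{lem:ratesupp2}; since $\da^k\neq\na f\pr{\xa^k}$, convexity of the $f_i$ only gives this quantity $\geq-\frac{L}{2n}\mt{\Con\XX^k}^2$, so you share this issue with the paper rather than introduce it.
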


Now, we are ready to show the complexities of $\OGT$.
\begin{theorem}
	Define the Lyapunov function
	\eq{
		\cLya^k = \E{\frac{1}{\gam}\pr{f\pr{\ya^{k}} - f\pr{\xx^*}} + \LyU^{k} + \frac{1+\bet}{2\eta}\nt{\za^{k} - \xx^*}^2 + \pd\cLy^{k}}
	}
    and a constant
    \eq{
        \rate = \min\dr{\frac{\gam}{4}, \pra, \frac{\bet}{1+\bet}, \frac{p}{2}, \tau, \frac{\cgap}{2}},
    }
    where $\pra$ is given by~\eqref{eq:defpra1} and $\cgap$ is given by~\eqref{eq:defjing1}.

	If the parameters and stepsize satisfy~\eqref{eq:cLyparastpun1},~\eqref{eq:cparastpdiffgF} and
	\subeqnum{
		L\gam \leq \frac{1}{4\eta},\ \frac{\bet}{2\eta} \leq \frac{\mu}{4}  \label{eq:celimstrcv1}  \\
		\frac{1}{\gam} - \frac{1 - \alp - \tau}{\alp} - \frac{1}{2} \leq 0 \label{eq:celimfxk1}  \\
		\frac{1 - \alp - \tau}{\alp} \leq \frac{1}{\gam} - \frac{1}{4}, \label{eq:cupbfyk1}
	}
	then $\cLya^k$ converges linearly with
	\eq{
		\cLya^{k+1} \leq \pr{1 - \rate}\cLya^k.
	}
	Specifically, we can choose
	\eql{\label{eq:cparastpex1}}{
		\tau = \frac{1}{2},\
		p = q = \frac{\cgap  }{60750},\
		\alp = \frac{1}{45\sqrt{\ka}},\
		\gam = \frac{4\alp}{4 - 4\tau - 3\alp},\
		\eta = \frac{\cgap\sqrt{\ka}}{50000  L},\
		\bet = \frac{\mu\eta}{2}.
	}
	In this case, to have $\cLya^K \leq \eps\cLya^0$,
	the gradient computation complexity is $O\pr{\sqrt{\ka}\log\frac{1}{\eps}}$
	and the communication complexity is $O\pr{\sqrt{\frac{\ka}{\gap}}\log\frac{1}{\eps} }$.
	
\end{theorem}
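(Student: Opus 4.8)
The plan is to mirror the proof of Theorem~\ref{thm:SSGT1} almost verbatim, exploiting the fact that the average-part dynamics of $\OGT$ coincide with those of $\SSGT$. Indeed, Lemma~\ref{lem:caevol1} shows that \eqref{eq:caupdt1} is identical to \eqref{eq:aupdt1}, so the three inner-product estimates of Lemma~\ref{lem:dajrsupp}, the descent bound \eqref{eq:LyU} of Lemma~\ref{lem:LyU}, and the gradient-difference bound \eqref{eq:diffgF} all carry over unchanged, since they involve only the averaged iterates $\xa^k, \ya^k, \za^k, \ua^k, \ga^k, \da^k$. The only genuinely new ingredient is the control of the consensus errors, now supplied by the \lCA\ machinery: Lemma~\ref{lem:cconbd1} plays the role of Lemma~\ref{lem:conbd1} and Lemma~\ref{lem:ccratesupp2} plays the role of Lemma~\ref{lem:ratesupp2}, with $\cLy^k$ in place of $\Ly^k$ and $\cgap$ in place of $\gap$ in all consensus-related terms. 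Because the surrogate sequences $\cg^k, \cu^k, \cz^k$ of Lemma~\ref{lem:qtorc1} are deterministic majorants of the expected consensus errors and hence do not telescope under conditioning, the entire argument is run in full expectation rather than conditional expectation.

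Concretely, I would start from \eqref{eq:inextmu}, take full expectation, and split $\jr{\da^k, \xa^k - \xx^*}$ as $\jr{\da^k, \xa^k - \za^k} + \jr{\ga^k + \zeta^k(\da^k - \ga^k), \za^k - \za^{k+1}} + \jr{\ga^k + \zeta^k(\da^k - \ga^k), \za^{k+1} - \xx^*}$, exactly as in the proof of Theorem~\ref{thm:SSGT1} (using $\E{\zeta^k}=1$). Substituting the three bounds of Lemma~\ref{lem:dajrsupp} and invoking \eqref{eq:celimstrcv1} to eliminate the $\nt{\za^k - \za^{k+1}}^2$ term and to absorb the residual $\nt{\xa^k - \xx^*}^2$ term into the $-\frac{\mu}{4}$ strong-convexity penalty yields an inequality whose right-hand side carries the consensus penalty $\pc\mt{\Con\XX^k}^2 + \frac{4\eta}{qn}\mt{\gF{\XX^k} - \gF{\QQ^k}}^2$ together with the term $\frac{\tau}{\alp}(f(\ua^k) - f(\xa^k) - \jr{\da^k, \ua^k - \xa^k})$. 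Applying \eqref{eq:cLydiffgF1} absorbs the consensus penalty into the telescoping difference $\pd((1 - \min\{p/2, \tau, \cgap/2\})\cLy^k - \cLy^{k+1})$, while \eqref{eq:LyU} disposes of $f(\ua^k) - f(\xa^k)$ at the cost of $\frac{1}{2}(f(\xa^k) - f(\xx^*))$ and a $\LyU$ telescope. Collecting the $f(\ya^k)$, $\LyU^k$, $\nt{\za^k - \xx^*}^2$, and $\cLy^k$ terms, and using \eqref{eq:celimfxk1} to kill the residual $(f(\xa^k) - f(\xx^*))$ coefficient together with \eqref{eq:cupbfyk1} to bound the $f(\ya^k)$ coefficient by $\frac{1}{\gam} - \frac{1}{4}$, gives precisely $\cLya^{k+1} \leq (1 - \rate)\cLya^k$.

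For the complexities, I would verify that the explicit choice \eqref{eq:cparastpex1} satisfies \eqref{eq:cLyparastpun1}, \eqref{eq:cparastpdiffgF}, and \eqref{eq:celimstrcv1}--\eqref{eq:cupbfyk1}; this is the routine-but-delicate constant chase, made tractable by the scaling $p, q = O(\cgap)$, $\alp, \gam = O(1/\sqrt{\ka})$, $\eta = O(\cgap\sqrt{\ka}/L)$, $\bet = O(\cgap/\sqrt{\ka})$, under which both sides of every required inequality share the same order and only the powers of the small parameters differ, so that $p$ and $\eta$ can be tuned last. Since $\cgap = O(\sqrt{\gap})$ by \eqref{eq:defjing1}, the rate becomes $\rate = O(\cgap/\sqrt{\ka}) = O(\sqrt{\gap/\ka})$, so $K = O(\sqrt{\ka/\gap}\log\frac{1}{\eps})$ iterations suffice for $\cLya^K \leq \eps\cLya^0$. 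Each iteration performs $O(1)$ communication rounds (three length-$d$ vectors per agent, by Remark~\ref{rem:senvecnumOptGT1}), giving communication complexity $O(\sqrt{\ka/\gap}\log\frac{1}{\eps})$; gradients are computed only when $\xi^k \neq 0$ or $\zeta^k \neq 0$, i.e.\ with probability $O(p + q) = O(\cgap) = O(\sqrt{\gap})$ per iteration, so the expected gradient-computation complexity is $O(\sqrt{\gap}) \cdot O(\sqrt{\ka/\gap}\log\frac{1}{\eps}) = O(\sqrt{\ka}\log\frac{1}{\eps})$.

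The main obstacle is not this final assembly, which is a faithful transcription of the $\SSGT$ argument, but the fact that the consensus errors no longer obey a clean one-step Q-linear recursion: the augmented matrix $\Wt$ may have $\nb{\acon\Wt} \geq 1$, so the R-linear behaviour only emerges after summing the bound \eqref{eq:Chebyssp1} of Lemma~\ref{lem:lcal1} over the full history, which is exactly what Lemma~\ref{lem:qtorc1} and Lemma~\ref{lem:cconbd1} encapsulate. The subtlety I would watch most carefully is therefore the consistent use of full expectation when combining \eqref{eq:cLydiffgF1} (stated in expectation) with the pointwise inner-product identities, and the verification that the $\cgap$-dependent conditions in \eqref{eq:cLyparastpun1} and \eqref{eq:cparastpdiffgF}, which now carry the extra constant $\pe = 14$ from Lemma~\ref{lem:lcal1}, still hold for the stated constants.
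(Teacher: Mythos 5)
Your proposal is correct and follows essentially the same route as the paper's own proof: the paper likewise reuses Lemma~\ref{lem:dajrsupp}, \eqref{eq:LyU}, and the identity of \eqref{eq:caupdt1} with \eqref{eq:aupdt1}, replaces Lemmas~\ref{lem:conbd1} and~\ref{lem:ratesupp2} by their LCA counterparts (Lemmas~\ref{lem:cconbd1} and~\ref{lem:ccratesupp2}), runs the assembly in full expectation, and obtains the complexities from $\rate = O\pr{\cgap/\sqrt{\ka}}$, $\cgap = O\pr{\sqrt{\gap}}$, and the expected per-iteration gradient cost $O\pr{p+q} = O\pr{\cgap}$. Your remarks on the necessity of full (rather than conditional) expectation and on the constant $\pe$ entering \eqref{eq:cLyparastpun1} and \eqref{eq:cparastpdiffgF} accurately reflect how the paper's argument is organized.
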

\begin{proof}
	Since~\eqref{eq:caupdt1} is exactly the same with~\eqref{eq:aupdt1}, relation~\eqref{eq:LyU} and Lemma~\ref{lem:dajrsupp} still hold here. And noting that the result of Lemma~\ref{lem:ccratesupp2} has almost the same form as that in Lemma~\ref{lem:ratesupp2}, we can derive the following inequality by almost the same arguments as those for deriving~\eqref{eq:subst1}.
	
	\eq{
		&\E{f\pr{\xa^k} - f\pr{\xx^*}} \\
		\leq& \E{\frac{1 - \alp - \tau}{\alp}\pr{f\pr{\ya^k} - f\pr{\xa^k}}  + \frac{1}{\gam}\pr{f\pr{\xa^k} - f\pr{\ya^{k+1}}} + \frac{1}{2}\pr{f\pr{\xa^k} - f\pr{\xx^*}}}  \\
		& + \E{\pr{1 - \pra}\LyU^k - \LyU^{k+1}}
		+ \E{\frac{1}{2\eta}\nt{\za^k - \xx^*}^2 - \frac{1 + \bet}{2\eta}{\nt{\za^{k+1} - \xx^*}^2}}  \\
		& + \pd\pr{\pr{1 - \min\dr{\frac{p}{2}, \tau, \frac{\cgap}{2}}}\E{\cLy^k - {\cLy^{k+1}}}}.
	}
	By rearranging the above inequality, we have
	\eq{
		&\cLya^{k+1} = \E{\frac{1}{\gam}\pr{f\pr{\ya^{k+1}} - f\pr{\xx^*}} + \LyU^{k+1} + \frac{1+\bet}{2\eta}\nt{\za^{k+1} - \xx^*}^2 + \pd\cLy^{k+1}    }  \\
		\leq& \E{\frac{1 - \alp - \tau}{\alp}\pr{f\pr{\ya^k} - f\pr{\xx^*}} + \pr{\frac{1}{\gam} - \frac{1 - \alp - \tau}{\alp} - \frac{1}{2}}\pr{f\pr{\xa^k} - f\pr{\xx^*}}}  \\
        & + \E{\pr{1 - \pra}\LyU^k + \frac{1}{2\eta}\nt{\za^k - \xx^*}^2 + \pd\pr{1 - \min\dr{\frac{p}{2}, \tau, \frac{\cgap}{2}}}\cLy^k}  \\
		\comleq{\eqref{eq:celimfxk1},\eqref{eq:cupbfyk1}}& \E{\pr{\frac{1}{\gam} - \frac{1}{4}}\pr{f\pr{\ya^k} - f\pr{\xx^*}} + \pr{1 - \pra}\LyU^k
			+ \frac{1}{2\eta}\nt{\za^k - \xx^*}^2} \\
            & + \pd\pr{1 - \min\dr{\frac{p}{2}, \tau, \frac{\cgap}{2}}}\E{\cLy^k}  \\
		\leq& \pr{1 - \rate}\E{{\frac{1}{\gam}\pr{f\pr{\ya^{k}} - f\pr{\xx^*}} + \LyU^{k} + \frac{1+\bet}{2\eta}\nt{\za^{k} - \xx^*}^2 + \pd\cLy^{k}}} \\
		=& \pr{1 - \rate}\cLya^k.
	}
	
	When we choose the parameters and stepsize as in~\eqref{eq:cparastpex1}, $\rate = O\pr{\frac{\cgap}{\sqrt{\ka}}}$.
	By the definition of $\cgap$ in~\eqref{eq:defjing1}, $\cgap = O\pr{\sqrt{\gap}}$.
	Then, the communication complexity is $O\pr{\frac{\sqrt{\ka}}{\cgap}\log\frac{1}{\eps}} = O\pr{\sqrt{\frac{\ka}{\gap}}\log\frac{1}{\eps}}$.
	For each communication round, in expectation, at most ${p + q} = O\pr{\cgap}$ gradient computations are implemented.
	Therefore, the gradient computation complexity is $O\pr{\frac{\sqrt{\ka}}{\cgap}\log\frac{1}{\eps}} \cdot O\pr{\cgap} = O\pr{\sqrt{\ka}\log\frac{1}{\eps}}$.
\end{proof}

\begin{remark}
	Following a similar way as what we did in Section~\ref{sec:OGT1}, many previous GT-based methods, including the classical gradient tracking, acc-DNGD-SC~\cite{qu2019accelerated}, and Acc-GT~\cite{li2021accelerated},  can be combined with \lCA\ and have better dependence on the graph condition number $\frac{1}{\gap}$.
	We omit the detailed discussion here.
	
\end{remark}

\newcommand\degree{{\rm deg}}
\def\WWt#1{\WW^{\pr{#1 }}}
\section{Numerical Experiments}\label{sec:numerical1}
In this section, we verify the numerical efficiency of $\OGT$ by comparing it with two single-loop accelerated methods APAPC~\cite{kovalev2020optimal} and Acc-GT~\cite{li2021accelerated} and a double-loop and optimal algorithm OPAPC~\cite{kovalev2020optimal} on the following $\ell^2$-penalized logistic regression problem:
\eq{
    f\pr{\xx} = \sum_{i\in \calN} f_i\pr{\xx} = \frac{1}{n}\sum_{i\in \calN} \pr{\log\pr{1 + \exp\pr{-y_i\zz_i\tp\xx}} + \frac{\mu\nt{\xx}^2}{2}},
}
where $\zz_i\in \Real^4$ is the feature vector and $y_i\in \dr{-1, +1}$ is the label.
Each private objective function $f_i\pr{\xx}$ is $\mu$-strongly convex.
Here, we set $n = 200$ and $\mu = 0.01$.
The training vector of each agent $i$ is chosen from
Banknote Authentication Data Set in UCI Machine Learning Repository~\cite{misc_banknote_authentication_267}\footnote{https://archive.ics.uci.edu/ml/datasets/banknote+authentication}
randomly without replacement.

We test the performance of the algorithms on $2$ different communication networks.
The first one is an $n$-cycle whose edge set is given by $\calE = \dr{\pr{i, i+1}: 1\leq i\leq n-1}\cup \dr{\pr{n, 1}}$.
The gossip matrix $\WWt{1}$  is
\eq{
    \WWt{1}_{ij} =
    \left\{
    \begin{split}
        & \ \frac{1}{4},\quad \pr{i, j}\in \calE \\
        & \ \frac{1}{2},\quad i = j \\
        & \ 0,\quad \text{otherwise }
    \end{split}
    \right.
}
We consider such a gossip matrix since it has a small spectral gap $\gap \approx 0.00025 $ which can challenge the performance of the algorithms in the dependence on the graph condition number $\frac{1}{\gap}$.

The second network is a denser graph generated by adding $50$ edges to the $n$-cycle randomly.
Denote its edge set by $\calE^+$.
The degree of vertex $i$ in this denser graph is denoted by $\degree\pr{i}$.
The corresponding gossip matrix $\WWt{2}$ is generated from a ``lazy" version of the Metropolis weights:
\begin{alignat*}{2}
    \WWt{2}_{ij} =
    \begin{cases}
         \ \frac{1}{2\max\dr{\degree\pr{i}, \degree\pr{j}}},  \\
         \ 1 - \sum_{k\in [n]\backslash \dr{i} } \WWt{2}_{ik}, \\
         \ 0,  \\
    \end{cases}
    & \quad       &
    \begin{aligned}
        &  \pr{i, j}\in \calE^+ \\
        &  i = j  \\
        &  \text{otherwise }
    \end{aligned}
\end{alignat*}
The gossip matrix $\WWt{2}$ has a much larger spectral gap $\gap \approx 0.009 $ than $\WWt{1}$ due to the better graph connectivity.

%Note that compared with APAPC and Acc-GT, $\OGT$ has better dependence on the network condition number $\frac{1}{\gap}$. So, the gossip matrix with a smaller spectral gap may show the superiority of $\OGT$ more easily.
The parameters and the stepsize for $\OGT$ are chosen as follows:
(i) for the $n$-cycle, we set
$\alp = 0.02$, $\tau = 0.1$, $\gam = \frac{4\alp}{4 - 4\tau - 3\alp}$, $\eta = 0.05$, $\bet = \frac{\eta\mu}{2}$, $p = q = 0.1$ with the value $\ew \approx 0.978 $ computed from~\eqref{eq:defjing1};
(ii) for the denser graph, we set $\alp = 0.02$, $\tau = 0.1$, $\gam = \frac{4\alp}{4 - 4\tau - 3\alp}$, $\eta = 0.1$, $\bet = \frac{\eta\mu}{2}$, $p = q = 0.2$ and $\ew \approx 0.883 $ by~\eqref{eq:defjing1}.
We always let $\xi^k = q\zeta^k$ in both networks.

{
The parameters for APAPC are tuned to optimze its performance.
Specifically, the parameters are chosen as follows: (i) for the $n$-cycle, $\tau = 0.2$, $\eta = 0.02$, $\theta = 40$; (ii) for the denser graph, $\tau = 0.1$, $\eta = 0.11$, $\theta = 10$.
For Acc-GT, we take $\thek = \frac{\sqrt{\mu\alp}}{2}$ as proposed in~\cite{li2021accelerated}.
Then, we hand-tune the value of $\alp$ to optimize its performance with $\alp=0.0001$ for the $n$-cycle and $\alp = 0.0004$ for the denser graph.
    For OPAPC, the number of iterations in the inner loops of Chebyshev acceleration are chosen as $T_0 = \floor{1/\sqrt{\gap}}$ as recommended by the classical implementation of Chebyshev acceleration~\cite{scaman2017optimal}.
    Specifically, for the $n$-cycle, $T_0 = 63$; for the denser graph, $T_0 = 10$.
    We hand-tune the other parameters to optimize the performance of OPAPC.
    The parameters are as follows:
    (i) for the $n$-cycle, $\tau = 0.05$, $\eta = 0.3$, $\theta = 3$;
    (ii) for the denser graph, $\tau = 0.03$, $\eta = 0.3$, $\theta = 3$.
}

The performance of the different algorithms is illustrated in Fig.~\ref{fig:cp1},
where the $y$-axis represents the loss $\frac{1}{n}\sum_{i\in \calN} f\pr{\xx^K_i} - f\pr{\xx^*}$, and the $x$-axis represents the communication round (iteration number) or the number of gradient computations.
Since $\xx^*$ is unknown, we estimate $f\pr{\xx^*}$ by running the classical Nesterov's accelerated gradient descent for $20000$ iterations.

\begin{figure}[h]
\centering
\includegraphics[scale=0.9]{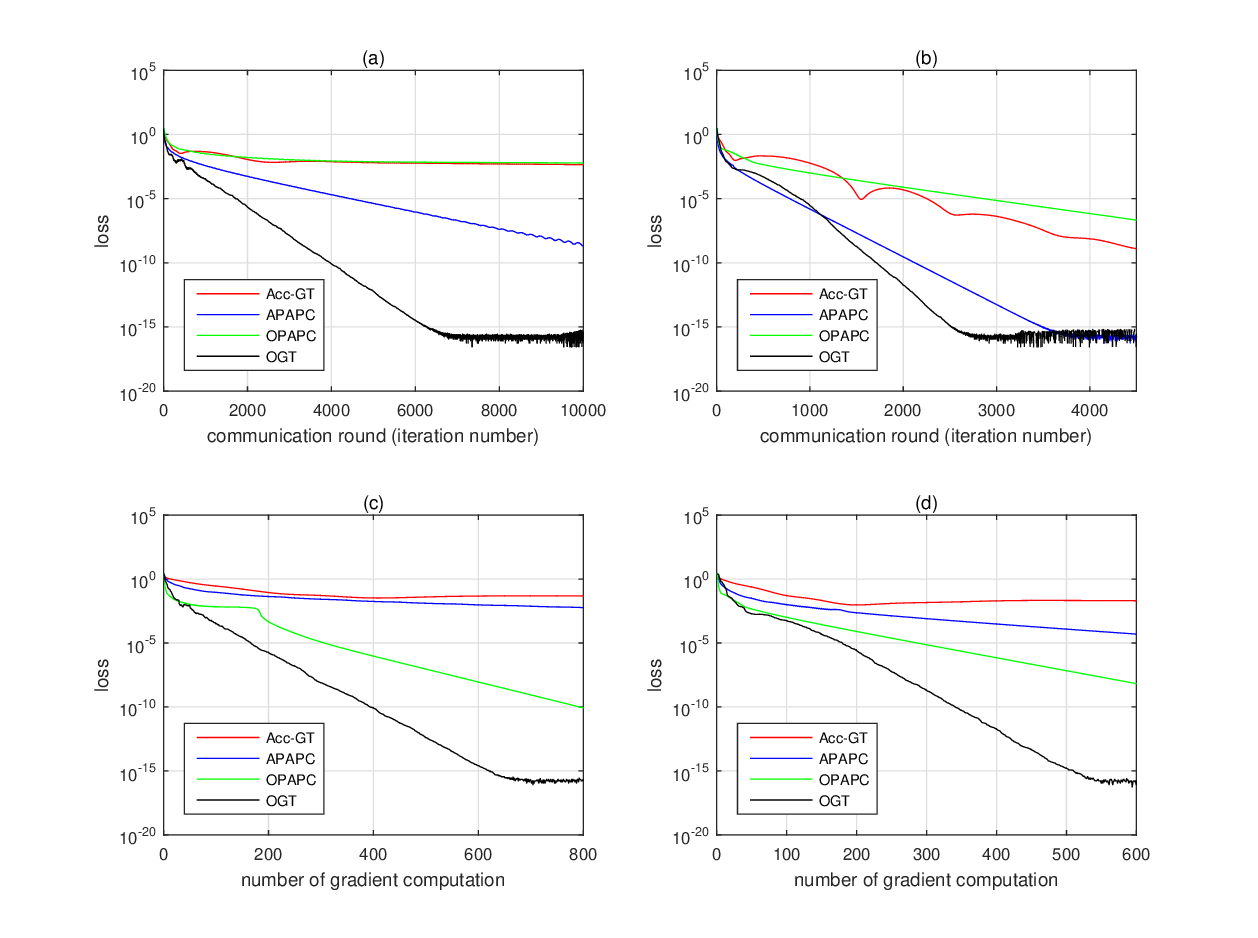}
\caption{Comparison of the performance of different algorithms with respect to the communication rounds (top) and gradient computation (bottom) on the $n$-cycle (left) and the denser graph (right).}\label{fig:cp1}
\centering
\end{figure}

We have some observations and remarks below:
\begin{itemize}
  \item $\OGT$ converges fast than APAPC and Acc-GT with respect to the communication round in both networks.
    This coincides with the better communication complexity of $\OGT$.
 When comparing Fig~\ref{fig:cp1}~(a)(b),
      the ratios of the convergence rates between $\OGT$ and the other methods are much larger for the $n$-cycle communication network than for the denser graph.
      This is expected since $\OGT$, APAPC, and Acc-GT all have $O\pr{\sqrt{\ka}}$ dependence on  the function condition number in the communication complexity, while $\OGT$ has better dependence on the graph condition number.
      Thus, the advantage of $\OGT$ becomes greater when the graph condition number is larger.

  \item The performance of $\OGT$ is much better than Acc-GT and APAPC when comparing with respect to the number of gradient computations.
      When $\OGT$ converges to a highly accurate solution with loss around $10^{-15}$, the losses of the other methods only slightly decrease.
      This is because when $p, q$ are set to be small, most of the iterations of $\OGT$ do not involve gradient computations.
      By comparison, APAPC and Acc-GT require one gradient computation at each iteration.

  {
  \item The convergence rate of OGT is much better than that of OPAPC, though they are both provably optimal.
  This is expected since OPAPC has an inner loop of Chebyshev acceleration while OGT is a single-loop algorithm.
  In addition, the gradient computation cost of OGT is also less than that of OPAPC.
  It is also worth noting that the gradient computation costs of OGT to reach an accurate solution with loss around $10^{-15}$ vary little between the $n$-cycle and the denser graph.
  This is expected since OGT has gradient computation complexity $O\pr{\sqrt{\ka}\log\frac{1}{\eps}}$ which remains invariant on different networks.
  }

\end{itemize}

\section{Conclusion}\label{sec:conclude1}
In this paper, we propose a novel gradient tracking method termed the \emph{\OGTnm}\ (\OGT) method for decentralized optimization.
	To our knowledge, $\OGT$ is the first single-loop method  that is optimal in both the gradient computation and the communication complexities in the class of gradient-type methods.
	To develop \OGT\ , we first design the \emph{\SSGTnm}\ (\SSGT) method which  has a very different scheme compared with previous GT-based methods.
 %and outperforms previous single-loop GT-based methods.
 %has the potential to be extended to more general settings. % including directed graphs, time-varying graphs and so on.
	Then, we develop the \lCA\ (LCA) technique to accelerate $\SSGT$, which leads to $\OGT$.
	The LCA technique can also be used to accelerate many other GT-based methods with respect to the graph condition number.

\bibliography{ref}

\newpage
\begin{appendices}

\section{Appendix}
\subsection{Implementation-friendly versions of $\SSGT$ and $\OGT$}\label{sec:ifDOGTandUOGT}
\begin{algorithm}
	\caption{ Implementation-friendly $\SSGT$    }
	\label{alg:DiOGT}
	
	\KwIn{parameters: $\alp, \bet, \gam, \tau \in (0, 1)$;
		probability: $p, q \in (0, 1)$;
		stepsize: $\eta > 0$;
		initial position: $\XX^0$;
		gossip matrix: $\WW$
	}
	
	\KwOut{$\XX^k$, $\YY^K$ or $\ZZ^K$    }
	
	Initialize $\YY^0 = \ZZ^0 = \UU^0  = \XX^0$, $\GG^0 = \MM^0 = \gF{\XX^0}$. \;
	
	\For{$k = 0$ to $K - 1$}{
%		Sample $\xi^k \sim
%		\begin{pmatrix}
%			0 & 1 \\
%			1 - p & p
%		\end{pmatrix}
%		$
%		and
%		$
%		\zeta^k \sim
%		\begin{pmatrix}
%			0 & \frac{1}{q} \\
%			1 - q & q
%		\end{pmatrix}
%		$. \;
        Sample $\xi^k\sim Bernoulli\pr{p}$, $\zeta^k\sim Bernoulli\pr{q}/q$. \;
		$\XX^{k} = \pr{1 - \alp- \tau}\YY^k + \alp\ZZ^k + \tau\UU^k.  $ \;
		\uIf{$\zeta^k == 0$}{
			$\ZZ^{k+1} = \pr{1 + \bet}\inv\WW\pr{\ZZ^k + \bet\XX^k - \eta\GG^k}.  $
		}\Else{
			$\ZZ^{k+1} = \pr{1 + \bet}\inv\WW\pr{\ZZ^k + \bet\XX^k - \eta\GG^k + \frac{\eta}{q}\pr{\MM^k - \gF{\XX^k}}}.    $
		}
		$\YY^{k+1} = \XX^k + \gam\pr{\ZZ^{k+1} - \ZZ^k}.  $ \;
		\uIf{$\xi^k == 0$}{
			\quad$\MM^{k+1} = \MM^k.  $  \;
			\quad$\UU^{k+1} = \WW\UU^k.  $ \;
			\quad$\GG^{k+1} = \WW\GG^k.  $
		}\Else{
			\quad$\MM^{k+1} = \gF{\XX^k}.  $ \;
			\quad$\UU^{k+1} = \WW\XX^k. $  \;
			\quad$\GG^{k+1} = \WW\GG^k + \MM^{k+1} - \MM^k.  $ \;
		}
	}
	Return $\XX^k$, $\YY^K$ or $\ZZ^K$.
\end{algorithm}

\newpage
\begin{algorithm}
	%\caption{\OGTnm\ (\OGT)  }
    \caption{Implementation-friendly $\OGT$  }
	\label{alg:UOGT}
	
	\KwIn{parameters: $\alp, \bet, \gam, \tau \in (0, 1)$;
		probability: $p, q\in (0, 1)$;
		stepsizes: $\eta > 0, \ew \in (0, 1)$;
		initial position: $\XX^0$;
		gossip matrix: $\WW$
	}
	
	\KwOut{$\XX^k$, $\YY^K$ or $\ub{\Zt^K}$  }
	
	Initialize: $\YY^0 = \XX^0 \in \MatSize{n}{d}$, $\Zt^0 = \Ut^0 = \CX^0 \in \MatSize{2n}{d}$, $\Gt^0 = \cgF{\XX^0} \in \MatSize{2n}{d}$, $\MM^0 = \gF{\XX^0} \in \MatSize{n}{d}$. \;
	\For{$k = 0$ to $K-1$  }{
%		Sample $\xi^k \sim
%		\begin{pmatrix}
%			0 & 1 \\
%			1 - p & p
%		\end{pmatrix}
%		$
%		and
%		$
%		\zeta^k \sim
%		\begin{pmatrix}
%			0 & \frac{1}{q} \\
%			1 - q & q
%		\end{pmatrix}
%		$. \;
        Sample $\xi^k\sim Bernoulli\pr{p}$, $\zeta^k\sim Bernoulli\pr{q}/q$. \;
		%$\XX^k = \pr{1 - \alp - \tau}\YY^k + \alp\ZZ^k + \tau\UU^k.  $ \;
		$\XX^k = \pr{1 - \alp - \tau}\YY^k + \alp\ub{\Zt^k} + \tau\ub{\Ut^k}.  $ \;
		\uIf{$\zeta^k == 0$}{
			$\Zt^{k+1} = \pr{1 + \bet}\inv\Wt\pr{\Zt^k + \bet\CX^k - \eta\CG^k}.  $
		}\Else{
			$\Zt^{k+1} = \pr{1 + \bet}\inv\Wt\pr{\Zt^k + \bet\CX^k - \eta\CG^k + \frac{\eta}{q  }\pr{\MM_{\ca}^k - \cgF{\XX^k}}}.  $
		}
		$\YY^{k+1} = \XX^k + \gam\pr{\ub{\Zt^{k+1}} - \ub{\Zt^k}}.  $  \;
		\uIf{$\xi^k == 0$}{
			$\MM^{k+1} = \MM^k.  $  \;
			$\Ut^{k+1} = \Wt\Ut^k.  $ \;
			$\Gt^{k+1} = \Wt\Gt^k.  $
		}\Else{
			$\MM^{k+1} = \gF{\XX^k}.  $  \;
			$\Ut^{k+1} = \Wt\CX^k.  $  \;
			$\Gt^{k+1} = \Wt\Gt^k + \MM_{\ca}^{k+1} - \MM_{\ca}^k.  $
		}
	}
	Return $\XX^k$, $\YY^K$ or $\ub{\Zt^K}$
	
\end{algorithm}

\newpage
\subsection{Proof of Lemma~\ref{lem:inextgt1}}\label{sec:prfleminextgt1}
  For any $i\in\calN$, in light of the convexity of $f_i$,  we have
  \eq{
    f_i\pr{\xx^k_i} \leq f_i\pr{\bb} + \jr{\na f_i\pr{\xx^k_i}, \xx^k_i - \bb}.
  }
  By the $L$-smoothness of $f_i$, we have
  \eq{
    f_i\pr{\aa} \leq f_i\pr{\xx^k_i} + \jr{\na f_i\pr{\xx^k_i}, \aa - \xx^k_i} + \frac{L}{2}\nt{\aa - \xx^k_i}^2.
  }
  Summing the above two equations and taking average over $i\in\calN$ yield
  \eq{
    f\pr{\aa} &\leq f\pr{\bb} + \jr{\da^k, \aa - \bb} + \frac{L}{2n}\mt{\XX^k - \one\aa}^2  \\
    & \leq f\pr{\bb} + \jr{\da^k, \aa - \bb} + \frac{L}{n}\mt{\XX^k - \one\xa^k}^2 + \frac{L}{n}\mt{\one\xa^k - \one\aa}^2  \\
    & = f\pr{\bb} + \jr{\da^k, \aa - \bb} + \frac{L}{n}\mt{\Con\XX^k}^2 + L\nt{\xa^k - \aa}^2.
  }
%  \eq{
%    f\pr{\aa} \leq f\pr{\bb} + \jr{\da^k, \aa - \bb} + \frac{L}{2n}\mt{\XX^k - \one\aa}.
%  }
  From the $\mu$-strong convexity of each $f_i$, we have
  \eq{
    f_i\pr{\xx^k_i} \leq f_i\pr{\xx^*} + \jr{\na f_i\pr{\xx^k_i}, \xx^k_i - \xx^*} - \frac{\mu}{2}\nt{\xx^k_i - \xx^*}^2.
  }
  In addition, by the $L$-smoothness of $f_i$, we obtain
  \eq{
    f_i\pr{\xa^k} \leq f_i\pr{\xx^k_i} + \jr{\na f_i\pr{\xx^k_i}, \xa^k - \xx^k_i} + \frac{L}{2}\nt{\xa^k - \xx^k_i}^2.
  }
  Summing the above two equations and taking average over $i\in\calN$ leads to
  \eq{
    f\pr{\xa^k} \leq& f\pr{\xx^*} + \jr{\da^k, \xa^k - \xx^*} - \frac{\mu}{2n}\mt{\XX^k - \one\xx^*}^2 + \frac{L}{2n}\mt{\XX^k - \one\xa^k}^2  \\
    \leq& f\pr{\xx^*} + \jr{\da^k, \xa^k - \xx^*} - \frac{\mu}{4n}\mt{\one\xa^k - \one\xx^*}^2 + \frac{\mu + L}{2n}\mt{\XX^k - \one\xa^k}^2  \\
    \leq& f\pr{\xx^*} + \jr{\da^k, \xa^k - \xx^*} - \frac{\mu}{4}\nt{\xa^k - \xx^*}^2 + \frac{L}{n}\mt{\Con\XX^k}^2,
  }
  where the second inequality comes from the elementary inequality: $-\nt{\aa+\bb}^2 \leq -\frac{1}{2}\nt{\aa}^2 + \nt{\bb}^2$, and the last inequality is from the fact that $\mu \leq L$.\qed

\subsection{Proof of Lemma~\ref{lem:gakm}}\label{sec:prfgakm1}
  Firstly, for $k = 0$, $\ga^0 = \frac{1}{n}\one\tp\gF{\QQ^0}. $
  Suppose we have shown relation~\eqref{eq:gakm} for $0, \cdots, k$, next we prove the equality for $k+1$.

  If $\xi^k = 0$, then $\QQ^{k+1} = \QQ^k. $
  By induction hypothesis and the fact $\one\tp\WW = \one\tp$,
  \eq{
    \ga^{k+1} = \ga^k = \frac{1}{n}\one\tp\gF{\QQ^k} = \frac{1}{n}\one\tp\gF{\QQ^{k+1}}.
  }
  If $\xi^k = 1$, then $\QQ^{k+1} = \XX^k. $
  Again, by induction hypothesis and the fact $\one\tp\WW = \one\tp$, we have
  \eq{
    \ga^{k+1} &= \ga^k + \frac{\one\tp\pr{\gF{\XX^k} - \gF{\QQ^k}} }{n} =  \frac{\one\tp\gF{\QQ^k}}{n} + \frac{\one\tp\pr{\gF{\XX^k} - \gF{\QQ^k}} }{n}  \\
      &= \frac{1}{n}\one\tp\gF{\XX^k} = \frac{1}{n}\one\tp\gF{\QQ^{k+1}}.
  }
  Then, relation~\eqref{eq:gakm} follows by induction.

\subsection{Proof of Lemma~\ref{lem:dajrsupp}}\label{sec:prflemdajrsupp}
\begin{itemize}
    \item We first prove~\eqref{eq:dajrxz}.  \\
  The equality~\eqref{eq:xa} can be rewritten as follows:
  \eq{
    \xa^k - \za^k = \frac{1 - \alp - \tau}{\alp}\pr{\ya^k - \xa^k} + \frac{\tau}{\alp}\pr{\ua^k - \xa^k}.
  }
  By setting $\aa = \xa^k$ and $\bb = \ya^k$ in~\eqref{eq:inextL}, we have
  \eq{
    \jr{\da^k, \ya^k - \xa^k} \leq f\pr{\ya^k} - f\pr{\xa^k} + \frac{L}{n}\mt{\Con\XX^k}^2.
  }
%  Analogously, by applying~\eqref{eq:inextL} with $\aa = \xa^k$ and $\bb = \ua^k$, we have
%  \eq{
%    \jr{\da^k, \ua^k - \xa^k} \leq f\pr{\ua^k} - f\pr{\xa^k} + \frac{L}{n}\mt{\Con\XX^k}^2.
%  }
  Thus, by combining the above equations, we have
  \eq{
    &\jr{\da^k, \xa^k - \za^k} = \frac{1 - \alp - \tau}{\alp}\jr{\da^k, \ya^k - \xa^k} + \frac{\tau}{\alp}\jr{\da^k, \ua^k - \xa^k}  \\
    \leq& \frac{1 - \alp - \tau}{\alp}\pr{f\pr{\ya^k} - f\pr{\xa^k}} + \frac{\tau}{\alp}\jr{\da^k, \ua^k - \xa^k} + \frac{\pr{1 - \alp - \tau}L}{\alp n}\mt{\Con\XX^k}^2.
  }

    \item Next, we prove~\eqref{eq:dajrz+1}.  \\
  Rearranging~\eqref{eq:ya} yields
  \eql{\label{eq:jrz+1supp1}}{
    \za^{k+1} - \za^k = \frac{1}{\gam}\pr{\ya^{k+1} - \xa^k}.
  }
  By setting $\aa = \ya^{k+1}$ and $\bb = \xa^k$ in~\eqref{eq:inextL}, we have
  \eql{\label{eq:jrz+1supp2}}{
    f\pr{\ya^{k+1}} \leq f\pr{\xa^k} + \jr{\da^k, \ya^{k+1} - \xa^k} + \frac{L}{n}\mt{\Con\XX^k}^2 + L\nt{\xa^k - \ya^{k+1}}^2.
  }
  By~\eqref{eq:gakm}, the term $\nt{\da^k - \ga^k}$ can be bounded as follows:
  \eql{\label{eq:dagak}}{
    \nt{\da^k - \ga^k}^2 &= \frac{1}{n^2}\nt{\one\tp\pr{\gF{\XX^k} - \gF{\QQ^k}}}^2 \leq \frac{\nt{\one}^2}{n^2}\mt{\gF{\XX^k} - \gF{\QQ^k}}^2  \\
    &\leq \frac{1}{n}\mt{\gF{\XX^k} - \gF{\QQ^k}}^2.
  }
  Thus,
  \eq{
    &\Ek{\jr{\ga^k + \zeta^k\pr{\da^k - \ga^k}, \za^k - \za^{k+1}}}  \\
    =& \Ek{\jr{\da^k, \za^k - \za^{k+1}} + \jr{\pr{\zeta^k - 1}\pr{\da^k - \ga^k}, \za^k - \za^{k+1}}}  \\
    \comeq{\eqref{eq:jrz+1supp1}}& \Ek{\frac{1}{\gam}\jr{\da^k, \xa^{k}   - \ya^{k+1}} + \jr{\pr{\zeta^k - 1}\pr{\da^k - \ga^k}, \za^k - \za^{k+1}}}  \\
    \comleq{\eqref{eq:jrz+1supp2}}& \frac{1}{\gam}\Ek{f\pr{\xa^k} - f\pr{\ya^{k+1}}} + \frac{L}{n\gam}\mt{\Con\XX^k}^2 + \frac{L}{\gamma}\Ek{\nt{\xa^k - \ya^{k+1}}^2}  \\
     & + \frac{1}{4\eta}\Ek{\nt{\za^k - \za^{k+1}}^2} + 4\eta\Ek{\pr{\zeta^k - 1}^2}\nt{\da^k - \ga^k}^2  \\
    \comeq{\eqref{eq:jrz+1supp1}}& \frac{1}{\gam}\Ek{f\pr{\xa^k} - f\pr{\ya^{k+1}}} + \frac{L}{n\gam}\mt{\Con\XX^k}^2 + \pr{L\gam + \frac{1}{4\eta}}\Ek{\nt{\za^k - \za^{k+1}}^2}  \\
     & + 4\eta\pr{\frac{1}{q} - 1}\nt{\da^k - \ga^k}^2  \\
    \comleq{\eqref{eq:dagak}}& \frac{1}{\gam}\Ek{f\pr{\xa^k} - f\pr{\ya^{k+1}}} + \frac{L}{n\gam}\mt{\Con\XX^k}^2 + \pr{L\gam + \frac{1}{4\eta}}\Ek{\nt{\za^k - \za^{k+1}}^2}  \\
    & + \frac{4\eta}{qn}\mt{\gF{\XX^k} - \gF{\QQ^k}}^2.
  }

    \item Finally, we prove~\eqref{eq:dajrzaxo1}.  \\
  The equality~\eqref{eq:za} can be rewritten as
  \eq{
    \eta\pr{\ga^k + \zeta^k\pr{\da^k - \ga^k}} = {\za^k - \za^{k+1}} + \bet\pr{\xa^k - \za^{k+1}}.
  }
  Then, we have
  \eq{
    &2\eta\jr{\ga^k + \zeta^k\pr{\da^k - \ga^k}, \za^{k+1} - \xx^*}  \\
    =& 2\pr{\jr{\za^k - \za^{k+1}, \za^{k+1} - \xx^*} + \bet\jr{\xa^k - \za^{k+1}, \za^{k+1} - \xx^*}}  \\
    =& \nt{\za^k - \xx^*}^2 - \nt{\za^{k+1} - \xx^*}^2 - \nt{\za^k - \za^{k+1}}^2  \\
    & + \bet\pr{\nt{\xa^k - \xx^*}^2 - \nt{\za^{k+1} - \xx^*}^2 - \nt{\xa^k - \za^{k+1}}^2}  \\
    \leq& \nt{\za^k - \xx^*}^2 - \pr{1 + \bet}\nt{\za^{k+1} - \xx^*}^2 + \bet\nt{\xa^k - \xx^*}^2 - \nt{\za^k - \za^{k+1}}^2.
  }

  \end{itemize}

\subsection{Proof of Lemma~\ref{lem:caevol1}}\label{sec:prflemcaevol1}
    Consider matrix $\AA\in \MatSize{2n}{d}$ of the following form:
    \eq{
        \AA =
        \begin{pmatrix}
          \BB  \\
          \CC
        \end{pmatrix},
    }
    where $\BB, \CC \in \MatSize{n}{d}$ with $\frac{1}{n}\one\tp\BB = \frac{1}{n}\one\tp\CC$.
    Then,
    \eql{\label{eq:Wta1}}{
        \frac{1}{2n}\one\tp\Wt\AA = \frac{1}{2n}\pr{\pr{2 + \ew}\one\tp\BB - \ew\one\tp\CC} = \frac{1}{n}\one\tp\BB = \frac{1}{2n}\one\tp\AA,
    }
    and
    \eql{\label{eq:Wtaub}}{
        \frac{1}{n}\one\tp\br{\Wt\AA}_{1:n,:} = \frac{1 + \ew}{n}\one\tp\BB - \frac{\ew}{n}\one\tp\CC = \frac{1}{n}\one\tp\BB = \frac{1}{2n}\one\tp\AA.
    }

    Next, we show by induction that $\ua^k = \uat^k$.
    Firstly, since $\Ut^0 = \CX^0$, we have $\ua^0 = \uat^0$.
    Suppose we have shown $\ua^{k} = \uat^{k}$, we derive the same relationship for $k + 1$.

    When $\xi^k = 1$, we have $\Ut^{k+1} = \CX^k$.
    Therefore, by~\eqref{eq:Wta1} and~\eqref{eq:Wtaub}, we know
    \eq{
        \uat^{k+1} = \frac{1}{2n}\one\tp\Wt\CX^k = \xa^k
    }
    and
    \eq{
        \ua^{k+1} = \frac{1}{n}\one\tp\br{\Wt\CX^k}_{1:n,:} = \xa^k.
    }

    When $\xi^k = 0$, by the induction hypothesis,
    \eq{
        \Ut^k =
        \begin{pmatrix}
            &\UU^k &  \\
            &\qquad \quad \br{\Ut^k}_{(n+1):2n, :} & \quad
        \end{pmatrix}
    }
    satisfies $\frac{1}{n}\one\tp\br{\Ut^k}_{(n+1):2n,:} = 2\uat^k - \ua^k = \ua^k = \frac{1}{n}\one\tp\UU^k$.
    Hence we can apply~\eqref{eq:Wta1},~\eqref{eq:Wtaub} to $\Wt\Ut^k$ and obtain
    \eq{
        \uat^{k+1} = \frac{1}{2n}\one\tp\Wt\Ut^k = \ua^k
    }
    and
    \eq{
        \ua^{k+1} = \frac{1}{n}\one\tp\br{\Wt\Ut^k}_{1:n,:} = \ua^k.
    }
    Thus, in both cases, we obtain $\uat^{k+1} = \ua^{k+1}$.
    By induction, we have $\uat^k = \ua^k$ for any $k \geq 0$.
    We have also shown in the above analysis that
    \eql{\label{eq:cua}}{
        \ua^{k+1} = \pr{1 - \xi^k}\ua^k + \xi^k\xa^k.
    }

    From similar arguments as the above ones, we have $\za^k = \zat^k$ $(\forall k \geq 0)$ and
    \eq{
        \za^{k+1} = \zat^{k+1} = \pr{1 + \bet}\inv\pr{\za^k + \bet\xa^k - \eta\ga^k + \zeta^k\eta\pr{\ga^k - \da^k}}.
    }

    Again from similar arguments, we have $\gat^k = \ga^k$ for any $k \geq 0$.
    Then, by considering similar arguments for deriving Lemma~\ref{lem:gakm} and the fact that $\frac{1}{2n}\one\tp\cgF{\XX^k} = \frac{1}{n}\one\tp\gF{\XX^k}$, we have
    \eq{
        \ga^k = \frac{1}{n}\one\tp\gF{\QQ^k},\ \forall k\geq 0.
    }

    By multiplying $\frac{\one\tp}{n}$ on both sides of~\eqref{eq:CX}, ~\eqref{eq:CY}, ~\eqref{eq:CQ}, we have
    \eql{\label{eq:suppmc1}}{
        \xa^{k+1} = \pr{1 - \alp - \tau}\ya^k + \alp\za^k + \tau\ua^k,\ \ya^{k+1} = \xa^k + \gam\pr{\za^{k+1} - \za^k},\ \qa^{k+1} = \pr{1 - \xi^k}\qa^k + \xi^k\xa^k.
    }

    Since $\qa^0 = \ua^0 = \xa^0$, it can be shown by~\eqref{eq:cua},~\eqref{eq:suppmc1} and induction that
    \eq{
        \qa^k = \ua^k,\ \forall k \geq 0.
    }

\end{appendices}

%\bibliographystyle{alpha}
%\bibliography{Ref}

\end{document}